\DeclareRobustCommand*\cal{\@fontswitch\relax\mathcal}
\newcommand{\totalSize}{N}
\newcommand{\tildeM}{\widetilde{M}}
\newcommand{\tildeGM}{\widetilde{GM}}
\newcommand{\blockE}{\mathsf{E}}
\newtheorem{nekDef}{Definition}
\newcommand{\dataset}{\mathcal{X}}
\newcommand{\outputs}{\mathcal{B}}
\newcommand{\parameter}{\mathcal{P}}
\newcommand{\nuisance}{\mathcal{N}}
\newcommand{\graphV}{V}
\newcommand{\graphE}{E}
\newcommand{\graphG}{G}
\newcommand{\vdmS}{S}
\newcommand{\vdmD}{D}
\newcommand{\vdmC}{C}
\newcommand{\vdmI}{I}
\newcommand{\NN}{\mathbb{N}}
\newcommand{\RR}{\mathbb{R}}
\newcommand{\CC}{\mathbb{C}}
\newcommand{\EE}{\mathbb{E}}
\renewcommand{\sgn}{\mbox{sgn}}
\newcommand{\ud}{\textup{d}}
\renewcommand{\argmin}{\operatornamewithlimits{argmin}}
\title{Graph connection Laplacian and random matrices with random blocks}
\author[M.~El Karoui]{Noureddine El Karoui}
\address{Department of Statistics, UC Berkeley, Berkeley, CA}
\email{nkaroui@berkeley.edu}
\author[H.-T.~Wu]{Hau-Tieng Wu}
\address{Department of Mathematics, University of Toronto, Toronto, Ontario, Canada M5S 2E4}
\email{hauwu@stanford.edu}
\begin{document}

\maketitle

\begin{abstract}
Graph connection Laplacian (GCL) is a modern data analysis technique that is starting to be applied for the analysis of high dimensional and massive datasets.
	Motivated by this technique, we study matrices that are akin to the ones appearing in the null case of GCL, i.e the case where there is no structure in the dataset under investigation. Developing this understanding is important in making sense of the output of the algorithms based on GCL. 
	We hence develop a theory explaining the behavior of the spectral distribution of a large class of random matrices, in particular random matrices with random block entries of fixed size. Part of the theory covers the case where there is significant dependence between the blocks. Numerical work shows that the agreement between our theoretical predictions and numerical simulations is generally very good.
\end{abstract}

\section{Introduction}

Graph connection Laplacian (henceforth GCL) \cite{singer_wu:2012,Bandeira_Singer_Spielman:2013,Chung_Zhao_Kempton:2013} is a new and promising data analysis framework for high dimensional and massive datasets. GCL and its variants are currently being used for the analysis of the cryo-Electron-microscope (cryoEM) problem \cite{singer_zhao_shkolnisky_hadani:2011,Hadani_Singer:2011a,singer_wu:2012,Tzeneva:2011}, dynamical systems analysis \cite{Sonday_Singer_Kevrekidis:2013}, sensor network localization \cite{Cucuringu_Singer_Cowburn:2012}, multi-view reconstruction \cite{Martinec_Pajdla:2007,Wang_Singer:2013}, vectorized PageRank \cite{Chung_Zhao_Kempton:2013}, ptychographic imaging problem \cite{Alexeev_Bandeira_Fickus_Mixon:2014,Marchesini_Tu_Wu:2014} and other problems. GCL is a conceptual and practical generalization of graph Laplacian (GL) methods, which are now fairly commonly applied in statistical and machine learning. The idea underlying these methods is that the data to be analyzed -- though high-dimensional in the form given to us (think of a high-resolution picture/image as a point in the high-dimensional Euclidean space) has in fact a relatively low-dimensional structure. An idealized model is that the data points actually live on a low-dimensional geometric object, for example, a manifold, embedded in a high-dimensional Euclidean space. This model can be understood as a generalization of the model considered in principal components analysis, where the data points are locally assumed to - approximately -  live on a low-dimensional affine space embedded in a high-dimensional Euclidean space. 

Under this low dimensional assumption, GL works by doing variants of kernel principal components analysis on data points. When the low-dimensional geometric object is a manifold, GL gives a way to estimate spectral properties of the heat kernel of its Laplace-Beltrami operator. It can be shown that a particular algorithm based on GL, the diffusion map (DM), is theoretically capable \cite{berard_besson_gallot:1994,berard2,coifman_lafon:2006,singer_wu:2013} to recover the geometrical and topological structure of this manifold. 

GCL works on more complicated objects compared with GL -- an extra group relationship between the data points is assumed in addition to the low dimensional geometric structure. 
Take the image data for example. Depending on the problem, in that setting, two rotated versions of the same image may be considered to be different or the same objects. In fact, while they appear very different in data-analytic methods simply operating on data point, such as GL, we might view them as a single object by taking the rotation into account. In other words, we ``group'' the dataset into subsets so that images in each subset are the same up to rotation. In GCL, the subsets are viewed as a new point cloud and the group relationship among the images are included in the analysis. One direct benefit of taking the rotation into account is dimension reduction of the dataset since the rotation information is taken into account. 
When the point cloud can be parametrized by a manifold and the group relationship between the data points encodes the parallel transport information of a chosen bundle, GCL-based methods allow us to estimate the heat kernel of the connection Laplacian associated with the chosen bundle (the natural and relevant differential-geometric object - see \cite[Chapter 1]{Berline_Getzler_Vergne:2004}) on the manifold. One particular algorithm based on GCL is  vector diffusion maps (VDM), which is a generalization of popular algorithms like Laplacian Eigenmap \cite{belkin_niyogi:2003,belkin_niyogi:2005,belkin_niyogi:2007}, DM \cite{coifman_lafon:2006,singer:2006}, etc,
and provides tools to understand the local geodesic distance on the manifold. Practically, GCL-based ideas can be algorithmically implemented efficiently. We give more numerical details on GCL later in the introduction. 

Though the motivation of GCL is linked to particular datasets \cite{singer_zhao_shkolnisky_hadani:2011,Hadani_Singer:2011a,singer_wu:2012}, GCL is naturally also interesting as a tool addressing problems arising in many modern aspects of statistical learning, applied mathematics and what is increasingly called data-science. A common target of analysis for these fields is big datasets, which are more and more prevalent. In addition to its size, the explosively increasing dimensionality of the data and the inevitable noise inside the data are two important features of modern datasets. To handle the high dimensionality of the dataset, it is common to assume the existence of low dimensional structure or sparsity inside the data, and design the analysis based on these assumptions. To deal with the noise in this {\it large $p$} (i.e many measurements per observation), {\it large $n$} (i.e many observations) setup, we have to take its peculiar and sometime counterintuitive behavior into account. It is therefore natural to seek to understand the impact of ``noise''  - broadly defined - on the behavior of our algorithms. As readers familiar with random matrix theory will know, the impact of noise in high-dimension can be dramatic (see e.g \cite{nekKernels} and \cite{nekInfoPlusNoiseKernelMatrices10}). This important issue is the focus of the current paper. As we will see, GCL gives rise to a specific kind of random matrices. We study generalizations of this kind of matrices and show that they have sometimes surprising properties. 

\subsection{On a framework leading to GCL}
In this subsection, we discuss a specific applied framework leading to the GCL to better motivate GCL-based algorithms. Estimating the intrinsic parameters from an {\it observation dataset} $\dataset$ is a main task in data analysis. We call the set of intrinsic parameters the {\it parameter space} $\parameter$. As discussed above, it is commonly believed that the parameter space has a lower dimensional structure. In many cases, this low dimensional structure attenuates various difficulties, for example reducing the impact of what is sometime called ``the curse of dimensionality". The {\it model space} associated with the parameter space, denoted as $\outputs$, is commonly assumed to be the range of a transformation from $\parameter$. In some cases, $\dataset$ is the same as $\outputs$, and we may be interested in inferring $\parameter$ from $\dataset$ for the sake of extracting more understanding about the system.
However, in some other cases there might be a gap between $\dataset$ and $\outputs$. Indeed, in addition to noise, $\outputs$ might be different from $\dataset$ due to the deformation introduced by the way we observe the system or other natural processes. We call the deformation {\it the nuisance parameter} and denote it as $\nuisance$. 

Consider the following example. Take a density function describing the finger and wrist of a child containing growth plates. We assume that the growth plates are parameterized by a set of parameters $v\in\parameter\subset\RR^d$, where $d\geq 1$. The same child is examined at several (say $T$) points in time. $\{v_i\}_{i=1}^T$ describe his/her growth plates at these different observations/experiment times. We denote the density function as $\psi_{v}:\RR^3\to \RR^+$ emphasizing the dependence on ${v}$. At the different time stamps, we take X-ray images of $\psi_{v}$ from a fixed rotational position $R_0\in SO(3)$ by the X-ray transformation, denoted as 
\begin{align}\label{definition:Xray_transform}
T_{\psi_{v}}(R_0)(x,y):=\int_{-\infty}^\infty \psi_v(xR_0^1+yR_0^2+tR_0^3)\ud t,\,\,\mbox{ where }R_0=\left[\begin{array}{ccc}
| & | & |\\
R_0^1&R_0^2&R_0^3\\
| & | & |
\end{array}
\right].
\end{align}
$(x,y)\in\RR^2$ and we call the unit vector $R_0^3$ the {\it projection direction}. We would like to study how the growth plates are parametrized by $\parameter$. In this problem, the model space is $\outputs=\{T_{\psi_{v}}(R_0);\,v\in \parameter\}$. However, the observation dataset $\dataset$ might be different from $\outputs$ since the child's hand might vary from time to time, that is, $\dataset=\{T_{\psi_{v}}(R(v)R_0);\,v\in\parameter,\,R(v)\in SO(3)\}$, where $R(v)$ is a random sample of $SO(3)$. In other words, the model space depends only on $\parameter$, while the observation dataset depend on not only on $\parameter$ but also on $SO(3)$. The extra parameters are the nuisance parameters describing how the patient rotates his hand, that is, $\nuisance=SO(3)$. 

In general, we may formulate the above framework through the concept of  {\it group action}. Consider a metric space $Y$ equipped with a metric $d$, and a group $G$ with the identity element $e$. We call $Y$ the total space and $G$ the structure group. The left group action of $G$ on $Y$ is a map from $G\times Y$ onto $Y$
\begin{align}
G\times Y\to Y,\quad (g,x)\mapsto g\circ x
\end{align}
so that $(gh)\circ x=g\circ (h\circ x)$ is satisfied for all $g,h\in G$ and $x\in Y$ and $e\circ x=x$ for all $x\in Y$. 
The right group action can be defined in the same way and can be constructed by composing the left group action with the inverse group operation, so it is sufficient to discuss left actions. Take the parameter space $\parameter$ and the model space $\outputs$. Suppose the observation dataset $\dataset$ is located in $Y$, the nuisance parameter is $G$ which acts on $Y$, and $\outputs=\dataset/G$. In other words, $\dataset$ is not only parameterized by $\parameter$, but also by $G$. Note that $\dataset=G\circ \outputs$ is a special case. In general, by the nature of the setup, the group action may be non-isometric, which corresponds to non-rigid deformations in the image registration literature.
From the data analysis viewpoint, removing these nuisance parameters is generally helpful, for example for dimension reduction and so on.

Besides the nuisance parameter, the underlying structure of $\parameter$ is important. In fact, even if $\nuisance=\emptyset$, or if we managed to remove $\nuisance$ from $\dataset$, the underlying structure of $\parameter$ might be informative. For example, in the X-ray transform (\ref{definition:Xray_transform}), the projection directions of all possible projection images are parametrized by the 2-dimensional sphere $S^2$, which contains rich geometric and topological structures. To take these non-trivial structures into account, {\it spectral methods} such as Laplacian Eigenmap or DM are commonly applied, and lots of successes have been reported. See, for example, \cite{belkin_niyogi:2003,belkin_niyogi:2005,coifman_lafon:2006,belkin_niyogi:2007} and the references therein. An additional benefit of spectral methods is that they are generally based on computationally efficient algorithms.

The importance of $\parameter$ and $\nuisance$ were discussed separately above. In some situations, the combination of $\parameter$ and $\nuisance$ might lead to further structural information about $\parameter$. One particular example is the {\it class averaging algorithm} aiming to improve the signal to noise ratio of the images collected from the cryoEM \cite{singer_zhao_shkolnisky_hadani:2011,Hadani_Singer:2011a,singer_wu:2012} so that the 3-dimensional structure of the molecule can be better reconstructed. To be more precise, the projection images of the X-ray transform are parameterized by $SO(3)$, while the parameter space of the class averaging algorithm is the projection direction $R_0^3 \in S^2$ embedded in $\mathbb{R}^3$ (see Equation \eqref{definition:Xray_transform}). Thus, under the above framework, if there is no (rotational) symmetry in the molecule described by $\psi$, $\dataset=T_\psi(SO(3))\cong SO(3)$, the nuisance parameter is $\nuisance=SO(2)$, the model space is $\outputs=T_\psi(SO(3))/SO(2)\cong S^2$ and $\parameter=S^2$ embedded in $\mathbb{R}^3$. Note that the information derived from the nontrivial combination of $\parameter$ and $\nuisance$ (rather than $\parameter$ alone), that is, $T_\psi(SO(3))$, allows us to obtain statistics describing non-trivial aspects of the geometric and topological structure of $\parameter$ by constructing the connection Laplacian of the tangent bundle of $\parameter=S^2$. As has been shown in \cite{singer_wu:2012}, the connection Laplacian is approximated by the GCL constructed from the dataset. The algorithm based on GCL which leads to the solution of the denoising problem in cryoEM - the class averaging algorithm - is VDM \cite{singer_wu:2012,wu:2012,singer_wu:2013}. 

\subsection{GCL: terminology and notations} \label{Section:Introduction:GCL}
We summarize the GCL algorithm considered in \cite{singer_wu:2012,singer_wu:2013} under the above framework, which motivates the block random matrix theory in this study. Suppose $\nuisance=O(m)$, where $m\in\NN$. Take a set of $n>0$ random samples from $\dataset$, denoted as $\dataset_n$, which corresponds to the finite random samples of the model space, denoted as $\outputs_n=\dataset_n/O(m)$. Construct a graph $\graphG=(\graphV,\graphE)$, where $\graphV$ represents $\outputs_n$, and build up an {\it affinity function} $w:\graphE\to \RR^+$ from the distance between pairs in $\outputs_n$. In addition to the affinity function $w$, build up a group-valued function $g:\graphE\to O(m)$, referred to as the {\it connection function}, quantifying the nuisance parameters among data on the vertex. Then, build up the $n\times n$ block matrix $S$ with $m\times m$ block as the weighted matrix, where the $(i,j)$-th entry of $S$ is:
\begin{align}\label{definition:vdmS}
\vdmS_{ij}=\left\{
\begin{array}{ll}
w(i,j)g(i,j) & \mbox{when }(i,j)\in E\\
0&\mbox{otherwise}
\end{array}\right.
\end{align}
and a $n\times n$ block diagonal matrix $D$ with the $i$-th diagonal block 
\begin{align}\label{definition:vdmD}
\vdmD_{ii}=\sum_{j\neq i}w(i,j)I_m.
\end{align}
The (normalized) GCL is defined as 
\begin{equation}\label{definition:vdmC}
	\vdmC:=\vdmI-\vdmD^{-1}\vdmS\;. 
\end{equation}
Analyzing the eigen-structure of the GCL leads to statistics describing $\parameter$, like the VDM and vector diffusion distance.

\subsection{Partial motivation for the paper: impact of noise on GCL}
Up to now, the discussion is based on the assumption that the observation dataset is noise free. When noise exists, we seek to understand how the noise influences the GCL, in particular in the large $p$, large $n$ setup. In general, if $Z_1,\ldots,Z_n\in\RR^p$ are i.i.d. random vectors, the problem we would study is formulated as a {\it kernel random matrix with random block structure} in the following way. For a group valued function $\mathsf{f}:\RR^p\times\RR^p\to {\mathcal G}$, where ${\mathcal G}$ is a matrix group, so that $\mathsf{f}(Z_i,Z_j)=\mathsf{f}(Z_j,Z_i)^*$, build up a symmetric matrix whose $(i,j)$-th block is
\[
S_{ij}=\mathsf{f}(Z_i,Z_j),
\]
where the statistical property of $Z_i$ and $\mathsf{f}$ depend on the application.
Note that when $\mathsf{f}$ has  range $\RR$ instead of ${\mathcal G}$ and $\mathsf{f}(Z_i,Z_j)=\mathsf{f}(Z_i^T Z_j)$, the problem turns into a kernel random matrix problem, which has been studied in \cite{nekKernels,Cheng_Singer:2013,Do_Vu:2013}. In the current paper, we are interested in how the noise influences the output of the GCL(-like) algorithm and consider the ``no signal'' situation as a ``null'' case situation and as an approximation of the very high noise situation. In a subsequent recent paper, we have considered the ``information plus noise" situation, see \cite{NEKHautieng2014CGLRobust}.

One particular motivation and application of the current work is the class averaging algorithm in the cryoEM problem. Due to the high noise nature of the problem, it is important to know how much confidence we have on the result by studying the null hypothesis that there is no signal in the data and only noise.
Although we do not focus on fully answering this question, in Section \ref{Section:ConsequenceOfMaster} and Section \ref{app:subsec:vdmComps}, the careful analysis of the GCL matrix motivated by this problem when all the signals are purely independent noise provides a clue. We mention that the GCL built up in this way is a block random matrix with additional dependent structure among the blocks introduced by the underlying low dimensional structure assumption and the way we prepare the data, and hence the more general random matrix theory is needed.  

The above motivating problem opens the following general question -- when a random matrix follows additional structures, does its empirical spectral distribution still converge to the semi-circle law? We are particularly interested in the case where the random matrix is a block matrix, such that each block is randomly sampled from a matrix group and there are some dependence relationship among blocks.

\subsection{Our contribution -- Random matrices with random blocks}

In light of the structure of the matrix $\vdmS$ appearing in GCL (see Equations \eqref{definition:vdmS}), it is natural to study random matrices whose entries are random blocks. A natural question is to understand the limiting spectral distribution (LSD) of $S$ and $D$ in that context. This is naturally a way for us to understand what the limiting spectrum of $\vdmC$ (see Equation \eqref{definition:vdmC}) should look like when our dataset is basically ``pure noise".

We first show that under a quite general condition, the limiting distribution of certain random matrices with random block entries is asymptotically deterministic (the central result in this direction is Theorem \ref{thm:veryGeneralMasterThm}). Indeed, we allow the block entries to have a significant amount of dependence. Furthermore, Theorem \ref{thm:veryGeneralMasterThm} applies more generally to non-block random matrices.

Next, we would like to quantify the deterministic limiting distribution. As a first approximation of this problem, we show that under the ``strip independent condition'', it is enough to understand a Gaussian counterpart to the matrix we are studying - this is the content of Theorem \ref{thm:replacementByGaussian}. 
As a second approximation of this problem, we develop a theory that characterizes the limiting distribution of random matrices with independent random blocks. As an application of this result, we get Theorem \ref{thm:cvToWignerUnderSimpleStructure}, which shows convergence to the Wigner semi-circle law for a very broad class of random block matrices. We \cite{NEKHolgerShrinkage11,nekCorrEllipD} and other researchers \cite{PajorPasturPub09,GoetzeTikhomirov05,ChatterjeeSimpleInvariance05,ChatterjeeGeneralizationLindeberg06} have used similar ideas or rather subset of these ideas in the past (at a high-level at least). Moreover, our results allow a fair amount of dependence between the block-entries of the random matrices we consider, in contrast to e.g \cite{GirkoMatrixEquationForBlockRandomMatrices95} which requires independent blocks.

In the GCL discussed above, the blocks are quite dependent since the random block matrix we consider has a ``kernel''-like structure. We give some more details on the ``null'' case - i.e $Z_i$'s are pure noise and contain no-signal -  when it is applied to the class averaging algorithm in the Appendix, where we show for instance that the marginal distribution of $g_{i,j}$ is the Haar distribution on $SO(2)$.

Of course, a large amount of work is still needed to tackle the particular problems we care about here. Tackling the spectral distribution of the matrix $C$  appearing in GCL for the simulations we considered requires a number of specialized computations - some of which depend on the specifics of certain algorithms etc and are not broadly informative. Hence, we do not carry out all these computations here and we study instead a broad class of related models. 
From our numerical work, it is clear that the results we get are relevant to the issues encountered in GCL and inform our thinking about this class of algorithms.

\subsection{Organization of the paper} The paper is organized as follows. In Section \ref{sec:theory}, we develop a theory to explain the behavior of the LSD of many random matrices, including random matrices with random block entries - a fair amount of dependence being allowed. We also present a number of situations 
that are intuitively close to the null case (i.e $Z_i$'s are pure noise) where our theory applies.  The analyses of the LSD under the independent strip structure condition and independent block condition are discussed in Sections 3 and 4. Section 4 also provides detailed examples and sufficient conditions for our results of Sections 2 and 3 to go through. In Section \ref{sec:numericalWork}, we present numerical work to investigate the agreement between our theoretical results and the results of numerical simulations. The Appendix contains a number of needed reminders, results and proofs. 

\textbf{Notations} We denote by $\opnorm{M}$ the largest singular value of the matrix $M$. We use the sign $\equalInLaw$ to denote equality in law.

\section{Theory}\label{sec:theory}
We consider $\totalSize\times \totalSize$ matrices that are Hermitian with above diagonal  ``block-rows" (or ``strips") of height bounded by a constant $d$. An example are 
matrices with i.i.d block entries but the theory we develop here applies more generally. We apply later this general theory to block matrices. There has been work on block matrices with various patterns, applying mostly to situations where the blocks are large, i.e their size is going to infinity asymptotically \cite{BollaRandomBlockMatricesLargeBlock04,OrabyHermitianRandomBlockMatrices07,slowFadingMIMOSytems08}, or to specific patterns (\cite{DetteRandomBlockMatrices} for a tridiagonal example). Our work extends and generalizes to much more involved dependence structures some results of Girko \cite{GirkoMatrixEquationForBlockRandomMatrices95}. 

Our analysis is based on Stieltjes transforms. Throughout, we call the Stieltjes transform of a $\totalSize\times \totalSize$ matrix $M$
$$
m_M(z):=\frac{1}{\totalSize}\trace{\left(M-z\id_{\totalSize}\right)^{-1}}\;,
$$
where $z\in\CC$ so that $\imag{z}=v>0$ and $\id_{\totalSize}$ is the $\totalSize\times \totalSize$ identity matrix. In much of our analysis, $d$ is held fixed. We will let $\totalSize$ grow to infinity.

\subsection{Asymptotically deterministic character of LSD}

We have the following ``master" theorem.
\begin{theorem}\label{thm:veryGeneralMasterThm}
Suppose that the $\totalSize\times \totalSize$ Hermitian matrix $M$ is such that, for independent random variables $\{Z_i\}_{i=1}^n$ and a matrix valued function $f$,
$$
M=f(Z_1,\ldots,Z_n)\;.
$$
Suppose further that for all $1\leq i \leq n$, there exists a matrix $N_i$ such that 
$$
N_i=f_i(Z_1,\ldots,Z_{i-1},Z_{i+1},\ldots,Z_n)
$$
and $\rank{M-N_i}\leq d_i$. Let $z\in \mathbb{C}^+$ and $\imag{z}=v>0$.
Then, for any $t>0$, 
\begin{equation}\label{eq:controlStieltjesTVeryGalMasterTheorem}
P\left(\left|m_M(z)-\Exp{m_M(z)}\right|>t\right)\leq C \exp\left(-c\frac{\totalSize^2 v^2 t^2}{\sum_{i=1}^n d_i^2}\right)\;,
\end{equation}
where $C$ and $c$ are two constants that do not depend on $n$ nor $d_i$'s.
\end{theorem}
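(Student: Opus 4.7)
The plan is to use a Doob martingale along the independent randomness $Z_1,\ldots,Z_n$, with the key bound on the martingale increments coming from the rank assumption through a classical Cauchy-interlacing estimate for Stieltjes transforms.

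First I would establish the deterministic lemma: if $A,B$ are $\totalSize\times\totalSize$ Hermitian matrices with $\rank{A-B}\leq r$ and $\imag{z}=v>0$, then
\[
|m_A(z)-m_B(z)|\leq \frac{\pi r}{\totalSize v}\,.
\]
The proof is standard: by Cauchy's interlacing, the empirical spectral distribution functions $F_A,F_B$ satisfy $\|F_A-F_B\|_\infty\leq r/\totalSize$, and an integration by parts in the representation $m_A(z)-m_B(z)=\int (x-z)^{-2}(F_A(x)-F_B(x))\,\ud x$ together with $\int |x-z|^{-2}\ud x=\pi/v$ gives the bound.

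Second, I would introduce the Doob filtration $\mathcal{F}_i=\sigma(Z_1,\ldots,Z_i)$ and the martingale $Y_i=\Exp{m_M(z)\mid \mathcal{F}_i}$, so that $m_M(z)-\Exp{m_M(z)}=\sum_{i=1}^n (Y_i-Y_{i-1})$. The auxiliary matrix $N_i$ does not involve $Z_i$, and since the $Z_j$ are independent, $\Exp{m_{N_i}(z)\mid\mathcal{F}_i}=\Exp{m_{N_i}(z)\mid\mathcal{F}_{i-1}}$. Hence
\[
Y_i-Y_{i-1}=\Exp{m_M(z)-m_{N_i}(z)\mid\mathcal{F}_i}-\Exp{m_M(z)-m_{N_i}(z)\mid\mathcal{F}_{i-1}}\,,
\]
and applying the deterministic lemma pointwise (with $r=d_i$) yields $|Y_i-Y_{i-1}|\leq 2\pi d_i/(\totalSize v)$ almost surely.

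Third, I would apply the Azuma--Hoeffding inequality separately to the real and imaginary parts of this complex martingale, each of whose increments is a.s.\ bounded by $2\pi d_i/(\totalSize v)$. Summing the squared bounds gives a variance proxy proportional to $\sum_i d_i^2/(\totalSize v)^2$, and a union bound over real and imaginary parts produces exactly an estimate of the form \eqref{eq:controlStieltjesTVeryGalMasterTheorem}, with absolute constants $C$ and $c$. I would expect the main (minor) obstacle to be the rank-perturbation Stieltjes lemma; everything else is bookkeeping. The crucial conceptual point is the use of $N_i$ as an anchor: without it, the naive bound on the martingale increment would require controlling $|m_M(z)|$ under a change of a single $Z_i$, which need not be Lipschitz in any useful way, whereas rank-perturbation control is automatic from the hypothesis.
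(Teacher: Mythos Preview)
Your proposal is correct and follows essentially the same route as the paper: Doob martingale along the filtration $\sigma(Z_1,\ldots,Z_i)$, using $N_i$ as an anchor so the increment reduces to a rank-$d_i$ perturbation of the Stieltjes transform, then Azuma--Hoeffding. The only (inessential) difference is the auxiliary rank-perturbation lemma: the paper obtains the bound $d_i/(\totalSize v)$ by iterating the Silverstein--Bai rank-one estimate, whereas you get $\pi d_i/(\totalSize v)$ via Cauchy interlacing and integration by parts; either route is fine since the theorem's constants are unspecified.
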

The previous theorem is a McDiarmid-style result for Stieltjes transforms - based on rank approximations. The conceptual approach is similar to the one we used in \cite{nekCorrEllipD}.

\begin{proof}
Let us call ${\cal F}_i=\sigma\left\{Z_k\right\}_{1\leq k \leq i}$ (i.e the $\sigma$-field generated by the random variables $Z_k$'s for $k\leq i$) and ${\cal F}_0=\left\{\emptyset\right\}$.
Of course, 
$$
m_M(z)-\Exp{m_M(z)}=\sum_{i=1}^n \Big[\Exp{m_M(z)|{\cal F}_{n-i+1}}-\Exp{m_M(z)|{\cal F}_{n-i}}\Big]\;.
$$
This is clearly a sum of martingale differences, by construction. Let us call $m_M^{(i)}(z)=\frac{1}{N}\trace{(N_{i}-z\id)^{-1}}$. Note that under our assumptions, $N_{i}$ is independent of $Z_i$, since it involves only $\{Z_k\}_{k\neq i}$. Therefore,
$$
\Exp{m_M^{(i)}(z)|{\cal F}_i}=\Exp{m_M^{(i)}(z)|{\cal F}_{i-1}}\;.
$$
Hence, 
\begin{align*}
\Exp{m_M(z)|{\cal F}_{n-i+1}}-\Exp{m_M(z)|{\cal F}_{n-i}}&=\Exp{m_M(z)-m_M^{(n-i+1)}(z)|{\cal F}_{n-i+1}}\\
&-\Exp{m_M(z)-m_M^{(n-i+1)}(z)|{\cal F}_{n-i}}\;.
\end{align*}

Our assumptions also guarantee that $M_{(i)}=M-N_i$ is of rank at most $d_i$. 
Lemma \ref{lemma:controlDiffStieltjesFiniteRankPerturb} in the Appendix gives 
\begin{align*}
\left|\Exp{m_M(z)-m_M^{(n-i+1)}(z)|{\cal F}_{n-i+1}}\right| \leq \frac{d_i}{\totalSize v} \;, \text{ and }
\left|\Exp{m_M(z)-m_M^{(n-i+1)}(z)|{\cal F}_{n-i}}\right| \leq \frac{d_i}{\totalSize v}\;.
\end{align*}
Therefore, 
$$
\left|\Exp{m_M(z)|{\cal F}_{n-i+1}}-\Exp{m_M(z)|{\cal F}_{n-i}}\right|\leq 2\frac{d_i}{\totalSize v}\;.
$$
Hence, $m_M(z)-\Exp{m_M(z)}$ is a sum of bounded martingale differences. Applying the Azuma-Hoeffding inequality (\cite{ledoux2001} and \cite{nekCorrEllipD} to deal with the details we have to handle here), we get that, for any $t>0$
$$
P\left(\left|m_M(z)-\Exp{m_M(z)}\right|>t\right)\leq C \exp\left(-c\frac{\totalSize^2 v^2 t^2}{\sum_{i=1}^n d_i^2}\right)\;,
$$
as announced in the Theorem. 
\end{proof}

Theorem \ref{thm:veryGeneralMasterThm} yields a simple proof of the following result that plays a central role in our work. 

\begin{theorem}\label{thm:controlStieltjesTMasterTheorem}
Suppose the $N\times N$ Hermitian matrix $M$ can be written
$$
M=\sum_{1\leq i,j \leq n} \Theta_{i,j}\;,
$$
where $\Theta_{i,j}=f_{i,j}(Z_i,Z_j)$ is a $N\times N$ matrix and the random variables $\{Z_i\}_{i=1}^n$ are independent. ($f_{i,j}$'s are simply matrix valued functions of our random variables.)  
Let $M_i$ be the Hermitian matrix 
$$
M_i=\Theta_{i,i}+\sum_{j\neq i} \left(\Theta_{i,j}+\Theta_{j,i}\right)\;.
$$
Assume that $\rank{M_i}\leq d_i$.  Let $z\in \mathbb{C}^+$ and $\imag{z}=v>0$.
Then, for any $t>0$, 
\begin{equation}\label{eq:controlStieltjesTMasterTheorem}
P\left(\left|m_M(z)-\Exp{m_M(z)}\right|>t\right)\leq C \exp\left(-c\frac{\totalSize^2 v^2 t^2}{\sum_{i=1}^n d_i^2}\right)\;,
\end{equation}
where $C$ and $c$ are two constants that do not depend on $\totalSize$, $n$ nor $d_i$'s. 
\end{theorem}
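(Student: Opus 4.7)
The plan is to obtain this theorem as an essentially immediate corollary of Theorem \ref{thm:veryGeneralMasterThm}. The key observation is to exhibit, for each $i$, a matrix $N_i$ that depends only on $\{Z_k\}_{k\neq i}$ and for which $M-N_i$ has rank at most $d_i$, which is exactly the setup required by the master theorem.

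Concretely, I would define for each $1\leq i\leq n$
$$
N_i \,:=\, \sum_{\substack{k\neq i \\ \ell\neq i}} \Theta_{k,\ell} \,=\, M - M_i.
$$
Since each $\Theta_{k,\ell}$ with $k,\ell\neq i$ is by hypothesis a function of $(Z_k,Z_\ell)$ only, the matrix $N_i$ is a function of $(Z_1,\ldots,Z_{i-1},Z_{i+1},\ldots,Z_n)$ alone, which is the required form $N_i=f_i(Z_1,\ldots,Z_{i-1},Z_{i+1},\ldots,Z_n)$. By construction $M-N_i=M_i$, and the hypothesis $\rank{M_i}\leq d_i$ directly supplies the rank bound. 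Plugging these choices into Theorem \ref{thm:veryGeneralMasterThm} then yields the concentration inequality \eqref{eq:controlStieltjesTMasterTheorem} exactly as stated, with the same constants $C$ and $c$.

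The only step that requires any thought is identifying this leave-one-out substitute: one must verify that the terms of $M$ that depend on $Z_i$ are precisely the diagonal contribution $\Theta_{i,i}$ together with the pairs $\Theta_{i,j}+\Theta_{j,i}$ for $j\neq i$ (using Hermiticity so that $M=\sum_{i,j}\Theta_{i,j}$ collects every ordered pair), and these are exactly the terms assembled into $M_i$. Hence $N_i=M-M_i$ is genuinely $Z_i$-free, and the rest is a one-line appeal to the master theorem — there is no real analytic obstacle here, as the heavy lifting (the martingale-difference argument based on Lemma \ref{lemma:controlDiffStieltjesFiniteRankPerturb} and Azuma–Hoeffding) has already been carried out in the proof of Theorem \ref{thm:veryGeneralMasterThm}.
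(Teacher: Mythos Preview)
Your proposal is correct and matches the paper's own proof essentially verbatim: the paper also sets $N_i=M-M_i$, observes that $N_i$ is a function of $\{Z_k\}_{k\neq i}$ alone, and then invokes Theorem~\ref{thm:veryGeneralMasterThm} directly using the rank hypothesis $\rank{M-N_i}=\rank{M_i}\leq d_i$.
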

\begin{proof}
Let us call 
$$
N_i=M-M_i\;.
$$
It is clear that $N_i=f_i(Z_1,\ldots,Z_{i-1},Z_{i+1},\ldots,Z_n)$. In other words, $N_i$ does not depend on $Z_i$. By our assumption on $\rank{M_i}=\rank{M-N_i}$, we see that the hypotheses made in Theorem \ref{thm:veryGeneralMasterThm} are satisfied in the context of Theorem \ref{thm:controlStieltjesTMasterTheorem}. Therefore, the conclusions of Theorem \ref{thm:veryGeneralMasterThm} apply here, too, and Theorem \ref{thm:controlStieltjesTMasterTheorem} is shown. 
\end{proof}

\subsubsection{Consequences of Theorem \ref{thm:controlStieltjesTMasterTheorem}}\label{Section:ConsequenceOfMaster}
The following consequences of Theorem \ref{thm:controlStieltjesTMasterTheorem} are tailored towards our applications to ``random-strip" matrices and GCL-like matrices.
\begin{corollary}\label{coro:controlStieltjesTSumOfIndepMatrices}
Suppose the $N\times N$ Hermitian matrix $M$ can be written
$$
M=\sum_{i=1}^n M_i\;,
$$
where $M_i$ are independent with $\rank{M_i}\leq d_i$. Let $z\in \mathbb{C}^+$ and $\imag{z}=v>0$.
Then, for any $t>0$, 
\begin{equation}\label{eq:controlStieltjesTSumOfIndepMatrices}
P\left(\left|m_M(z)-\Exp{m_M(z)}\right|>t\right)\leq C \exp\left(-c\frac{N^2 v^2 t^2}{\sum_{i=1}^n d_i^2}\right)\;,
\end{equation}
where $C$ and $c$ are two constants that do not depend on $n$ nor $d_i$'s. 
\end{corollary}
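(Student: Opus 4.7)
The plan is to derive this as an immediate specialization of Theorem \ref{thm:veryGeneralMasterThm} (or equivalently Theorem \ref{thm:controlStieltjesTMasterTheorem}), by treating the independent random matrices $M_i$ themselves as the independent ``random variables'' $Z_i$ in the master theorem. That is, set $Z_i := M_i$ and let $f(Z_1,\ldots,Z_n) := \sum_{i=1}^n Z_i$, so that $M = f(Z_1,\ldots,Z_n)$.

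To verify the rank hypothesis of Theorem \ref{thm:veryGeneralMasterThm}, I take the natural choice
$$
N_i := \sum_{j \neq i} M_j = M - M_i.
$$
By construction $N_i = f_i(Z_1,\ldots,Z_{i-1},Z_{i+1},\ldots,Z_n)$ is a function of the $Z_k$'s for $k \neq i$ only, and crucially, since the $M_j$'s are \emph{independent}, $N_i$ is independent of $Z_i = M_i$. Moreover, $M - N_i = M_i$, so $\rank(M - N_i) = \rank(M_i) \leq d_i$ by hypothesis. All the assumptions of Theorem \ref{thm:veryGeneralMasterThm} are then satisfied, and the conclusion \eqref{eq:controlStieltjesTVeryGalMasterTheorem} is precisely \eqref{eq:controlStieltjesTSumOfIndepMatrices}.

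Alternatively, one can route through Theorem \ref{thm:controlStieltjesTMasterTheorem} by setting $\Theta_{i,i} := M_i$ and $\Theta_{i,j} := 0$ for $i \neq j$; then the matrix called $M_i$ in that theorem is just our $M_i$, and the rank condition transfers verbatim. Either way, the constants $C,c$ are inherited from the master theorem and are independent of $n$, $\totalSize$, and the $d_i$'s.

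There is really no main obstacle here: the corollary is essentially a repackaging of the master theorem in which the role of the abstract independent inputs is played by the summand matrices themselves. The only point worth stressing in the write-up is the independence of $N_i$ from $Z_i = M_i$, which is what allows the martingale-difference telescoping in the proof of Theorem \ref{thm:veryGeneralMasterThm} to be applied without modification.
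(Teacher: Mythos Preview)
Your proposal is correct and matches the paper's own proof essentially verbatim: the paper applies Theorem \ref{thm:controlStieltjesTMasterTheorem} with $\Theta_{i,i}=M_i$, $\Theta_{i,j}=0$ for $i\neq j$, and $Z_i$ taken to be the (vectorized) entries of $M_i$. Your alternative route directly through Theorem \ref{thm:veryGeneralMasterThm} is equally valid and just as short.
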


\begin{proof}
The corollary is a simple consequence of Theorem \ref{thm:controlStieltjesTMasterTheorem}. Indeed, we can apply Theorem \ref{thm:controlStieltjesTMasterTheorem} with $M_i=\Theta_{i,i}$ and $\Theta_{i,j}=\Theta_{j,i}=0$ if $i\neq j$ to get Corollary \ref{coro:controlStieltjesTSumOfIndepMatrices}. The ``latent variable" $Z_i$ is simply in this case the vector of elements of $\Theta_{i,i}$. 
\end{proof}

As a simple consequence of the previous corollary, we have the following result which is important for the rest of the paper. 

\begin{theorem}\label{thm:concentrationOfStieltjesTransforms}
Suppose the Hermitian matrix $M$ has a ``strip'' structure, i.e it is composed of $n$ strips of size $d\times \totalSize$, where $\totalSize=nd$, and the portions of strips that are above the ($d\times d$ block-) diagonal are independent. Let $z\in \mathbb{C}^+$ with $\imag{z}=v>0$. 
Then, for constants $C$ and $c$ that do not depend on $n$, $d$ or our model, we have 
$$
\forall t>0\;, \; \; P(|m_M(z)-\Exp{m_M(z)}|>t)\leq C \exp(-c nv^2t^2)\;.
$$
In the case where $\Exp{m_M(z)}$ has a limit, the convergence of $m_M(z)$ (and hence the spectral distribution of $M$) is in the sense of a.s convergence.
\end{theorem}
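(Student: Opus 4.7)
The plan is to realize $M$ as a sum of $n$ independent low-rank Hermitian matrices and then invoke Corollary \ref{coro:controlStieltjesTSumOfIndepMatrices} directly. Concretely, I would write $M=\sum_{i=1}^n M_i$, where $M_i$ is the Hermitian matrix formed from the $i$-th strip together with its Hermitian conjugate reflected below the diagonal, with the convention that the $i$-th $d\times d$ diagonal block is counted exactly once (say, inside $M_i$). Because only the above-diagonal contents of each strip are random and these portions are independent across $i$ by hypothesis (the below-diagonal contents are determined by Hermitian symmetry), the matrices $M_1,\ldots,M_n$ are independent.

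Next I would establish the rank bound $\rank{M_i}\leq 2d$. This is a one-line observation: $M_i$ has non-zero entries only in the $d$ rows indexed by the $i$-th block or the $d$ columns indexed by the $i$-th block, so its column span lies in a subspace of dimension at most $2d$. Plugging $d_i=2d$ into Corollary \ref{coro:controlStieltjesTSumOfIndepMatrices} gives
$$
P\bigl(|m_M(z)-\Exp{m_M(z)}|>t\bigr)\leq C\exp\!\left(-c\,\frac{N^2 v^2 t^2}{4nd^2}\right),
$$
and since $N=nd$ the ratio $N^2/(4nd^2)$ equals $n/4$; absorbing the $1/4$ into the constant $c$ yields the announced $\exp(-c n v^2 t^2)$ bound.

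For the almost sure statement, the tail is summable in $n$ for any fixed $t,v>0$, so a standard Borel--Cantelli argument gives $m_M(z)-\Exp{m_M(z)}\to 0$ almost surely for each fixed $z\in\mathbb{C}^+$. If $\Exp{m_M(z)}$ converges pointwise on $\mathbb{C}^+$ to some function $m(z)$, then so does $m_M(z)$ almost surely; pointwise convergence of Stieltjes transforms on $\mathbb{C}^+$ to a Stieltjes transform is equivalent to weak convergence of the associated spectral measures, which delivers the a.s.\ convergence of the spectral distribution of $M$.

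I do not anticipate a substantive obstacle in this proof: the content is essentially rank-bookkeeping plus an application of Corollary \ref{coro:controlStieltjesTSumOfIndepMatrices}. The only point that requires care is the rank bound $\rank{M_i}\leq 2d$, which is immediate once one observes that $M_i$ has row-support and column-support each of size $d$. Everything else is a direct specialization of the general machinery already established.
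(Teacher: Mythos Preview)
Your proposal is correct and follows essentially the same approach as the paper: decompose $M$ as a sum of the $n$ Hermitian ``strip'' matrices $M_i$, observe $\rank{M_i}\leq 2d$, apply Corollary~\ref{coro:controlStieltjesTSumOfIndepMatrices}, and finish with Borel--Cantelli. You have in fact been slightly more careful than the paper's terse proof in handling the diagonal block (assigning it to $M_i$ so that the $M_i$ genuinely sum to $M$); the paper's own argument implicitly assumes this bookkeeping.
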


\begin{proof}
This theorem is a simple consequence of Corollary \ref{coro:controlStieltjesTSumOfIndepMatrices} where the matrix $M_i$ correspond to the element of the $i$-th strip that is above the diagonal and to the corresponding Hermitian transpose. Here $d_i\leq 2d$ for all $i$. The a.s. convergence of the spectral distribution is an immediate consequence the Borel-Cantelli lemma. See \cite{nekCorrEllipD} for details. 
\end{proof}

In some situations that are more complicated (for instance the GCL), we will need to be able to handle more dependent structures within the matrix $M$. The following corollary is relevant to those cases. It is targeted towards kernel-like structures. 

\begin{corollary}\label{coro:controlStieltjesLatentVariablesCaseBlockMatrices}
Suppose the Hermitian matrix $M$ is a $n\times n$ block matrix with square $\tilde{d}_i\times \tilde{d}_i$ blocks which can be written as
$$
M[i,j]=\mathsf{f}_{ij}(Z_i,Z_j)\;,
$$
where $Z_i$'s are independent random variables and $\mathsf{f}_{ij}$ are deterministic functions, but could depend on $i$ and $j$. Then Theorem \ref{thm:controlStieltjesTMasterTheorem} applies with $d_i=2\tilde{d}_i$. 
\end{corollary}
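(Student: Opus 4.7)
The plan is to realize $M$ in the additive form required by Theorem \ref{thm:controlStieltjesTMasterTheorem} and then read off the rank bound from the block geometry. For each ordered pair $(i,j)$, I would define $\Theta_{i,j}$ to be the $\totalSize\times \totalSize$ matrix that has $\mathsf{f}_{ij}(Z_i,Z_j)$ sitting in block position $(i,j)$ and zeros everywhere else. This is manifestly a function $f_{i,j}(Z_i,Z_j)$ of the two latent variables alone, as required, and the decomposition $M=\sum_{i,j}\Theta_{i,j}$ is immediate. The Hermitian character of $M$ amounts to $\mathsf{f}_{ji}(Z_j,Z_i)=\mathsf{f}_{ij}(Z_i,Z_j)^*$, so grouping off-diagonal contributions as $\Theta_{i,j}+\Theta_{j,i}$ produces Hermitian summands and creates no issue.

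Next, I would assemble the matrix
\[
M_i \;=\; \Theta_{i,i} + \sum_{j\neq i}\bigl(\Theta_{i,j}+\Theta_{j,i}\bigr)
\]
prescribed by Theorem \ref{thm:controlStieltjesTMasterTheorem} and observe that its support lies inside the union of the $i$-th block-row and the $i$-th block-column of $M$. Writing $M_i = A_i + B_i$, where $A_i$ collects the $i$-th block-row contribution (including the diagonal block) and $B_i$ collects the $i$-th block-column contribution with the diagonal block removed, $A_i$ has all of its non-zero entries in the $\tilde{d}_i$ rows indexed by block $i$, so $\rank{A_i}\leq \tilde{d}_i$, and $B_i$ has all of its non-zero entries in the $\tilde{d}_i$ columns indexed by block $i$, so $\rank{B_i}\leq \tilde{d}_i$. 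Subadditivity of rank then yields $\rank{M_i}\leq 2\tilde{d}_i$.

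This is precisely the hypothesis of Theorem \ref{thm:controlStieltjesTMasterTheorem} with $d_i = 2\tilde{d}_i$, and the conclusion of the corollary follows by direct invocation. There is essentially no obstacle: the entire substance of the corollary is the identification of the appropriate latent-variable decomposition together with the elementary row-or-column rank count sketched above.
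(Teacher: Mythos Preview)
Your proof is correct and follows essentially the same route as the paper: define $\Theta_{i,j}$ as the $N\times N$ matrix supported on block $(i,j)$, observe that $M_i$ is then supported on the $i$-th block row and column, and bound its rank by $2\tilde{d}_i$. The only cosmetic difference is that the paper bounds the rank by noting that a Hermitian matrix of the shape $\begin{pmatrix}A_{11}&A_{12}\\A_{21}&0\end{pmatrix}$ with $A_{11}$ of size $d\times d$ is built from at most $2d$ vectors, whereas you split $M_i$ into its row part and its column part and use subadditivity of rank; the two arguments are equivalent and equally elementary.
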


This type of kernel-like matrices is of particular interest to us - as GCL and its building blocks naturally give rise to such matrices.

\begin{proof}
In this situation, the $N\times N$ matrix $\Theta_{i,j}$ of Theorem \ref{thm:controlStieltjesTMasterTheorem} is simply the matrix consisting of 0's except in its $(i,j)$ block (corresponding to the matrix $M$'s $(i,j)$ block) where it is equal to $M[i,j]$.

Define $M_i$ as in Theorem \ref{thm:controlStieltjesTMasterTheorem}. All we have to do to show the validity of the corollary is therefore to verify that $\rank{M-M_{i}}\leq d_i=2\tilde{d}_i$. It is clear that $M-M_{i}$ is a matrix that contains only 0's except on its $i$-th block row and column.
Of course, if $A$ is a $N\times N$ Hermitian matrix of the form
$$
A=\begin{pmatrix}
A_{11} & A_{12}\\
A_{21} & 0_{(N-d)\times (N-d)}
\end{pmatrix}\;,
$$
then $\rank{A}\leq 2d$. Indeed, $A$ can be written by using at most 2$d$ vectors (and their transposed version). So any vector $v$ orthogonal to these 2$d$ vectors is such that $Av=0_N$. Hence, $\rank{M-M_{i}}\leq 2\tilde{d}_i=d_i$. We can therefore apply Theorem \ref{thm:controlStieltjesTMasterTheorem} under the hypotheses stated in our corollary. 
\end{proof}

\begin{remark}
Kernel random matrix analyses in \cite{nekInfoPlusNoiseKernelMatrices10,nekKernels,Cheng_Singer:2013,Do_Vu:2013} are special cases of Corollary \ref{coro:controlStieltjesLatentVariablesCaseBlockMatrices}. Indeed, the entries of the kernel random matrix $A$ of size $n\times n$ are
\[
A(i,j)=f(Z_i^TZ_j),
\]
where $f$ is a real-valued function. The extra freedom considered in Corollary \ref{coro:controlStieltjesLatentVariablesCaseBlockMatrices}, that is, the fact that the group-valued function $\mathsf{f}_{ij}$ depends in an arbitrary manner on $Z_i$ and $Z_j$ and not only on $Z_i^TZ_j$, means that the computation of the LSD (if it exists) can be very complicated.  However, our Corollary \ref{coro:controlStieltjesLatentVariablesCaseBlockMatrices} shows the deterministic character of the spectral distribution in the large $n$ limit in great generality.
\end{remark}

\begin{remark}
In the class averaging algorithm in the cryo-EM problem, one needs to work with matrices with block-entries defined through
$$
g_{i,j}=\argmin_{g\in SO(2)}\norm{Z_i-g\circ Z_j}_2 \quad\text{and}\quad d_{i,j}^2=\min_{g\in SO(2)}\norm{Z_i-g\circ Z_j}_2^2\;.
$$
Corollary 2.2 clearly applies to matrices with block entries of the form $M[i,j]=f(d_{i,j}^2)g_{i,j}$, for $f$ a function from $\mathbb{R}$ to $\mathbb{R}$. Hence, it is a useful tool for developing an understanding of certain aspects of the class averaging algorithm.
\end{remark}

\begin{remark}
Let us call $M[i,i]$ the blocks on the diagonal of the block-diagonal of the matrix $M$. Let us call $M^{(0)}$ the matrix obtained by replacing the block diagonal entries of $M$ by $0_{d\times d}$ and leaving the other elements of $M$ intact. 
We first note that when $M$ is such that $\sup_{1\leq i\leq n} \opnorm{M[i,i]}=\lo_P(1)$, Weyl's inequality gives 
$\opnorm{M-M^{(0)}}=\lo_P(1)$. The spectral distributions of $M$ and $M^{(0)}$ are therefore the same in the large $N$ limit. So we will often assume that the block-diagonal of $M$ is 0 - and effectively work with $M^{(0)}$ - keeping in mind that this assumption can be removed at very low cost provided the block diagonal entries of $M$ do not grow too fast. ($M$ will eventually take the form $M=\mathsf{M}/\sqrt{N}$, where $\mathsf{M}$ has independent strips with distributions independent of $n$ (except for the size of the strips). So assuming that $\opnorm{M[i,i]}=\lo_P(1)$ will turn out to be rather minimal.)
\end{remark}

\section{Characterizing the limit: dependence on low-order moments}

In this section, we analyze the LSD of a given random block matrix under the ``independent strip structure'' condition. Note that it can be viewed as an approximation of the more general random block matrices, for example, the one considered in Corollary \ref{coro:controlStieltjesLatentVariablesCaseBlockMatrices}. 

Our main result in this section, Theorem \ref{thm:replacementByGaussian}, says that the general random matrices we consider can be understood by simply considering random matrices with Gaussian entries that have a covariance structure that match the low-order moments of the random matrices we consider.

Let $M_n$ be a $N\times N$ matrix, where $N=nd$. We write 
$$
M_n=\begin{pmatrix}
M_n(1)\\
M_n(2)\\
\vdots\\
M_n(n)
\end{pmatrix}\;,
\text{ where } M_n(i) \in \mathbb{R}^{d\times \totalSize}\;. 
$$
We refer to the matrices $M_n(i)'s$ as block rows or {\it strips}.
We further write the block row/strip $M_n(i)$ as 
$$
M_n(i)=\big(\underbrace{{\cal R}_n(i)}_{i\times d} \;\; \underbrace{{\cal M}_n(i)}_{(n-i)\times d}\big)\;, \text{ where }{\cal R}_n(i) \in \mathbb{R}^{d\times (id)} \;.
$$ 
${\cal M}_n(i)$ is the $d\times (n-i)d$ strip that is on the $i$-th block row above the block diagonal of $M_n$. 
We call 
$$
\tildeM_n(i)=\big(0_{d\times (id)}\;\;{\cal M}_n(i)\big)\;,
$$ 
which is a $d\times \totalSize$ matrix.

\paragraph{Assumption B1} 
Let $z\in\mathbb{C}^+$ and $v=\imag{z}>0$.
If $\Gamma$ is a real, symmetric, deterministic matrix, we assume that  
\begin{equation}\label{eq:assumptionB11}
\frac{1}{nd}\Exp{\opnorm{\tildeM_n(i) (\Gamma-z\id)^{-1} \tildeM_n(i)\trsp-\Exp{\tildeM_n(i) (\Gamma-z\id)^{-1} \tildeM_n(i)\trsp}}}\leq \frac{R_i}{v}\;,
\tag{Assumption-B1.1}
\end{equation}
where $R_i\in \mathbb{R}_+$ is independent of $\Gamma$.  
We also assume that $\tildeM_n(i)$ is such that there exists a function $K_i$ of $z$
such that 
\begin{equation}\label{eq:assumptionB12}
\opnorm{\frac{1}{nd}\Exp{\tildeM_n(i)(\Gamma -z\id)^{-2}\tildeM_n(i)\trsp}}\leq K_i(z)\;.
\tag{Assumption-B1.2}
\end{equation}
We assume that $K_i$ is bounded in $i$.  
We finally assume that 
\begin{equation}\label{eq:assumptionB13}
\frac{1}{nd}\Exp{\opnorm{\tildeM_n(i) (\Gamma-z\id)^{-2} \tildeM_n(i)\trsp-\Exp{\tildeM_n(i) (\Gamma-z\id)^{-2} \tildeM_n(i)\trsp}}}\leq \frac{R_i}{v^2}\;,
\tag{Assumption-B1.3}
\end{equation}
where $R_i\in \mathbb{R}_+$ is independent of $\Gamma$. \\

\paragraph{About \ref{eq:assumptionB11}} Note that the matrix $\tildeM_n(i) (\Gamma-z\id)^{-1} \tildeM_n(i)\trsp$ is $d\times d$ and $d$ is assumed to be fixed in our analysis. So if we call $v_{k,j}(i)$ the $(k,j)$ entry of 
$$
\frac{1}{nd}\left(\tildeM_n(i) (\Gamma-z\id)^{-1} \tildeM_n(i)\trsp-\Exp{\tildeM_n(i) (\Gamma-z\id)^{-1} \tildeM_n(i)\trsp}\right)\;,
$$ 
a simple way to check that \eqref{eq:assumptionB11} holds for models under consideration is to verify that
$\sup_{1\leq k,j\leq d}\Exp{|v_{k,j}(i)|}\leq R_i/v$; in this case \eqref{eq:assumptionB11} holds with $R_i$ replaced by $d R_i$, since the operator norm of a symmetric matrix is smaller than the maximum $l_1$ norm of its rows (\cite{hj}, p.313). 

\paragraph{About \ref{eq:assumptionB12}} We note that the assumption about $K_i$ is easily satisfied: for instance, if we assume that there exists a constant $C_i$ such that for any deterministic unit vector $u$, $\opnorm{\Exp{\tildeM_n(i)uu\trsp \tildeM_n(i)\trsp}}\leq C_i$, then after diagonalizing $\Gamma$, we see that, if $\imag{z}=v$,
$K_i(z)=C_i/v^2$ is a valid choice. 

We are now in position of stating our main theorem.
\begin{theorem}\label{thm:replacementByGaussian}
Let $M_n$ be an $n\times n$ Hermitian block matrix of size $d\times d$, with random block-rows/strips satisfying Assumption B1. Assume that $\Exp{M_n}=0$ and that its block diagonal is 0. Call $\mathsf{m}_n(z):=m_{M_n/\sqrt{nd}}(z)$, the Stieltjes transform of $M_n/\sqrt{nd}$. 

Let $GM_n$ be a block matrix with Gaussian random blocks, with mean 0. Call $\mathsf{gm}_n(z):=m_{GM_n/\sqrt{nd}}(z)$ and suppose that Assumption B1 is satisfied for it, too.

Call ${\cal M}_n(i)$ the $d\times (n-i)d$ random matrix corresponding to the $i$-th block row of $M_n$ above the diagonal. Call $\tildeM_n(i)=[0_{d\times (id)} {\cal M}_n(i)]$. Call ${\cal GM}_{n}(i)$ and $\tildeGM_n(i)$ the corresponding matrices for the matrix $GM_n$. Assume that the block rows/strips of $M_n$ and $GM_n$ above the diagonal (i.e ${\cal M}_n(i)$'s and ${\cal GM}_n(i)$'s in our notation) are independent.

Assume furthermore that for all $1\leq i \leq n$ and for any deterministic (unit) vector $u$, 
\begin{equation}\label{eq:keyRequirementLindeberg}
\Exp{\tildeM_n(i)uu\trsp \tildeM_n(i)\trsp}=\Exp{\tildeGM_n(i)uu\trsp \tildeGM_n(i)\trsp}\;.
\end{equation}
Then 
\begin{equation}\label{eq:approxMeanStieltjesT}
\left|\Exp{\mathsf{m}_n(z)-\mathsf{gm}_n(z)}\right|\leq \frac{1}{nd}\sum_{i=1}^n R_i g(z,K_i)\;,
\end{equation}
where $g(z,K_i)=2d(2+K_i(z))\frac{1}{v^3}$. 
In particular, if $\sum_i R_i/n\tendsto 0$ as $n\tendsto \infty$, the LSD of $M_n$ is the same as that of $GM_n$. 
\end{theorem}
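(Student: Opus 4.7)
The plan is a Lindeberg-style replacement, swapping the above-diagonal strips of $M_n$ for their Gaussian counterparts one at a time and controlling the change in the expected Stieltjes transform at each step. Introduce interpolating matrices $H_0,H_1,\dots,H_n$, where $H_k$ has its first $k$ above-diagonal strips drawn from $GM_n$ and its remaining $n-k$ drawn from $M_n$ (Hermitian extension below the diagonal, block diagonal zero); by the assumed strip independence each $H_k$ is a well-defined Hermitian matrix, and $H_0=M_n$, $H_n=GM_n$. Telescoping yields
\[
\Exp{\mathsf m_n(z)-\mathsf{gm}_n(z)}=\sum_{i=1}^{n}\Exp{\,m_{H_{i-1}/\sqrt N}(z)-m_{H_i/\sqrt N}(z)\,},
\]
and it suffices to bound the $i$-th summand by $R_i g(z,K_i)/N$.

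For the $i$-th swap, let $\Gamma_i$ be the common base obtained from either $H_{i-1}$ or $H_i$ by zeroing the $i$-th block row and column; $\Gamma_i$ is independent of the $i$-th strip of both models. With $e_i\in\mathbb R^{N\times d}$ the block coordinate matrix, $\tilde U_i:=[e_i,\ \tildeM_n(i)^T/\sqrt N]\in\mathbb R^{N\times 2d}$, and $J=\left(\begin{smallmatrix}0&I_d\\I_d&0\end{smallmatrix}\right)$, we have $(H_{i-1}-\Gamma_i)/\sqrt N=\tilde U_iJ\tilde U_i^T$, and similarly for $H_i$ with $\tilde U_i^{GM}$. Sherman--Morrison--Woodbury then gives the exact formula
\[
m_{H_{i-1}/\sqrt N}(z)-m_{\Gamma_i/\sqrt N}(z)=-\frac{1}{N}\tr\!\bigl[\tilde Q_i\,(J+\tilde P_i)^{-1}\bigr],
\]
where $R_i:=(\Gamma_i/\sqrt N-z\,\id)^{-1}$, $\tilde P_i:=\tilde U_i^{T}R_i\tilde U_i$, and $\tilde Q_i:=\tilde U_i^{T}R_i^{2}\tilde U_i$ are $2d\times 2d$ random matrices (analogously for $H_i$). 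Each per-strip difference is thus a smooth function of the finite-dimensional quadratic forms $(\tilde P_i,\tilde Q_i)$ once we condition on $\Gamma_i$.

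Conditioning on $\Gamma_i$ and using its spectral decomposition $\Gamma_i/\sqrt N=\sum_k\lambda_kv_kv_k^T$, the cross blocks of $\Exp{\tilde P_i|\Gamma_i}$ and $\Exp{\tilde Q_i|\Gamma_i}$ vanish by mean zero and independence, while the lower-right blocks equal $\tfrac{1}{N}\sum_k(\lambda_k-z)^{-1}\Exp{\tildeM_n(i)v_kv_k^{T}\tildeM_n(i)^{T}}$ and the analogue with $(\lambda_k-z)^{-2}$. Polarization of \eqref{eq:keyRequirementLindeberg} extends the matching of $\Exp{\tildeM_n(i)uu^{T}\tildeM_n(i)^{T}}$ to any symmetric $A$, so $\Exp{\tilde P_i|\Gamma_i}$ and $\Exp{\tilde Q_i|\Gamma_i}$ are identical for $M_n$ and $GM_n$. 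Assumption B1 supplies the needed concentration: \eqref{eq:assumptionB11} and \eqref{eq:assumptionB13} give
\[
\Exp{\|\tilde P_i-\Exp{\tilde P_i|\Gamma_i}\|_{\mathrm{op}}\,|\,\Gamma_i}\lesssim R_i/v,\qquad \Exp{\|\tilde Q_i-\Exp{\tilde Q_i|\Gamma_i}\|_{\mathrm{op}}\,|\,\Gamma_i}\lesssim R_i/v^2,
\]
while \eqref{eq:assumptionB12} yields $\|\Exp{\tilde Q_i|\Gamma_i}\|_{\mathrm{op}}\le K_i(z)$.

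Letting $F(P,Q):=\tr[Q(J+P)^{-1}]$ and $(\bar P,\bar Q):=(\Exp{\tilde P_i|\Gamma_i},\Exp{\tilde Q_i|\Gamma_i})$, the resolvent identity gives
\[
F(\tilde P,\tilde Q)-F(\bar P,\bar Q)=\tr[(\tilde Q-\bar Q)(J+\bar P)^{-1}]-\tr[\tilde Q(J+\tilde P)^{-1}(\tilde P-\bar P)(J+\bar P)^{-1}].
\]
Combining $|\tr(AB)|\le 2d\|A\|_{\mathrm{op}}\|B\|_{\mathrm{op}}$ on $2d\times 2d$ matrices with the dissipative bound $\|(J+P)^{-1}\|_{\mathrm{op}}\le 1/v$ (valid because $\mathrm{Im}(J+P)=v\,\tilde U^{T}|\Gamma_i/\sqrt N-z|^{-2}\tilde U$ is positive semidefinite) and the estimates above produces $\Exp{|F(\tilde P_i,\tilde Q_i)-F(\bar P,\bar Q)|\,|\,\Gamma_i}\le 2dR_i(1+K_i(z))/v^3$. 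Adding the analogous estimate on the Gaussian side -- whose matched value $F(\bar P,\bar Q)$ is identical for both models -- yields $|\Exp{m_{H_{i-1}/\sqrt N}(z)-m_{H_i/\sqrt N}(z)}|\le R_i g(z,K_i)/N$ with $g(z,K_i)=2d(2+K_i(z))/v^3$; summing over $i$ and using $N=nd$ delivers \eqref{eq:approxMeanStieltjesT}. The LSD claim then follows: if $\sum_iR_i/n\to 0$ the bound vanishes for each $z\in\mathbb C^+$, so the deterministic limits of $\mathsf m_n(z)$ and $\mathsf{gm}_n(z)$ supplied by Theorem~\ref{thm:concentrationOfStieltjesTransforms} must agree. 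The main technical obstacle is the uniform Lipschitz control of the Woodbury formula -- specifically bounding $\|(J+\tilde P_i)^{-1}\|$ when $\tilde U_i$ loses column rank -- which can be handled by an infinitesimal regularization of $\tilde U_i$ or, alternatively, by replacing Woodbury by a truncated second-order resolvent expansion and bounding the cubic remainder directly via Assumption B1.
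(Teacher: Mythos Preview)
Your overall strategy is the same as the paper's: Lindeberg interpolation over the above-diagonal strips, a low-rank resolvent perturbation formula at each step, and the observation that the deterministic ``centered'' value depends only on the second-moment data matched by \eqref{eq:keyRequirementLindeberg}. The paper packages the per-strip step as Lemma~\ref{lemma:detApproxBlockStieltjes} and Theorem~\ref{thm:replaceFirstBlockRow}, treating the $d\times d$ block $T_n^{11}(z)$ and the Woodbury correction to $T_n^{22}(z)$ separately rather than via a single $2d\times 2d$ Woodbury identity; otherwise the arguments are parallel.

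There is, however, a genuine gap in your implementation. The claim $\|(J+\tilde P_i)^{-1}\|_{\mathrm{op}}\le 1/v$ does not follow from $\mathrm{Im}(J+\tilde P_i)\ge 0$: the dissipative bound requires $\mathrm{Im}(J+\tilde P_i)\ge v\,\id_{2d}$, and your expression $v\,\tilde U_i^{T}|\Gamma_i/\sqrt N-z|^{-2}\tilde U_i$ is only positive \emph{semi}definite. In fact, because $\Gamma_i$ has its $i$-th block row and column zeroed and $\tildeM_n(i)$ vanishes on the $i$-th block, one computes $\tilde P_i=\mathrm{diag}(-z^{-1}I_d,\,P_{22})$ with $P_{22}=\tildeM_n(i)(\Gamma_i/\sqrt N-z\id)^{-1}\tildeM_n(i)^{T}/N$, and then
\[
(J+\tilde P_i)^{-1}=\begin{pmatrix} -zP_{22}S^{-1} & zS^{-1}\\ zS^{-1} & S^{-1}\end{pmatrix},\qquad S=z\id_d+P_{22}\;,
\]
whose off-diagonal blocks have norm up to $|z|/v$, not $1/v$. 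So the stated bound is false, and the issue is not merely the rank of $\tilde U_i$ that you flag at the end. The paper sidesteps this precisely by working with the two $d\times d$ pieces $(-z\id_d-A_{12}(Z_{22}-z\id)^{-1}A_{21})^{-1}$ and $(z\id_d+X^{T}C^{-1}X)^{-1}$ directly, where the Fan--Hoffman argument gives the clean $1/v$ bound. Your route is salvageable---either by exploiting the block-diagonal structure of $\tilde P_i$ above to reduce $F(\tilde P_i,\tilde Q_i)$ to exactly the paper's two $d\times d$ traces, or by carrying an honest $|z|$-dependent bound on $(J+\tilde P_i)^{-1}$ through the estimates---but as written the key inequality and hence the claimed constant $g(z,K_i)=2d(2+K_i(z))/v^3$ are not justified.
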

\begin{proof}
We use the Lindeberg method, where we replace the $i$-th block row and column by a Gaussian version satisfying Equation \eqref{eq:keyRequirementLindeberg}.  
We call $\blockE_i$ the $d\times N$ matrix with $\blockE_i(j,k)=\delta_{k,(i-1)d+j}$. Recall that the block diagonal of $M_n$ is 0. We note that 
$$
M_n=\sum_{i=1}^n \big[\blockE_i\trsp \tildeM_n(i)+\tildeM_n(i)\trsp \blockE_i\big]\;.
$$
We call, for $1\leq k \leq n-1$,  
$$
I_n(k)=\sum_{i=1}^k \left[\blockE_i\trsp \tildeM_n(i)+\tildeM_n(i)\trsp \blockE_i\right]+\sum_{i=k+1}^n \left[\blockE_i\trsp \tildeGM_n(i)+\tildeGM_n(i)\trsp \blockE_i\right]\;,
$$
and extend the definition for $k=0$ and $k=n$ with $I_n(0)=GM_n$ and $I_n(n)=M_n$. 
Clearly, 
\begin{gather*}
\trace{\left(\frac{GM_n}{\sqrt{\totalSize}}-z\id\right)^{-1}}-\trace{\left(\frac{M_n}{\sqrt{\totalSize}}-z\id\right)^{-1}}\\
=\sum_{k=0}^{n-1} \Big[\trace{\left(\frac{I_n(k)}{\sqrt{\totalSize}}-z\id\right)^{-1}}-\trace{\left(\frac{I_n(k+1)}{\sqrt{\totalSize}}-z\id\right)^{-1}}\Big]\;.
\end{gather*}
Therefore, 
$$
\left|\Exp{\mathsf{gm}_n(z)}-\Exp{\mathsf{m}_n(z)}\right|\leq \frac{1}{nd}\sum_{k=0}^{n-1}\left|\Exp{\trace{\left(\frac{I_n(k)}{\sqrt{\totalSize}}-z\id\right)^{-1}}-\trace{\left(\frac{I_n(k+1)}{\sqrt{\totalSize}}-z\id\right)^{-1}}}\right|.
$$
Now the conditions of Theorem \ref{thm:replaceFirstBlockRow} are satisfied, so Equation \eqref{eq:keyApproxCoro} applies to 
$$
\left|\Exp{\trace{\left(\frac{I_n(k)}{\sqrt{\totalSize}}-z\id\right)^{-1}}-\trace{\left(\frac{I_n(k+1)}{\sqrt{\totalSize}}-z\id\right)^{-1}}}\right|\;.
$$ 
Thus we have established Equation \eqref{eq:approxMeanStieltjesT}.
\end{proof}
\section{Application to block random matrices with independent block entries}
To show that our theory applies, we just need to verify that Assumptions B1 is satisfied. Let us translate it, in the context of block matrices, to easier-to-verify assumptions about the block matrices constituting the block entries. 
We remind the reader that we assume that $\Exp{M_n}=0$ and hence the same is true for the random block matrices we are dealing with. 

\subsection{On \ref{eq:assumptionB12}}
In the case of block random matrices with independent block entries, we write 
$$
\forall\, i\,, \, M_n(i)=\big(\underbrace{M_n[i,1]}_{d}\;\; \underbrace{M_n[i,2]}_d \ldots \underbrace{M_n[i,n]}_d\big)\;,
$$
where $M_n[i,k]$ is the $k$-th $d\times d$ block matrix on the $i$-th strip of $M_n$.

We now present an easy-to-verify condition to make sure that \ref{eq:assumptionB12} is satisfied in certain models of interest. (The notation $\tildeM_n(i)$ that appears below is introduced for instance in Theorem \ref{thm:replacementByGaussian} on p. \pageref{thm:replacementByGaussian}.)
\begin{lemma}
Suppose the matrix $M_n$ is constituted of $d\times d$ independent blocks and $\Exp{M_n}=0$. Call ${\cal S}^{i}_m[k,j]$ the (cross-) covariance matrix of the $j$-th row and $k$-th row of the $m$-th block matrix on the $i$-th strip of $M_n$ (i.e $M_n[i,m]$). 
If there exists $C$ such that $\opnorm{{\cal S}^i_m[j,k]}\leq C$, then for any real, symmetric, deterministic matrix $\Gamma$, 
$$
\opnorm{\frac{1}{nd}\Exp{\tildeM_n(i)(\Gamma -z\id)^{-2}\tildeM_n(i)\trsp}}\leq \frac{Cd}{v^2}\;.
$$
In other words, \ref{eq:assumptionB12} is satisfied with $K_i(z)=Cd/v^2$. 
\end{lemma}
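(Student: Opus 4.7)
My strategy is a two-step reduction: first push the operator norm from $\Exp{\tildeM_n(i)(\Gamma-z\id)^{-2}\tildeM_n(i)\trsp}$ down to $\Exp{\tildeM_n(i)\tildeM_n(i)\trsp}$ at the cost of a factor $1/v^2$, and then estimate the latter using the block structure together with the covariance hypothesis. Independence of the blocks is never really used beyond the fact that $\Exp{M_n}=0$ and the obvious collapse of $\tildeM_n(i)\tildeM_n(i)\trsp$ into a sum of per-block outer products.

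\textbf{Step 1 (reduction via diagonalization).} Since $\Gamma$ is real symmetric, diagonalize $\Gamma=QDQ\trsp$ with $Q$ real orthogonal and $D=\diag(\lambda_1,\dots,\lambda_N)$. Setting $Y:=\tildeM_n(i)Q$ (so $YY\trsp=\tildeM_n(i)\tildeM_n(i)\trsp$), one has
\[
\Exp{\tildeM_n(i)(\Gamma-z\id)^{-2}\tildeM_n(i)\trsp}=\sum_{l=1}^{N} w_l\,C_l,\qquad w_l:=(\lambda_l-z)^{-2},\quad C_l:=\Exp{Y_{\cdot l}\,Y_{\cdot l}\trsp},
\]
with $|w_l|\leq 1/v^2$ and each $C_l$ a real PSD $d\times d$ matrix. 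For any unit $u,v\in\CC^d$, Cauchy--Schwarz applied to the PSD form $\langle\cdot,\cdot\rangle_{C_l}$ gives $|u^*C_lv|\leq\sqrt{u^*C_lu}\sqrt{v^*C_lv}$, and a second Cauchy--Schwarz on the sum over $l$ yields
\[
\Bigl|u^*\sum_l w_lC_l\,v\Bigr|\leq\frac{1}{v^2}\sqrt{u^*\Bigl(\sum_l C_l\Bigr)u}\,\sqrt{v^*\Bigl(\sum_l C_l\Bigr)v}\leq\frac{1}{v^2}\,\opnorm{\Exp{\tildeM_n(i)\tildeM_n(i)\trsp}},
\]
because $\sum_l C_l=\Exp{YY\trsp}=\Exp{\tildeM_n(i)\tildeM_n(i)\trsp}$ is PSD Hermitian, so $u^*(\sum_l C_l)u\leq\opnorm{\sum_l C_l}$ for unit $u$. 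Taking the supremum over unit $u,v$ produces the desired operator-norm inequality.

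\textbf{Step 2 (block-structure bound).} The block decomposition $\tildeM_n(i)=[\,0_{d\times id}\;\;M_n[i,i+1]\;\cdots\;M_n[i,n]\,]$ gives
\[
\tildeM_n(i)\tildeM_n(i)\trsp=\sum_{m=i+1}^{n}M_n[i,m]\,M_n[i,m]\trsp,
\]
with no cross terms. Each $A_m:=\Exp{M_n[i,m]M_n[i,m]\trsp}$ is PSD and has $(j,k)$-entry equal to $\tr({\cal S}^i_m[j,k])$ (using $\Exp{M_n}=0$). Since $A_m\succeq 0$ implies $\opnorm{A_m}\leq\tr(A_m)=\sum_{j=1}^d\tr({\cal S}^i_m[j,j])$, and since $|\tr(S)|\leq d\opnorm{S}$ for any $d\times d$ $S$ together with $\opnorm{{\cal S}^i_m[j,j]}\leq C$, one obtains $\opnorm{A_m}\leq d^2C$. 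Summing over $m$ yields $\opnorm{\Exp{\tildeM_n(i)\tildeM_n(i)\trsp}}\leq (n-i)d^2C\leq nd^2C$, and after dividing by $nd$ and combining with Step 1 we conclude
\[
\opnorm{\frac{1}{nd}\Exp{\tildeM_n(i)(\Gamma-z\id)^{-2}\tildeM_n(i)\trsp}}\leq\frac{1}{nd}\cdot\frac{1}{v^2}\cdot nd^2C=\frac{Cd}{v^2}.
\]

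\textbf{Main obstacle.} The only delicate point is Step 1: because $z\in\CC^+$, the matrix $(\Gamma-z\id)^{-2}$ is complex symmetric rather than Hermitian, so one cannot naively pull the bound $\opnorm{(\Gamma-z\id)^{-2}}\leq 1/v^2$ outside the expectation while retaining $\Exp{\tildeM_n(i)\tildeM_n(i)\trsp}$ under the norm on the right. The PSD Cauchy--Schwarz argument after diagonalizing $\Gamma$ is the clean device that makes the $1/v^2$ factor come out cleanly while preserving the $\Exp{\tildeM_n(i)\tildeM_n(i)\trsp}$ structure, which is exactly what the covariance hypothesis controls.
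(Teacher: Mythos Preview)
Your proof is correct and takes a genuinely different route from the paper's.

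The paper keeps the full matrix $T=(\Gamma-z\id)^{-2}$ in play throughout: writing $A=\tildeM_n(i)$ as a row of independent $d\times d$ blocks $A[m]$, it expands $\Exp{ATA\trsp}=\sum_{m,m'}\Exp{A[m]T[m,m']A[m']\trsp}$ and uses independence together with $\Exp{A}=0$ to kill the off-diagonal terms. Each surviving $d\times d$ matrix $\Exp{A[m]T[m,m]A[m]\trsp}$ is then bounded entrywise via $|\trace{T[m,m]{\cal S}^i_m[j,k]}|\leq d\,\opnorm{T[m,m]}\,\opnorm{{\cal S}^i_m[j,k]}\leq Cd/v^2$, using the full cross-covariance hypothesis (all $j,k$) and Fact~\ref{fact:normOfSubblocksLessThanNormOfMatrix}.

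Your approach instead dispatches $(\Gamma-z\id)^{-2}$ at the outset: diagonalizing $\Gamma$ turns the target into a weighted sum $\sum_l w_l C_l$ of PSD matrices with $|w_l|\leq 1/v^2$, and the double Cauchy--Schwarz reduces everything to bounding $\opnorm{\Exp{\tildeM_n(i)\tildeM_n(i)\trsp}}$. At that point the cross-block terms vanish \emph{identically} (no independence required), and you only ever invoke the diagonal covariances ${\cal S}^i_m[j,j]$. So your argument is slightly more general---it never uses block independence within a strip, nor the off-diagonal cross-covariance bounds---and your ``main obstacle'' paragraph correctly pinpoints why the naive route of pulling $\opnorm{(\Gamma-z\id)^{-2}}$ outside fails. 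The paper's approach, by contrast, is more computational and makes the role of independence explicit, which dovetails with the surrounding sections; but both land on the same constant $Cd/v^2$.
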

\begin{proof}
	The matrix $\tildeM_n(i)$ is constituted of $n$ independent $d\times d$ matrices, $i$ of them being $0_{d\times d}$. 
	Let us call $A$ a generic $d\times N$ matrix, constituted of $d\times d$ independent blocks with $\Exp{A}=0$. We call $A[i]$ its $i$-th $d\times d$ block i.e 
	$$
	A=\big(\underbrace{A[1]}_{d}\;\; \underbrace{A[2]}_d \ldots \underbrace{A[n]}_d\big)\;.
	$$ 

	If $T$ is a deterministic $N\times N$ matrix, we call $T[i,j]$ its $(i,j)$-th $d\times d$ block.  We have 
	$$
	A T A\trsp=\sum_{1\leq i,j\leq n} A[i] T[i,j] A[j]\trsp\;.
	$$
	By independence of $A[i]$ and $A[j]$ when $i\neq j$, $\Exp{A[i] T[i,j] A[j]\trsp}=0_{d\times d}$ when $i\neq j$.  Hence,
	$$
	\Exp{A T A\trsp}=\sum_{1\leq i\leq n} \Exp{A[i] T[i,i] A[i]\trsp}\;.
	$$
	Note that if we can bound uniformly $\Exp{\opnorm{A[i] T[i,i] A[i]\trsp}}$ by $K(z)$, then we have 
	$$
	\frac{1}{nd}\opnorm{\Exp{A T A\trsp}}\leq \frac{K(z)}{d}\;.
	$$
	So let us focus on the $d\times d$ matrix  $Q[i]=\Exp{A[i] T[i,i] A[i]\trsp}$. Let us call $r_k$ the $k$-th row of $A[i]$. The $k,j$ entry of $Q[i]$ is just 
	$$
	Q[i](k,j)=\Exp{r_k T[i,i]r_j\trsp}=\trace{T[i,i]\Exp{r_j\trsp r_k}}=\trace{T[i,i]{\cal S}_i[k,j]}\;,
	$$
	where ${\cal S}_{i}[k,j]=\Exp{r_j\trsp r_k}$ is the $d\times d$ cross-covariance matrix between the $j$-th and the $k$-th rows of $A[i]$. 

	Suppose that $\opnorm{T[i,i]}\leq \frac{1}{v}$ and $\opnorm{{\cal S}_{i}[k,j]}\leq C$, where $C$ is a constant independent of $i,j,k$. Then, 
	$$
	\left|Q[i](j,k)\right|\leq \frac{Cd}{v}\;, \forall (j,k)\;.
	$$
	Therefore, 
	$$
	\opnorm{Q[i]}\leq \frac{Cd^2}{v}\;.
	$$
	We note that if $T=(\Gamma-z\id)^{-2}$, where $\Gamma$ is real symmetric, then $\opnorm{T}\leq 1/v^2$, if $v=\imag{z}$. 
The Lemma is shown.
	
\end{proof}
\subsection{Concentration of quadratic forms in block rows}
We now give sufficient conditions for \ref{eq:assumptionB11} and \ref{eq:assumptionB13} to be satisfied.

\begin{lemma}\label{lemma:controlVariance}
Let us call ${\cal Q}=A T A\trsp$, where $A$ is a $d\times (nd)$ real random matrix composed of independent $d\times d$ blocks, denoted by $A[i]$. We assume that $\Exp{A}=0$.
$T$ is a $nd\times nd$ symmetric matrix with complex entries with $\opnorm{T}\leq \frac{1}{v}$.

Denote by ${\cal S}_i[k,k]$ the covariance matrix of the $k$-th row of the matrix $A[i]$. Assume that there exists $C>0$ such that 
$$
\sup_{1\leq i \leq n}\sup_{1\leq k \leq d} \opnorm{{\cal S}_i[k,k]}\leq C\;.
$$
Assume further that, for some $\eps>0$, the rows of $A[i]$ have uniformly bounded $2+2\eps$-th moments, for all $1\leq i \leq n$. 
Then, when $d$ is fixed, we have 
$$
\opnorm{\Exp{\left|{\cal Q}-\Exp{{\cal Q}}\right|}}=\frac{\gO(n^{1/(1+\eps)}\wedge n^{1/2})}{v}\;.
$$
\end{lemma}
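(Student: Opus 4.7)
The plan is to decompose ${\cal Q}-\Exp{{\cal Q}}$ into a block-diagonal sum
\[
D=\sum_{i=1}^n\bigl(A[i]\,T[i,i]\,A[i]\trsp-\Exp{A[i]\,T[i,i]\,A[i]\trsp}\bigr)
\]
of independent mean-zero $d\times d$ matrices, plus a block-off-diagonal sum
\[
OD=\sum_{i\neq j} A[i]\,T[i,j]\,A[j]\trsp,
\]
which is already centered thanks to block independence and $\Exp{A[i]}=0$. Since $d$ is held fixed, the operator norm differs from the largest entry by at most a factor $d$, so it suffices to bound $\Exp{|({\cal Q}-\Exp{{\cal Q}})_{k,l}|}$ for a generic entry $(k,l)$ and then take the maximum.

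For the diagonal piece, $(D)_{k,l}$ is a sum of $n$ independent mean-zero real random variables. Each summand is a bilinear form in the $k$-th and $l$-th rows of $A[i]$ with coefficient matrix $T[i,i]$, so by Cauchy--Schwarz applied to the $(2+2\eps)$-th row-moment hypothesis together with $\opnorm{T[i,i]}\leq 1/v$, its $(1+\eps)$-th moment is of order $1/v^{1+\eps}$. Applying the Marcinkiewicz--Zygmund inequality in $L^{1+\eps}$ for $0<\eps\leq 1$ (or a direct variance computation when $\eps\geq 1$, for which a finite second moment is available by Cauchy--Schwarz from the $(2+2\eps)$-th moment) yields
\[
\Exp{|(D)_{k,l}|}=\gO\!\left(\frac{n^{1/(1+\eps)}\wedge n^{1/2}}{v}\right).
\]

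For the off-diagonal piece I would compute $\Exp{(OD)_{k,l}^2}$ by expanding the square. Block independence together with $\Exp{A[\cdot]}=0$ forces all surviving contributions to come from configurations $(i,j),(i',j')$ with $\{i',j'\}=\{i,j\}$, since any partial match leaves an isolated factor with zero expectation. Each surviving term is a bilinear form in $T[i,j]$ whose magnitude is controlled by a constant times $\|T[i,j]\|_F^2$, with the constant coming from $\opnorm{{\cal S}_i[k,k]}\leq C$ together with the Cauchy--Schwarz bounds this implies on cross-row covariances. Summing,
\[
\Exp{(OD)_{k,l}^2}\leq K\sum_{i\neq j}\|T[i,j]\|_F^2\leq K\|T\|_F^2\leq Knd\,\opnorm{T}^2=\gO(n/v^2),
\]
so that $\Exp{|(OD)_{k,l}|}=\gO(\sqrt{n}/v)$ by Jensen. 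Combining the two estimates produces the announced rate.

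The main obstacle is the fourth-moment bookkeeping in the off-diagonal variance: one must verify that the only surviving pairings are $(i',j')=(i,j)$ and $(i',j')=(j,i)$, and then convert the block-level Frobenius sum into the global operator-norm bound via $\|T\|_F^2\leq nd\,\opnorm{T}^2$. Once this is in place, the remaining work --- reducing the bounded $(2+2\eps)$-th row moments to a hypothesis on the $(1+\eps)$-th moment of each summand of $D$ and then invoking Marcinkiewicz--Zygmund --- is routine $L^p$ arithmetic.
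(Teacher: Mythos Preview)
Your proposal is correct and matches the paper's proof in structure: decompose into block-diagonal and block-off-diagonal parts, control the diagonal sum of independent centered terms via Marcinkiewicz--Zygmund in $L^{1+\eps}$, and control the off-diagonal sum by a second-moment computation that isolates the two surviving pairings $(i',j')\in\{(i,j),(j,i)\}$. The one execution difference is that the paper first polarizes each entry ${\cal Q}(j,k)=r_jTr_k\trsp$ into a difference of genuine quadratic forms $\mathsf{r}\trsp T\mathsf{r}$ in a single block-independent vector $\mathsf{r}=r_j\pm r_k$, which lets it package the off-diagonal variance as $\trace{TD_\Sigma T^*D_\Sigma}$ (with $D_\Sigma$ block-diagonal of bounded operator norm) rather than your $\sum_{i\neq j}\|T[i,j]\|_F^2\leq \|T\|_F^2\leq nd/v^2$ route; both give the same $\gO(n/v^2)$ bound and the remainder of the argument is identical.
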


\begin{proof}
Let us call ${\cal Q}(j,k)$ the $(j,k)$ entry of ${\cal Q}$.
Since $d$ is held fixed, to show the result, it is enough to show that 
$$
\forall 1\leq j,k\leq d \;, \Exp{|{\cal Q}(j,k)-\Exp{{\cal Q}(j,k)}|}=\frac{\gO(n^{1/(1+\eps)}\wedge n^{1/2})}{v}\;.
$$
Let us call $r_j$ the $j$-th row of $A$. Clearly, 
$$
{\cal Q}(j,k)=r_j T r_k\trsp=\frac{1}{4}\left((r_j+r_k) T (r_j+r_k)\trsp-(r_j-r_k) T (r_j-r_k)\right)\;.
$$
Hence, to understand ${\cal Q}$, we simply need to understand forms of the type 
$$
f(\mathsf{r})= \mathsf{r}\trsp T \mathsf{r}
$$
where $\mathsf{r} \in \mathbb{R}^{nd}$ is a random vector composed of independent blocks of size $d$. Indeed, given the structure we have assumed for $A$, it is clear that both $r_j+r_k$ and $r_j-r_k$ are vectors composed of independent blocks of length $d$. (Our assumptions about ${\cal S}_i[k,k]$ implies that the same assumptions are true for all the $\mathsf{r}$'s we will be looking at, with a upper bound less than $2C$.) In other words, 
$$
\mathsf{r}=\begin{pmatrix}
\mathsf{r}_1\\\mathsf{r}_2\\ \vdots \\ \mathsf{r}_n
\end{pmatrix}\;,
$$
where $\mathsf{r}_i \in \mathbb{R}^d$ are independent of each other. We call $\Sigma[i,i]$ the covariance matrix of $\mathsf{r}_i$. 

We have of course, if $T[i,j]$ denotes the $(i,j)-$th $d\times d$ block of $T,$
$$
f(\mathsf{r})=\sum_{i,j} \mathsf{r}_i\trsp T[i,j] \mathsf{r}_j\triangleq \sum_i \mathsf{r}_i\trsp T[i,i] \mathsf{r}_i +{\mathcal R}\;.
$$

$\bullet$ \textbf{On $\bm{\var{{\mathcal R}}}$}
By definition, 
$$
{\mathcal R}=\sum_{i\neq j} \mathsf{r}_i\trsp T[i,j] \mathsf{r}_j\;.
$$
Since $\mathsf{r}_i$ and $\mathsf{r}_j$ are independent when $i\neq j$, we see that $\Exp{{\mathcal R}}=0$. So 
$$
\var{{\mathcal R}}=\Exp{{\mathcal R}{\mathcal R}^*}=\sum_{(i\neq j), (k\neq l)} \Exp{\mathsf{r}_i\trsp T[i,j] \mathsf{r}_j\mathsf{r}_k\trsp T^*[k,l] \mathsf{r}_l}\;.
$$
If one of the indices $(i,j,k,l)$ appears exactly once, $\Exp{\mathsf{r}_i\trsp T[i,j] \mathsf{r}_j\mathsf{r}_k\trsp T^*[k,l] \mathsf{r}_l}=0$, by independence of the $\mathsf{r}_j$'s and the fact that they all have mean 0. Now since each index appears at most once in each pair, we see that each index can appear at most twice among the four indices. It is therefore clear that 
$$
\var{{\mathcal R}}=\sum_{i\neq j}\Exp{\mathsf{r}_i\trsp T[i,j] \mathsf{r}_j\mathsf{r}_i\trsp T^*[i,j] \mathsf{r}_j}+\Exp{\mathsf{r}_i\trsp T[i,j] \mathsf{r}_j\mathsf{r}_j\trsp T^*[j,i] \mathsf{r}_i}\;.
$$
Of course, by independence, 
$$
\Exp{\mathsf{r}_i\trsp T[i,j] \mathsf{r}_j\mathsf{r}_j\trsp T^*[j,i] \mathsf{r}_i}=\Exp{\mathsf{r}_i\trsp T[i,j]\Sigma[j,j]T^*[j,i] \mathsf{r}_i}=\trace{T[i,j]\Sigma[j,j]T^*[j,i]\Sigma[i,i]}\;.
$$
Consider the matrix $D_\Sigma$ which is block-diagonal with $i$-th diagonal block $\Sigma[i,i]$. We note that 
$$
\trace{TD_\Sigma T^* D_\Sigma}=\sum_{i,j}\trace{T[i,j]\Sigma[j,j]T^*[j,i]\Sigma[i,i]}\;.
$$
Note further that $\trace{T[i,i]\Sigma[i,i]T^*[i,i]\Sigma[i,i]}\geq 0$. So we conclude that 
$$
\sum_{i\neq j}\Exp{\mathsf{r}_i\trsp T[i,j] \mathsf{r}_j\mathsf{r}_j\trsp T^*[j,i] \mathsf{r}_i}\leq \trace{TD_\Sigma T^* D_\Sigma}\leq 4N\frac{C^2}{v^2}\;.
$$
The same argument works for $\sum_{i\neq j}\Exp{\mathsf{r}_i\trsp T[i,j] \mathsf{r}_j\mathsf{r}_i\trsp T^*[i,j] \mathsf{r}_j}$ and we conclude that 
$$
\var{{\mathcal R}}\leq 8N \frac{C^2}{v^2}\;.
$$
This naturally implies that 
$$
\Exp{|{\mathcal R}-\Exp{{\mathcal R}}|}\leq \frac{2\sqrt{2}\sqrt{N}C}{v}\;.
$$
Note that this bound works under the assumption that $\mathsf{r}_i$'s have uniformly bounded covariances, i.e only 2 moments. 

$\bullet$ \textbf{On the convergence of $D_1(\mathsf{r})=\sum_i \mathsf{r}_i\trsp T[i,i]\mathsf{r}_i$}
Note that 
$$
D_1(\mathsf{r})-\Exp{D_1(\mathsf{r})}=\sum_i X_i\;,
$$
where $X_i$ are independent and mean 0 random variables in $L_{1+\eps}$. Using the Marcienkiewicz-Zygmund inequality (\cite{ChowTeicherBook97}, p. 386), we see that for any $\eps>0$ there exists $B_{1+\eps}$ such that 
$$
\Exp{\left|D_1(\mathsf{r})-\Exp{D_1(\mathsf{r})}\right|^{1+\eps}}\leq B_{1+\eps}\Exp{\left[\sum_i X_i^2\right]^{(1+\eps)/2}}\;.
$$
We have $(\sum_i X_i^2)^{p/2}=(\sum_i (|X_i|^p)^{2/p})^{p/2}=\norm{Y}_{2/p}$, where $Y_i=|X_i|^p$. For $p\in [1,2]$, we have $2/p\geq 1$, so 
$\norm{Y}_{2/p}\leq \norm{Y}_1$. Therefore, when $p\in[1,2]$, 
$$
\left(\sum_i X_i^2\right)^{p/2}\leq \sum_{i}|X_i|^p\;.
$$
Hence, 
$$
\Exp{|D_1(\mathsf{r})-\Exp{D_1(\mathsf{r})}|^{1+\eps}}\leq B_{1+\eps} \Exp{\sum_i |X_i|^{1+\eps}}\;.
$$
We conclude that when $\mathsf{r}_i$'s have $2+2\eps$ moments with $0<\eps\leq 1$, we have 
$$
\Exp{\left|D_1(\mathsf{r})-\Exp{D_1(\mathsf{r})}\right|}\leq \left[\Exp{\left|D_1(\mathsf{r})-\Exp{D_1(\mathsf{r})}\right|^{1+\eps}}\right]^{1/(1+\eps)}\leq C_\eps \frac{n^{1/(1+\eps)}}{v}\;.
$$
$\bullet$\textbf{Conclusion}

We can finally conclude that, for all $(j,k)$
$$
\Exp{\left|{\cal Q}(j,k)-\Exp{{\cal Q}(j,k)}\right|}=\frac{\gO(n^{1/(1+\eps)}\wedge n^{1/2})}{v}\;.
$$
The result announced in the Lemma follows immediately since $d$ is assumed to be fixed. 
\end{proof}

\begin{corollary}\label{coro:suffCondiB11General}
Suppose the symmetric matrix $M_n$ is such that its $i$-th ($d$-high) block row/strip, $M_n(i)$, is composed of independent $d\times d$ matrices.
Denote by ${\cal S}_{m}^{i}[k,k]$ the covariance matrix of the $k$-th row of $M_n[i,m]$. Assume that there exists $C>0$ such that 
$$
\sup_{1\leq i \leq n}\sup_{i\leq m \leq n} \sup_{1\leq k \leq d} \opnorm{{\cal S}_{m}^{i}[k,k]}\leq C\;.
$$
Assume further that the rows of all the $d\times d$ block matrices above the block diagonal of $M_n$ have uniformly bounded $(2+2\eps)$-th moments ($\eps>0$) and that $\Exp{M_n}=0$.
Then \ref{eq:assumptionB11} and \ref{eq:assumptionB13} hold with $R_i=\gO(n^{-\eps/(1+\eps)}\wedge n^{-1/2})$.
\end{corollary}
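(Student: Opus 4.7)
The plan is to deduce both \ref{eq:assumptionB11} and \ref{eq:assumptionB13} as essentially direct applications of Lemma \ref{lemma:controlVariance} to the strip $\tildeM_n(i)$ of $M_n$. I would first set $A = \tildeM_n(i)$ and verify that this matrix satisfies the hypotheses of Lemma \ref{lemma:controlVariance}: it is a $d\times nd$ matrix composed of $n$ independent $d\times d$ blocks, where the first $i$ blocks are $0_{d\times d}$ (trivially independent from and uncorrelated with everything) and the remaining blocks are the upper-triangular blocks $M_n[i,i+1],\ldots,M_n[i,n]$, to which the covariance and moment hypotheses of the corollary apply. We have $\Exp{A}=0$ by assumption, and the row covariance bound $\sup_{i,m,k}\opnorm{{\cal S}^i_m[k,k]}\leq C$ is exactly the one given, so the uniform-in-$i$ hypothesis of the Lemma holds with the same $C$. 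The uniform $(2+2\veps)$-moment condition is also inherited directly.

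For \ref{eq:assumptionB11}, I would then take $T=(\Gamma-z\id)^{-1}$; since $\Gamma$ is real symmetric and $\imag{z}=v$, we have $\opnorm{T}\leq 1/v$, matching the Lemma's hypothesis. Applying Lemma \ref{lemma:controlVariance} yields
$$
\opnorm{\Exp{\bigl|\tildeM_n(i)T\tildeM_n(i)\trsp-\Exp{\tildeM_n(i)T\tildeM_n(i)\trsp}\bigr|}}=\frac{\gO\bigl(n^{1/(1+\veps)}\wedge n^{1/2}\bigr)}{v}\;,
$$
and dividing by $nd$ produces a bound of the form $R_i/v$ with $R_i=\gO(n^{-\veps/(1+\veps)}\wedge n^{-1/2})$, uniformly in $i$ and in the deterministic matrix $\Gamma$, which is exactly \ref{eq:assumptionB11}.

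For \ref{eq:assumptionB13}, I would repeat the same argument with $T=(\Gamma-z\id)^{-2}$, which satisfies $\opnorm{T}\leq 1/v^2$. The only non-trivial point is that Lemma \ref{lemma:controlVariance} is stated with $\opnorm{T}\leq 1/v$, but its proof uses this bound only to control $\trace{TD_\Sigma T^*D_\Sigma}$ and $\sum_i\Exp{|X_i|^{1+\veps}}$, both of which scale linearly in $\opnorm{T}$ with respect to the final conclusion. Therefore the same argument, with $v$ replaced by $v^2$ in the conclusion, gives
$$
\frac{1}{nd}\Exp{\opnorm{\tildeM_n(i)(\Gamma-z\id)^{-2}\tildeM_n(i)\trsp-\Exp{\tildeM_n(i)(\Gamma-z\id)^{-2}\tildeM_n(i)\trsp}}}\leq \frac{R_i}{v^2}
$$
with the same $R_i=\gO(n^{-\veps/(1+\veps)}\wedge n^{-1/2})$, which is \ref{eq:assumptionB13}.

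There is no real obstacle here beyond checking this rescaling step cleanly; the only mildly subtle ingredient is that the presence of the zero blocks in $\tildeM_n(i)$ does not spoil the application of the Lemma, which is immediate since a deterministic block is both independent of the others and contributes zero to all the cross-terms in the variance computation.
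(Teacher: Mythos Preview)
Your proposal is correct and follows exactly the same approach as the paper: the paper's proof is a one-line remark that the corollary is an immediate application of Lemma \ref{lemma:controlVariance}, together with the observation that padding a block row with $0_{d\times d}$ blocks causes no difficulty since a zero block can be treated as a random variable with zero covariance. Your write-up simply unpacks this in more detail, including the rescaling from $1/v$ to $1/v^2$ needed for \ref{eq:assumptionB13}, which the paper leaves implicit.
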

The proof is an immediate application of Lemma \ref{lemma:controlVariance}. Note that ``padding'' a block row with 0 block matrices does not change anything to our analysis: just consider the 0 block as a random variables with 0-covariance.

\begin{corollary}\label{coro:suffCondiB11GaussOrBounded}
Suppose the symmetric matrix $M_n$ is such that $M_n(i)$ is composed of independent $d\times d$ matrices. Suppose the entries of $M_n$ are either bounded or Gaussian. 
Denote by ${\cal S}_{m}^{i}[k,k]$ the covariance matrix of the $k$-th row of $M_n[i,m]$. Assume that there exists $C>0$ such that 
$$
\sup_{1\leq i \leq n}\sup_{i\leq m \leq n} \sup_{1\leq k \leq d} \opnorm{{\cal S}_m^{i}[k,k]}\leq C\;.
$$
Then \ref{eq:assumptionB11} and \ref{eq:assumptionB13} hold with $R_i=\gO(n^{-1/2})$.
\end{corollary}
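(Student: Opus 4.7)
The plan is to follow the structure of the proof of Lemma \ref{lemma:controlVariance} / Corollary \ref{coro:suffCondiB11General}, replacing the Marcinkiewicz–Zygmund step by a direct second-moment estimate that is available because the entries are either bounded or Gaussian. After the polarization identity and the reduction to
$$
f(\mathsf{r})=\mathsf{r}^\top T\mathsf{r}=\sum_{i}\mathsf{r}_i^\top T[i,i]\mathsf{r}_i +{\mathcal R},
$$
used in Lemma \ref{lemma:controlVariance}, the control of the cross term ${\mathcal R}$ already yields $\mathbb{E}|{\mathcal R}-\mathbb{E}{\mathcal R}|\leq 2\sqrt{2N}\,C/v=\mathcal{O}(\sqrt{n})/v$ and only uses boundedness of the row-covariances, so that part transfers verbatim. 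The whole task is to upgrade the bound on $D_1(\mathsf{r})-\mathbb{E}[D_1(\mathsf{r})]=\sum_i X_i$ from $\mathcal{O}(n^{1/(1+\eps)})/v$ to $\mathcal{O}(\sqrt{n})/v$.

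Since the summands $X_i=\mathsf{r}_i^\top T[i,i]\mathsf{r}_i-\mathbb{E}[\cdot]$ are independent and mean zero, Cauchy–Schwarz gives
$$
\mathbb{E}|D_1-\mathbb{E}[D_1]|\leq \Bigl(\sum_{i}\mathrm{Var}(X_i)\Bigr)^{1/2},
$$
so the job reduces to a uniform bound $\mathrm{Var}(X_i)=\mathcal{O}(1/v^2)$. In the bounded case, the assumption gives $|\mathsf{r}_i|_\infty\leq M$ a.s., hence $|\mathsf{r}_i^\top T[i,i]\mathsf{r}_i|\leq dM^2\|T[i,i]\|\leq dM^2/v$ deterministically, so $\mathrm{Var}(X_i)\leq 4 d^2 M^4/v^2$. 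In the Gaussian case, I diagonalize $\Gamma=Q\Lambda Q^\top$ and write
$$
T=(\Gamma-z\id)^{-1}=U+iV,\qquad U,V\text{ real symmetric},\ \|U\|,\|V\|\leq 1/v,
$$
so $X_i=(\mathsf{r}_i^\top U\mathsf{r}_i-\mathbb{E}[\cdot])+i(\mathsf{r}_i^\top V\mathsf{r}_i-\mathbb{E}[\cdot])$. For a real centered Gaussian $\mathsf{r}_i$ with covariance $\Sigma_i$ and a real symmetric matrix $A$ one has $\mathrm{Var}(\mathsf{r}_i^\top A\mathsf{r}_i)=2\,\mathrm{tr}((A\Sigma_i)^2)\leq 2d\|A\|^2\|\Sigma_i\|^2$, whence
$$
\mathrm{Var}(X_i)=2\,\mathrm{tr}((U\Sigma_i)^2)+2\,\mathrm{tr}((V\Sigma_i)^2)\leq 4dC^2/v^2.
$$
In either case, summing over $i$ gives $\mathbb{E}|D_1-\mathbb{E}[D_1]|=\mathcal{O}(\sqrt{n})/v$.

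Combining the two pieces yields $\mathbb{E}|{\mathcal Q}(j,k)-\mathbb{E}[{\mathcal Q}(j,k)]|=\mathcal{O}(\sqrt{n})/v$ for every $(j,k)\in\{1,\dots,d\}^2$; since $d$ is fixed, the operator norm is controlled by the maximal entry (as the remark following \ref{eq:assumptionB11} explains) and hence $\mathbb{E}\|{\mathcal Q}-\mathbb{E}[{\mathcal Q}]\|=\mathcal{O}(\sqrt{n})/v$. Dividing by $nd$ shows that \ref{eq:assumptionB11} holds with $R_i=\mathcal{O}(n^{-1/2})$. For \ref{eq:assumptionB13}, exactly the same argument applies with $T=(\Gamma-z\id)^{-2}$, whose operator norm is bounded by $1/v^2$ and which still splits into real-symmetric real and imaginary parts of norm $\leq 1/v^2$; this simply replaces every occurrence of $1/v$ above by $1/v^2$ and gives the bound of the form $R_i/v^2$ required by \ref{eq:assumptionB13}.

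The only mildly delicate point, and so the main obstacle, is the Gaussian variance computation: $T$ is complex symmetric (not Hermitian), so one has to carefully split it into real and imaginary real-symmetric parts before invoking the real Gaussian quadratic form variance formula. Once that splitting is written down, all the remaining estimates are elementary, and the cross-term bound from Lemma \ref{lemma:controlVariance} is used as a black box.
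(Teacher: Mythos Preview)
Your argument is correct, but the paper takes a much shorter route. Rather than reopening the proof of Lemma~\ref{lemma:controlVariance} and replacing the Marcinkiewicz--Zygmund step by a case-by-case variance bound, the paper simply observes that bounded entries trivially have all moments, and Gaussian entries with covariance bounded by $C$ likewise have uniformly bounded moments of every order. In particular the $(2+2\eps)$-th moment hypothesis of Corollary~\ref{coro:suffCondiB11General} is satisfied with $\eps=1$, and that corollary immediately gives $R_i=\gO(n^{-\eps/(1+\eps)}\wedge n^{-1/2})=\gO(n^{-1/2})$. So the paper's proof is a one-line reduction to the preceding corollary, whereas you redo the whole estimate from scratch. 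Your direct variance computation for $D_1$ (splitting $T$ into real and imaginary real-symmetric parts in the Gaussian case, using the deterministic bound in the bounded case) is perfectly fine and self-contained, but it buys nothing over simply checking the moment hypothesis and invoking Corollary~\ref{coro:suffCondiB11General}.
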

When the entries of the matrix are Gaussian or bounded, the $2+2\eps$-th moment condition is automatically satisfied when our condition on covariance matrices is satisfied. 
\subsection{On $\Exp{\tildeM_n(i)uu\trsp \tildeM_n(i)\trsp}$}
The following fact will be helpful in establishing equivalence between models from a LSD point of view. 
\begin{fact}\label{fact:allThatMattersIsSymmPartOfCrossCovariances}
Let $\tildeM_n^{(1)}$ and $\tildeM_n^{(2)}$ be two random $d\times N$ strips with mean 0. Let us call  $C^{(1)}[j,k]$ the cross-covariance between the $j$-th and the $k$-th row of $\tildeM_n^{(1)}$ and $C^{(2)}[j,k]$ the cross-covariance between the $j-$th and the $k-$th row of $\tildeM_n^{(2)}$. 
Suppose that 
$$
\forall (j,k) \;\;, C^{(1)}[j,k]+(C^{(1)}[j,k])\trsp=C^{(2)}[j,k]+(C^{(2)}[j,k])\trsp\;.
$$
Then, if $u$ is any deterministic vector,
$$
\Exp{\tildeM_n^{(1)} uu\trsp \tildeM_n^{(1)'} }=\Exp{\tildeM_n^{(2)} uu\trsp \tildeM_n^{(2)'} }\;.
$$
\end{fact}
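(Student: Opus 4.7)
The plan is to compute both sides entrywise and reduce them to quadratic forms in $u$ against the cross-covariance matrices; the identity $u^\top A u = u^\top A^\top u$ (valid for any real scalar) then shows that only the symmetric part of each cross-covariance matrix enters, which is exactly what the hypothesis controls.

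\medskip

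Concretely, first I would write out the $(j,k)$ entry of the $d \times d$ matrix $\tildeM_n^{(l)} u u^\top (\tildeM_n^{(l)})^\top$ in coordinates. Letting $r_j^{(l)}$ denote the $j$-th row of $\tildeM_n^{(l)}$, this entry equals $(r_j^{(l)} u)(r_k^{(l)} u) = \sum_{a,b} u_a u_b (\tildeM_n^{(l)})_{ja}(\tildeM_n^{(l)})_{kb}$. Taking expectations and using the definition $C^{(l)}[j,k]_{a,b} = \Exp{(\tildeM_n^{(l)})_{ja}(\tildeM_n^{(l)})_{kb}}$ yields
$$
\Exp{(\tildeM_n^{(l)} u u^\top (\tildeM_n^{(l)})^\top)_{jk}} = u^\top C^{(l)}[j,k]\, u\;.
$$

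\medskip

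Second, I would invoke the scalar identity: for any real vector $u$ and any real matrix $A$, $u^\top A u$ is a scalar, hence equal to its own transpose $u^\top A^\top u$, so
$$
u^\top C^{(l)}[j,k]\, u = \tfrac{1}{2}\, u^\top\bigl(C^{(l)}[j,k] + (C^{(l)}[j,k])^\top\bigr)\, u\;.
$$
By the hypothesis, the symmetrized matrices agree for $l=1,2$, so the right-hand side does not depend on $l$. This gives entrywise equality of the two expectations, which is the conclusion of the fact.

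\medskip

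No step is truly an obstacle here: the proof is essentially a one-line coordinate computation followed by the symmetric-part trick. The only point requiring mild care is bookkeeping of indices in the definition of the cross-covariance $C^{(l)}[j,k]$ (row $j$ versus row $k$, and which slot $a$ versus $b$ sits in), to make sure the final quadratic form involves $C^{(l)}[j,k]$ itself rather than $C^{(l)}[k,j]$; but since $(C^{(l)}[j,k])^\top = C^{(l)}[k,j]$, either convention leads to the same symmetrization and the conclusion is unaffected.
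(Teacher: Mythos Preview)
Your proof is correct and is essentially the same as the paper's: both compute the $(j,k)$ entry of the expectation and reduce it to the symmetric part of the cross-covariance. The only cosmetic difference is that the paper phrases the computation via traces (writing $\trace{uu^\top C[j,k]}$ and using $\trace{A}=\trace{A^\top}$) while you phrase it as a quadratic form $u^\top C[j,k] u$ and use the scalar-equals-its-transpose identity---these are of course identical.
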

\begin{proof}
If $\mathsf{r}_j$ denotes the $j$-th row of $\tildeM_n$, we have, for the $(k,j)$-th entry of the matrix $\mathsf{EQ}=\Exp{\tildeM_nuu\trsp \tildeM_n\trsp}$,
$$
\mathsf{EQ}(k,j)=\Exp{\mathsf{r}_k uu\trsp \mathsf{r}_j\trsp}=\trace{uu\trsp \Exp{\mathsf{r}_j\trsp \mathsf{r}_k}}\;.
$$
Let us call $C[j,k]$ the cross-covariance matrix $C[j,k]=\Exp{\mathsf{r}_j\trsp \mathsf{r}_k}$. Since $\trace{AB}=\trace{BA}$ and $\trace{A}=\trace{A\trsp}$, we have 
$$
\mathsf{EQ}(k,j)=\trace{uu\trsp C[j,k]}=\trace{C[j,k]uu\trsp}=\trace{uu\trsp C\trsp[j,k]}=\trace{uu\trsp \frac{C[j,k]+C\trsp[j,k]}{2}}\;.
$$
The result is established.
\end{proof}

\paragraph{A remark on the case of anti-symmetric cross-covariances.}
We now assume that if $j\neq k$, the cross-covariance matrix $C[j,k]=\Exp{\mathsf{r}_j\trsp \mathsf{r}_k}$ is anti-symmetric (see the proof of Fact \ref{fact:allThatMattersIsSymmPartOfCrossCovariances} for the definition of $\mathsf{r}_j$'s). In this case, 
$$
C[j,k]+C\trsp[j,k]=0\;.
$$
This means in particular that if $\tildeM_n^{(1)}$ is such that its rows have anti-symmetric cross-covariance, we can create a ``good" $\tildeM_n^{(2)}$ by picking independent vectors matching the covariance of each row of $\tildeM_n^{(1)}$. This way $\tildeM_n^{(2)}$ clearly has anti-symmetric cross-covariance between its rows (indeed the cross-covariance is 0 for all pairs of distinct rows); but each row of $\tildeM_n^{(2)}$ has by construction the same covariance as the corresponding row of $\tildeM_n^{(1)}$.  So we have 
$$
\Exp{\tildeM_n^{(1)} uu\trsp \tildeM_n^{(1)'}}=\Exp{\tildeM_n^{(2)}uu\trsp \tildeM_n^{(2)'}}\;.
$$

\paragraph{The case of block matrices with mean 0.} We now assume the $d\times \totalSize$ matrix $\tildeM_n$ is made of $n$ $d\times d$ independent blocks.
In that case,  $\mathsf{r}_j$ and $\mathsf{r}_k$, its $j$-th and $k$-th rows, are composed of independent blocks, so $\Exp{\mathsf{r}_j\trsp \mathsf{r}_k}$ is block diagonal. The $l$-th $d\times d$ block on the diagonal is just the cross covariance between the $j$-th and $k$-th row of the $l$-th $d\times d$ block matrix in ${\tildeM_n}$, since $\Exp{\tildeM_n}=0$. So our assumptions about the cross-covariance of the rows of ${\tildeM_n}$ in Fact \ref{fact:allThatMattersIsSymmPartOfCrossCovariances} can be replaced by assumptions concerning the cross-covariance of the rows of the block matrices making up  ${\tildeM_n}$ and the same result holds.

\subsection{Applications and examples}
We now give some examples to show the applicability of our results. A source of motivation came from the examples discussed in Subsubsection \ref{subsubsec:classAveragingAlgo} below. A number of the examples we study here are idealized or simplified versions of those.

We start by defining a broad class of matrices for which we will show that our results apply and the LSD turn out to be the well-known semi-circle law.

\begin{nekDef}[$\sigma$-Simple Structure]
Let $B$ be a $d\times d$ random matrix. Call $\{r_i\}_{i=1}^d$ its rows. 
We say that the random matrix $B$ has \textit{$\sigma$-simple structure} if and only if
\begin{enumerate}
\item the entries of $B$ have $2+\eps$ moments for some $\eps>0$.
\item if $j\neq k$, $\Exp{r_j\trsp r_k}$ is anti-symmetric with $\opnorm{\Exp{r_j\trsp r_k}}\leq C$, $C>0$.
\item for all $j$, $\Exp{r_j\trsp r_j}=\sigma^2 \id_d$. 
\end{enumerate}
\end{nekDef}

The following theorem explains the spectral distributions we see in a number of our numerical investigations in Section 3. 

\begin{theorem}\label{thm:cvToWignerUnderSimpleStructure}
Suppose $M_n$ is a Hermitian $N\times N$ matrix with independent $d\times d$ block entries above the block diagonal. Let $DM_n$ be the block-diagonal of $M_n$. Suppose that $\opnorm{DM_n}/\sqrt{N}=\lo_P(1)$ and that $\rank{\Exp{M_n-DM_n}}=\lo(N)$.  
Suppose the off-diagonal blocks of $M_n$ have $\sigma$-simple structure with $2+\eps$ moments and $\sigma=1$. Suppose further that the cross-covariance between the rows of these off-diagonal blocks is uniformly bounded (i.e independently of $n$). 
Then the LSD of $M_n/\sqrt{\totalSize}$ is the Wigner semi-circle law. The convergence happens a.s.  
\end{theorem}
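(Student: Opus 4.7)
The plan is to reduce the problem, in several steps, to the classical Wigner semicircle law applied to an auxiliary Gaussian matrix. The first step is to strip off the nuisance pieces. Since $\opnorm{DM_n}/\sqrt{N}=\lo_P(1)$, Weyl's inequality implies that the LSD of $M_n/\sqrt{N}$ coincides with that of $(M_n-DM_n)/\sqrt{N}$. Likewise, subtracting $\Exp{M_n-DM_n}$ is a rank-$\lo(N)$ perturbation and therefore, by the rank bound underlying Lemma \ref{lemma:controlDiffStieltjesFiniteRankPerturb}, leaves the Stieltjes transform unchanged in the $N\to\infty$ limit. So it suffices to prove the theorem for a zero-mean, zero-block-diagonal Hermitian matrix $\tildeM_n$ whose above-block-diagonal $d\times d$ blocks are independent with $\sigma$-simple structure and $\sigma=1$.

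Step two is to verify Assumption B1 for $\tildeM_n$. The condition $\sigma=1$ gives each row covariance equal to $\id_d$, and together with the uniform bound on row cross-covariances this lets us apply the first lemma of Section 4 to obtain \eqref{eq:assumptionB12} with $K_i(z)=Cd/v^2$, bounded in $i$. The $2+\eps$ moment assumption and the uniform covariance bound put us in the setting of Corollary \ref{coro:suffCondiB11General}, so \eqref{eq:assumptionB11} and \eqref{eq:assumptionB13} hold with $R_i=\gO(n^{-\eps/(1+\eps)}\wedge n^{-1/2})$. In particular $n^{-1}\sum_i R_i\to 0$, which is the summability required by Theorem \ref{thm:replacementByGaussian}.

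Step three is the Gaussian replacement. Take $GM_n$ to be the Hermitian matrix whose above-block-diagonal entries are i.i.d.\ $\mathcal{N}(0,1)$ and whose block-diagonal is zero. This $GM_n$ trivially satisfies Assumption B1 and has independent strips above the diagonal. By Fact \ref{fact:allThatMattersIsSymmPartOfCrossCovariances}, the Lindeberg matching condition \eqref{eq:keyRequirementLindeberg} depends only on the symmetric parts of the rows' cross-covariances. The $\sigma$-simple structure makes the off-diagonal row cross-covariances of each block of $\tildeM_n$ anti-symmetric (symmetric part zero) and the same-row covariances equal to $\id_d$; the Gaussian companion, with independent standard entries, produces exactly the same symmetric parts. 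Therefore \eqref{eq:keyRequirementLindeberg} holds, and Theorem \ref{thm:replacementByGaussian} yields $|\Exp{\mathsf{m}_n(z)}-\Exp{\mathsf{gm}_n(z)}|\to 0$ for every $z\in\mathbb{C}^+$.

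Step four is to identify the limit and upgrade to almost sure convergence. The matrix $GM_n$ is a standard Wigner matrix with variance-$1$ Gaussian entries, so classical Wigner theory gives $\Exp{\mathsf{gm}_n(z)}\to m_{sc}(z)$, the Stieltjes transform of the semicircle law of variance $1$; combined with step three, $\Exp{\mathsf{m}_n(z)}\to m_{sc}(z)$. Finally, Theorem \ref{thm:concentrationOfStieltjesTransforms} (whose hypotheses are satisfied because the strips above the block diagonal of $\tildeM_n$ are independent and of fixed height $d$) together with Borel--Cantelli gives $\mathsf{m}_n(z)\to m_{sc}(z)$ almost surely, as desired. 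The delicate point of the proof is step three: one must recognize that the $\sigma$-simple structure is precisely what allows the Gaussian companion to be chosen with fully independent entries (hence reducing cleanly to Wigner); any symmetric component of the cross-covariances would force a more intricate Gaussian target whose LSD would not obviously be the semicircle law.
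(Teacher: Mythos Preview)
Your proof is correct and follows essentially the same route as the paper's: strip the block diagonal and the low-rank mean, verify Assumption~B1 via Corollary~\ref{coro:suffCondiB11General} and the lemma preceding it, use the anti-symmetry in the $\sigma$-simple structure together with Fact~\ref{fact:allThatMattersIsSymmPartOfCrossCovariances} to match with an i.i.d.\ Gaussian block matrix in Theorem~\ref{thm:replacementByGaussian}, and finish with Wigner's theorem plus the concentration of Theorem~\ref{thm:concentrationOfStieltjesTransforms}. The only cosmetic difference is that the paper explicitly compares $GM_n$ to a full GOE by bounding the operator norm of the removed block diagonal, whereas you invoke ``classical Wigner theory'' directly for the zero-block-diagonal Gaussian matrix; both are valid.
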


Since we are talking about sequences of random variables, it is important to specify how they are built. 
For our theorem to hold, we assume that the sequence of matrices $M_n$ is constructed by bordering the matrix $M_{n-1}$ with an independent block matrix satisfying our assumptions.   

\begin{proof}
A.s convergence of the spectral distribution is an immediate consequence of Theorem \ref{thm:concentrationOfStieltjesTransforms} and the Borel-Cantelli lemma. See \cite{nekCorrEllipD} for details. 

Define $M_n^{(0)} := M_n-DM_n $. The fact that $\opnorm{DM_n}/\sqrt{N}=\lo_P(1)$ guarantees that spectrally, $M_n/\sqrt{N}$ and $M_n^{(0)}/\sqrt{N}$ are asymptotically equivalent as we discussed earlier. 

Since $\rank{\Exp{M_n^{(0)}}}=\lo(N)$, we see by Lemma \ref{lemma:controlDiffStieltjesFiniteRankPerturb} that $M_n^{(0)}/\sqrt{N}$ and $(M_n^{(0)}-\Exp{M_n^{(0)}})/\sqrt{N}$ are asymptotically spectrally equivalent. Indeed, the modulus of the difference of their Stieltjes transform at $z$ is less than $\rank{\Exp{M_n^{(0)}}}/(\totalSize v)=\lo(1/v)$. 
So the theorem holds for $M_n$ provided we can prove it for $(M_n^{(0)}-\Exp{M_n^{(0)}})$; this latter matrix is still a matrix of independent blocks. However it has mean 0 and its block diagonal is zero. Our preliminary results have been obtained for matrices of this type. 
 Our assumptions guarantee that Assumption B1 is met for $(M_n^{(0)}-\Exp{M_n^{(0)}})$. Therefore we can apply Theorem \ref{thm:replacementByGaussian}. 

Since the rows of the block matrices composing $M_n$ have anti-symmetric cross-covariance, Fact \ref{fact:allThatMattersIsSymmPartOfCrossCovariances} and the discussion that follows it show that in the ``matching step'' of Theorem \ref{thm:replacementByGaussian}, we can use Gaussian matrices with independent rows. 

We note that a $d\times d$ Gaussian matrix with i.i.d ${\cal N}(0,1)$ entries has the same covariance for its rows as our initial model, under our assumptions, does. Assumption B1 is trivially met for a random block matrix with this Gaussian distribution on the blocks. Let us call the corresponding $N\times N$ matrix $GM_n$. 
Theorem \ref{thm:replacementByGaussian} guarantees that this Gaussian equivalent model has the same LSD as $(M_n^{(0)}-\Exp{M_n^{(0)}})$ and therefore the same is true for our initial sequence of matrices $M_n$.

Note that $GM_n/\sqrt{nd}$ is simply a scaled $nd \times nd$ GOE (Gaussian orthogonal ensemble) matrix with $d\times d$ block matrices on the diagonal removed. Call $BD_n$ the block diagonal matrix of a random matrix drawn according to $\totalSize \times \totalSize$ GOE. The norm of this block diagonal matrix $BD_n$ is simply the maximum of the norms of the $d\times d$ matrices on the block diagonal. For each such matrix, the operator norm is bounded by the largest row norm and hence by $d$ times the largest absolute value of the elements of the matrix. Hence, the operator norm of the block diagonal matrix is less than $d$ times the largest  entry (in absolute value) of all these matrices. There are $n d^2$ such elements (corresponding to $n d(d+1)/2$ independent elements), with variance at most $2/(nd)$. 
Hence, using well-known properties of the maximum of independent Gaussian random variables (see e.g \cite{deHaanFerreiraExtremeValueBook06} or \cite{TalagrandSpinGlassesBook03}, p.9), we have
$$
\frac{\opnorm{BD}}{\sqrt{nd}}\leq \sqrt{2/(nd)}\sqrt{2\log(nd^2)} \text{ a.s }\;.
$$
Clearly, the upper bound goes to 0. 
Therefore, $GM_n/\sqrt{nd}$ has the same LSD as $GOE_{nd}/\sqrt{nd}$, where $GOE_{nd}$ is a $N\times N$ random matrix sampled from GOE. Since the LSD of a GOE matrix is the Wigner semi-circle law, we have established the result. 
\end{proof}

\subsubsection{Case of $O(d)$ and $SO(d)$ sub-blocks}
We consider in this subsubsection the case of random matrices with independent random sub-blocks drawn at random uniformly (i.e according to Haar measure) from $O(d)$ and $SO(d)$. In this simple case, some of the results could be obtained by applying work of Girko \cite{GirkoMatrixEquationForBlockRandomMatrices95}. We present the results to illustrate the fact that our conditions are very easy to check.

\paragraph{$\bm{O(d)}$ case}
\begin{fact}
Matrices drawn according to Haar measure on $O(d)$ have  $\sigma$-simple structure with $\sigma=\frac{1}{\sqrt{d}}$.

Therefore, if $M_n$ is a Hermitian $\totalSize\times \totalSize$ block random matrix, with $\totalSize=nd$ and the blocks above the diagonal are drawn i.i.d according to Haar measure on $O(d)$, the LSD of $M_n/\sqrt{\totalSize}$ is the Wigner semi-circle law, scaled by $d^{-1/2}$ - provided the operator norm of the block diagonal of $M_n$ is $o_P(\totalSize^{1/2})$.
\end{fact}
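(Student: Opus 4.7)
The plan is to verify the three defining properties of $\sigma$-simple structure for a Haar-distributed $O(d)$ matrix $B$ with rows $r_1,\ldots,r_d$, and then reduce to Theorem \ref{thm:cvToWignerUnderSimpleStructure} by a rescaling. The moment condition is immediate: entries of $B$ lie in $[-1,1]$, so all moments exist.

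For the second-moment identities, the key tool is right-invariance of Haar measure: for every fixed $Q\in O(d)$, $BQ \equalInLaw B$, so each row satisfies $r_j Q \equalInLaw r_j$. Hence for every $j,k$,
\begin{equation*}
\Exp{r_j\trsp r_k}=Q\trsp \Exp{r_j\trsp r_k}Q\quad\text{for all }Q\in O(d),
\end{equation*}
which forces $\Exp{r_j\trsp r_k}=c_{j,k}\id_d$ by the standard commutant argument (Schur's lemma). When $j=k$, taking traces and using $\norm{r_j}^2=1$ gives $d c_{j,j}=1$, so $\Exp{r_j\trsp r_j}=\tfrac{1}{d}\id_d$, yielding $\sigma=1/\sqrt{d}$. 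When $j\neq k$, the orthogonality of the rows of $B$ gives $\tr(\Exp{r_j\trsp r_k})=\Exp{r_k r_j\trsp}=0$, so $c_{j,k}=0$ and $\Exp{r_j\trsp r_k}=0$, which is trivially anti-symmetric. This establishes $\sigma$-simple structure with $\sigma=1/\sqrt{d}$.

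To deduce the LSD, I need to check the remaining hypotheses of Theorem \ref{thm:cvToWignerUnderSimpleStructure}. Since $-\id_d\in O(d)$, Haar measure is invariant under $B\mapsto -B$, hence $\Exp{B}=0$; therefore $\Exp{M_n-DM_n}=0$ and $\rank{\Exp{M_n-DM_n}}=0=\lo(N)$. The operator-norm bound on the block diagonal is part of the hypothesis, and the cross-covariances of rows of different blocks vanish because the blocks are independent, while within-block cross-covariances are bounded since entries of $B$ are bounded by $1$. Theorem \ref{thm:cvToWignerUnderSimpleStructure} is stated for $\sigma=1$, so I apply it to the rescaled matrix $M_n' := \sqrt{d}\,M_n$, whose blocks $\sqrt{d}B$ have $\sigma$-simple structure with $\sigma=1$. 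The conclusion is that $M_n'/\sqrt{N}$ converges almost surely to the standard Wigner semi-circle law, and since $M_n/\sqrt{N}=(1/\sqrt{d})\,M_n'/\sqrt{N}$, the LSD of $M_n/\sqrt{N}$ is the semi-circle law scaled by $1/\sqrt{d}$.

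There is no real obstacle here—essentially all the analytic work has been packaged into Theorem \ref{thm:cvToWignerUnderSimpleStructure}. The only substantive step is the invariance-plus-trace computation of the row covariances, and the bookkeeping required to pass from $\sigma=1/\sqrt{d}$ to the normalization used in the master theorem via the scalar rescaling by $\sqrt{d}$.
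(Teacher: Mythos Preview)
Your proof is correct. The paper establishes the same second-moment identities by a slightly different route: instead of your right-invariance plus Schur's-lemma commutant argument, it uses left-invariance by specific elements of $O(d)$ --- permutation matrices to obtain exchangeability of rows and columns, and the reflections $\mathsf{O}_j=\id_d-2e_je_j\trsp$ to obtain $(r_k,r_j)\equalInLaw(r_k,-r_j)$ for $k\neq j$ --- and then appeals to Lemma~\ref{lemma:invarianceAndMomentCsq} to read off $\Exp{r_j r_k\trsp}=0$ and $\scov{r_j}=\frac{1}{d}\id_d$. Your approach is more conceptual: the commutant argument delivers the scalar-matrix form of $\Exp{r_j\trsp r_k}$ in one step, and the trace constraints coming from $\norm{r_j}=1$ and $\langle r_j,r_k\rangle=0$ pin down the constants without an auxiliary lemma. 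The paper's hands-on sign-flip argument, on the other hand, ports more transparently to $SO(d)$ (treated immediately afterward), where the real commutant is larger than the scalars when $d=2$ and one has to argue a bit more carefully. Your explicit rescaling by $\sqrt{d}$ to reduce to the $\sigma=1$ hypothesis of Theorem~\ref{thm:cvToWignerUnderSimpleStructure}, and your verification that $\Exp{M_n-DM_n}=0$, are more detailed than what the paper spells out; there the LSD statement is simply asserted as a consequence.
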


\begin{proof}
We denote by ${\cal O}$ a random matrix drawn from $O(d)$ and by $r_k$ its $k$-th row. 
Since for all $k$, $\norm{r_k}=1$, the entries of ${\mathcal O}$ have infinitely many moments. 
When ${\cal O}$ is drawn according to Haar measure, we have by definition, for any given orthogonal $\mathsf{O}$, 
$$
{\cal O}\mathsf{O}\equalInLaw\mathsf{O}{\cal O}\equalInLaw {\cal O}\;.
$$
Taking $\mathsf{O}$ to be a permutation matrix, we see that the columns and rows of ${\cal O}$ are exchangeable. Taking $\mathsf{O}_j=\id_d-2 e_j e_j\trsp$ (where $e_j$ is the $j$-th canonical basis vector), we see that, 
$$
\forall k\neq j\;, (r_k,r_j)\equalInLaw (r_k,-r_j)\;.
$$
By Lemma \ref{lemma:invarianceAndMomentCsq}, this naturally implies that 
$$
\text{if }k\neq j\;,\Exp{r_j r_k\trsp}=0\;.
$$
Since $\norm{r_j}^2=1$ and the columns of ${\cal O}$ - and hence the entries of $r_j$ - are exchangeable, we see that, if $r_j(l)$ is the $l$-th entry of $r_j$,
$$
\forall 1\leq l \leq d\;, \; \; \;\Exp{r_j(l)^2}=\frac{1}{d}\Exp{\norm{r_j}^2}=\frac{1}{d}\;.
$$
Therefore, by Lemma \ref{lemma:invarianceAndMomentCsq}, 
$$
\forall j\;, \scov{r_j}=\frac{1}{d}\id_d\;.
$$
\end{proof}

\paragraph{$\bm{SO(d)}$ case}

\begin{fact}
Matrices drawn according to Haar measure on $SO(d)$ have $\sigma$-simple structure with $\sigma=\frac{1}{\sqrt{d}}$.

Therefore, if $M_n$ is a Hermitian $\totalSize\times \totalSize$ block random matrix, with $\totalSize=nd$ and the blocks above the diagonal are drawn i.i.d according to Haar measure on $SO(d)$, the LSD of $M_n/\sqrt{\totalSize}$ is the Wigner semi-circle law, scaled by $d^{-1/2}$ - provided the operator norm of the block diagonal of $M_n$ is $o_P(\totalSize^{1/2})$.
\end{fact}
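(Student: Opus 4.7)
The plan is to verify the three conditions defining $\sigma$-simple structure for a matrix ${\cal O}$ drawn according to Haar measure on $SO(d)$ with $\sigma = 1/\sqrt{d}$, and then invoke Theorem \ref{thm:cvToWignerUnderSimpleStructure} exactly as in the $O(d)$ fact. The main obstruction is that the $O(d)$ proof used the single-coordinate reflection $\mathsf{O}_j = \id_d - 2 e_j e_j\trsp$, which has determinant $-1$ and therefore does not lie in $SO(d)$. The workaround is to use the double reflection $\mathsf{O}_{j,m} = \id_d - 2 e_j e_j\trsp - 2 e_m e_m\trsp$ for $m \neq j$: this has determinant $+1$, hence lies in $SO(d)$, and still induces useful sign flips on individual rows via left multiplication.

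Moment boundedness (condition (1)) is immediate since the entries of an orthogonal matrix are bounded by $1$. For the diagonal covariance condition (3), Haar right-invariance yields $\Exp{r_j\trsp r_j} = U\trsp \Exp{r_j\trsp r_j} U$ for every $U \in SO(d)$, so this symmetric positive-semidefinite matrix commutes with every element of $SO(d)$. For $d \geq 3$, Schur's lemma applied to the standard irreducible action of $SO(d)$ on $\RR^d$ forces $\Exp{r_j\trsp r_j}$ to be a scalar multiple of $\id_d$; for $d = 2$, the symmetric matrices commuting with $SO(2)$ are likewise exactly the scalar multiples of $\id_2$, so the same conclusion holds. Normalizing by $\Exp{\|r_j\|^2} = 1$ gives $\Exp{r_j\trsp r_j} = (1/d)\id_d$.

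For the cross-covariance condition (2) with $j \neq k$ and $d \geq 3$, pick $m \notin \{j,k\}$ and apply left multiplication by $\mathsf{O}_{j,m}$: on the pair of rows this acts as $(r_j, r_k) \mapsto (-r_j, r_k)$, since row $k$ is fixed. Haar left-invariance on $SO(d)$ then yields $(r_j, r_k) \equalInLaw (-r_j, r_k)$, whence $\Exp{r_j\trsp r_k} = -\Exp{r_j\trsp r_k} = 0$, which is trivially antisymmetric. The case $d = 2$ has no spare index $m$, so I would instead parametrize ${\cal O} = R(\theta)$ with $\theta$ Haar-uniform on $[0, 2\pi)$ and compute directly: $\Exp{r_1\trsp r_2}$ evaluates to $\tfrac{1}{2}\bigl(\begin{smallmatrix} 0 & 1 \\ -1 & 0 \end{smallmatrix}\bigr)$, which is antisymmetric of operator norm $1/2$. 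In both regimes condition (2) holds with a universal constant independent of $n$.

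With $\sigma$-simple structure established, the hypotheses of Theorem \ref{thm:cvToWignerUnderSimpleStructure} are met after the standard rescaling by $\sqrt{d}$ that normalizes $\sigma$ to $1$: the uniform cross-covariance bound required there follows automatically since the constants above do not depend on $n$, and the assumption $\opnorm{DM_n}/\sqrt{\totalSize} = \lo_P(1)$ handles the block-diagonal contribution exactly as in the $O(d)$ fact. Undoing the $\sqrt{d}$ rescaling yields the Wigner semi-circle law scaled by $d^{-1/2}$ as the LSD of $M_n/\sqrt{\totalSize}$, as claimed. I expect the mildly delicate step to be the $d=2$ case of condition (2), since the symmetric-rows trick is not available there and the antisymmetry of the answer must be produced by an explicit computation rather than by a symmetry argument.
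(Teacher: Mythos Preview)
Your proposal is correct and follows essentially the same approach as the paper: both verify $\sigma$-simple structure by using the double reflection $\id_d - 2e_j e_j\trsp - 2e_m e_m\trsp \in SO(d)$ for the cross-covariance vanishing when $d\geq 3$, and both handle $d=2$ by the explicit parametrization ${\cal O}_\theta$ with $\theta$ uniform on $[0,2\pi)$, obtaining the antisymmetric matrix $\Exp{r_1\trsp r_2}=\tfrac{1}{2}\bigl(\begin{smallmatrix}0&1\\-1&0\end{smallmatrix}\bigr)$.

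The one genuine difference is in condition~(3). The paper establishes row and column exchangeability via $3$-cycle permutations (which lie in $SO(d)$ for $d\geq 3$) and then invokes Lemma~\ref{lemma:invarianceAndMomentCsq} together with $\norm{r_j}^2=1$ to deduce $\scov{r_j}=\tfrac{1}{d}\id_d$. You instead use right Haar invariance to get $U\trsp\Exp{r_j\trsp r_j}U=\Exp{r_j\trsp r_j}$ for all $U\in SO(d)$ and then appeal to Schur's lemma (plus, for $d=2$, the observation that symmetric matrices commuting with all of $SO(2)$ are scalar). Your route is slightly more economical since it bypasses the need to set up exchangeability via $3$-cycles; the paper's route has the advantage of reusing the general Lemma~\ref{lemma:invarianceAndMomentCsq} already in place. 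Both are perfectly valid.
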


\begin{proof}
	When ${\cal O}$ is drawn according to Haar measure on $SO(d)$, we have by definition, for any given $\mathsf{O} \in SO(d)$, 
	$$
	{\cal O}\mathsf{O}\equalInLaw\mathsf{O}{\cal O}\equalInLaw {\cal O}\;.
	$$	
	
$\bullet$ \textbf{Case  }$\mathbf{d\geq 3}$ Take $\mathsf{O}_{j,k,l}$ to encode the permutation $(j,k,l)\rightarrow (l,j,k)$. Clearly $\mathsf{O}_{j,k,l}$ is in $SO(d)$. This shows that the columns and the rows of ${\cal O}$ are exchangeable. 
Let $D_{j,k}$ be a diagonal matrix such that 
$$
D_{j,k}(i,i)=\begin{cases}
-1 \text{ if } i=j \text{ or } k\\
1 \text{ otherwise. }
\end{cases}
$$
Clearly $D_{j,k} \in SO(d)$. 
Since $D_{j,k}{\cal O}\equalInLaw {\cal O}$, we have 
$$
\text{ if } l\neq j \text{ and } l\neq k, (r_j,r_l)\equalInLaw (-r_j,r_l)\;.
$$
This shows that 
$$
\Exp{r_j r_l\trsp}=0 \text{ if } l\neq j\;.
$$
The fact that $\scov{r_j}=\id_d/d$ is proven as in the $O(d)$ case.

$\bullet$ \textbf{Case  }$\mathbf{d=2}$

It is clear geometrically that a matrix from $SO(2)$, which is simply a planar rotation, can be written 
$$
\mathcal{O}_{\theta}=\begin{pmatrix}
\cos(\theta) & -\sin(\theta)\\
\sin(\theta) & \cos(\theta)
\end{pmatrix}\;.
$$
When drawn according to Haar measure, $\theta$ is uniform on $[0,2\pi]$. (Geometrically, $\theta$ simply represents the angle by which the first canonical basis vector is rotated.) Hence,
$$
r_1\trsp r_2 =
\begin{pmatrix}
\cos(\theta) \sin(\theta) & \cos^2(\theta)\\
-\sin^2(\theta)&  -\cos(\theta) \sin(\theta)\;.
\end{pmatrix}
$$
So when ${\mathcal O}$ is drawn according to Haar measure, $\Exp{r_1\trsp r_2}$ is anti-symmetric.  
The fact that $\scov{r_i}=\frac{1}{2}\id_2$, $i=1,2$, is proven similarly.

\end{proof}

\subsubsection{Measures on $Gl(d,\RR)$ and $Sl(d,\RR)$}
We call ${\cal D}_m$ the set of diagonal matrices with $D_{ii}=1$ except for exactly $m$ indices for which $D_{ii}=-1$. 
\begin{fact}\label{fact:propertiesCrossCovarianceMatricesUnderGenericAssumptions}
Suppose that the $d\times d$ random matrix $B$ is such that it has the singular value decomposition 
$$
B=UDV\trsp\;,
$$
where $U$, $D$ and $V$ are independent. Suppose further that $U$ and $V$ (which are of course orthonormal) have laws that are invariant under the action of any permutations and any diagonal matrix in ${\cal D}_m$, $1\leq m \leq 2$. 
Then, if the entries of $D$ have $2+\eps$ moments, $B$ has $\sigma$-simple structure with 
$$
\sigma^2=\Exp{\trace{B\trsp B}}/d^2=\Exp{\trace{D^2}}/d^2\;,
$$
if $m=1$. If $m=2$, the same statement is true provided $d\geq 3$.
\end{fact}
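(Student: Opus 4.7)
The plan is to exploit the $SVD$ representation $B=UDV^{\trsp}$ together with the independence of the three factors to reduce everything to computations on $U$ (and symmetrically on $V$), where the assumed invariances produce the required cancellations. Throughout I write $r_j$ for the $j$-th row of $B$ and $U_j$ for the $j$-th row of $U$; then $r_j = U_j D V^{\trsp}$ and hence
\begin{equation*}
r_j^{\trsp} r_k \;=\; V D\, U_j^{\trsp} U_k \,D\, V^{\trsp}.
\end{equation*}
Since $U$, $D$, $V$ are independent, conditioning first on $(D,V)$ gives
\begin{equation*}
\Exp{r_j^{\trsp} r_k} \;=\; \Exp{V D\,A_{jk}\,D V^{\trsp}}, \qquad A_{jk} := \Exp{U_j^{\trsp} U_k}.
\end{equation*}

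First I would dispatch the moment condition. Because $U$ and $V$ are orthogonal we have $|U_{ac}|,|V_{bc}|\le 1$, so $|B_{ab}|\le \sum_c|D_{cc}|$. By Jensen (or a convexity estimate) $|B_{ab}|^{2+\eps}\le d^{1+\eps}\sum_c |D_{cc}|^{2+\eps}$, so the $2+\eps$ moment hypothesis on $D$ transfers to the entries of $B$.

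Next I would show that $A_{jk}=0$ whenever $j\ne k$, which makes $\Exp{r_j^{\trsp} r_k}=0$ (trivially anti-symmetric and bounded in operator norm). For $m=1$, use the diagonal matrix in $\cal{D}_1$ with $-1$ in position $j$: left-multiplying $U$ by it flips the sign of $U_j$ while leaving $U_k$ intact, so the invariance hypothesis yields $\Exp{U_{ja}U_{kb}}=-\Exp{U_{ja}U_{kb}}=0$ for every $a,b$. For $m=2$ and $d\ge 3$, pick any $l\notin\{j,k\}$ (which exists precisely because $d\ge 3$) and left-multiply by the element of $\cal{D}_2$ with $-1$'s in positions $j$ and $l$: this flips $U_j$ and $U_l$ but fixes $U_k$, giving the same cancellation. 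This is the step where the restriction $d\ge 3$ in the $m=2$ case enters in an essential way.

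Finally I would compute $A_{jj}$ and the outer expectation. Using the invariance of $U$ under right multiplication by $\cal{D}_m$ (which flips signs of individual columns in the $m=1$ case, or two columns at once in the $m=2$ case with $d\ge 3$), the off-diagonal entries $\Exp{U_{ja}U_{jb}}$ vanish, so $A_{jj}$ is diagonal; combining with column-exchangeability of $U$ (from invariance under permutations) and $\|U_j\|^{2}=1$ gives $A_{jj}=\frac{1}{d}\id_d$. Therefore
\begin{equation*}
\Exp{r_j^{\trsp} r_j}\;=\;\frac{1}{d}\,\Exp{V D^2 V^{\trsp}}.
\end{equation*}
Applying exactly the same argument to $V$ shows $\Exp{V D^{2} V^{\trsp}}=\frac{1}{d}\Exp{\tr(D^{2})}\id_d$ (off-diagonal entries vanish by the sign-flip invariance of $V$; diagonal entries are equal by exchangeability and sum to $\Exp{\tr(D^2)}$). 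Combining, $\Exp{r_j^{\trsp} r_j}=\sigma^{2}\id_d$ with $\sigma^{2}=\Exp{\tr(D^{2})}/d^{2}=\Exp{\tr(B^{\trsp}B)}/d^{2}$, as claimed.

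The only delicate step is keeping track of which sign-flip invariance (and on which side) kills which covariance, and noticing that the $m=2$ argument needs a ``spectator'' index $l$ distinct from both $j$ and $k$, which is precisely why the statement requires $d\ge 3$ in that case; everything else is bookkeeping.
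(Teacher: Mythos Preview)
Your argument is correct and reaches the same conclusion, but the route differs from the paper's. The paper does not open up the SVD at all: it simply observes that the invariances of $U$ and $V$ transfer to distributional invariances of $B$ itself (namely $PB\equalInLaw B$, $BP\equalInLaw B$, $\mathsf{D}B\equalInLaw B$, $B\mathsf{D}\equalInLaw B$ for permutations $P$ and sign matrices $\mathsf{D}\in\mathcal{D}_m$), and then invokes the general Lemma~\ref{lemma:invarianceAndMomentCsq} to read off $\Exp{r_j r_k^{\trsp}}=0$ and $\Exp{r_j r_j^{\trsp}}=\sigma^2\id_d$. Your approach instead conditions on $(D,V)$, reduces to the inner expectation $A_{jk}=\Exp{U_j^{\trsp}U_k}$, kills or diagonalizes it via the invariances of $U$, and then handles the outer $\Exp{VD(\cdot)DV^{\trsp}}$ layer by the same reasoning applied to $V$. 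The paper's version is more modular (it isolates the invariance-to-covariance step in a reusable lemma), while yours is more self-contained and makes the role of independence of $U,D,V$ explicit.

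One small point worth being aware of: when you argue that $A_{jj}$ is diagonal you invoke invariance of $U$ under \emph{right} multiplication by elements of $\mathcal{D}_m$, and similarly column exchangeability of $U$. The paper's proof gets by with only \emph{left} invariance of $U$ and of $V$ (left sign-flips on $U$ handle the cross-covariances $\Exp{r_jr_k^{\trsp}}$; left sign-flips on $V$ handle the off-diagonal of $\scov{r_j}$ via $B\mathsf{D}\equalInLaw B$). The hypothesis ``invariant under the action'' is admittedly ambiguous, and in every application in the paper both one-sided invariances hold, so this is not a genuine gap; but if you want your argument to match the paper's minimal reading of the hypothesis, you can skip diagonalizing $A_{jj}$ and instead push the sign-flip argument for the off-diagonal of $\Exp{r_j^{\trsp}r_j}$ onto $V$ in the outer expectation.
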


\begin{proof}
	Our assumptions on the entries of $D$ guarantee that the entries of $B$ have $2+\eps$ moments. 
Let $P$ be a permutation. Since $PU\equalInLaw U$, it is clear that 
$$
PB\equalInLaw B\;.
$$
Hence the rows of $B$ are exchangeable. By a similar argument applied to $BP$, we see that the columns of $B$ are exchangeable. 

Suppose $m=1$. Let $\mathsf{D}_j$ be in ${\cal D}_1$ with $\mathsf{D}_j(j,j)=-1$. Since $D_jB\equalInLaw B$, we see that for any $k\neq j$, $(r_j,r_k)\equalInLaw (-r_j,r_k)$. 
When $m=2$ and $d=3$, we arrive at the same conclusion by using a matrix $\mathsf{D}$ in ${\cal D}_2$ such that $\mathsf{D}(j,j)=-1$, $\mathsf{D}(k,k)=1$ and $\mathsf{D}(l,l)=-1$ for $l\neq k$ nor $j$. Such a matrix exists by assumption. 
This implies that under our assumptions (see Lemma \ref{lemma:invarianceAndMomentCsq} for details)
$$
\Exp{r_j r_k\trsp}=0\;, \text{ when } j\neq k\;, \text{ and }\Exp{r_j}=0\;, \text{ for all } j\;.
$$

Now by exchangeability of the columns of $B$, we see that the diagonal of $\scov{r_j}$ is proportional to $\id_p$, with proportionality constant $\sigma^2=\Exp{\norm{r_j}^2}/d=\Exp{\trace{B B\trsp}}/d^2$, the latter equality coming from exchangeability of the rows of $B$. 

Suppose $m=1$.  Let $\mathsf{D}_j$ be in ${\cal D}_1$ with $\mathsf{D}_j(j,j)=-1$. Since $B \mathsf{D}_j \equalInLaw B$, we see that for any $k\neq j$, if $c_j$ denotes a generic column of $B$, $(c_j,c_k)\equalInLaw (-c_j,c_k)$. This implies that the off-diagonal elements of $\scov{r_k}$ are equal to 0. The case of $m=2$ is treated as above and we have shown the lemma. 
\end{proof}

We have the following corollaries. The proof of this corollary is immediate - owing to elementary facts about Wishart matrices for instance (see \cite{EatonBookReprint83} or \cite{anderson03}). 

\begin{corollary}
Suppose that $B$ is $d\times d$ with i.i.d {\cal N}(0,1) entries. Then $B\in Gl(d,\RR)$ with probability 1. Furthermore, it satisfies the assumptions of Fact \ref{fact:propertiesCrossCovarianceMatricesUnderGenericAssumptions} with $m=1$.
\end{corollary}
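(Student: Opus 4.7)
The plan is to dispatch the two claims separately, leaning on the orthogonal invariance of the Ginibre ensemble. For invertibility, $\{B\in\RR^{d\times d}:\det(B)=0\}$ is the zero set of a nontrivial polynomial in $d^2$ variables and thus a Lebesgue-null algebraic variety; since the joint law of the entries of $B$ is absolutely continuous with respect to Lebesgue measure, $\bP(B\in Gl(d,\RR))=1$.

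For the SVD structure, the density of $B$ is proportional to $\exp(-\tr(B\trsp B)/2)$, a function of $\tr(B\trsp B)$ alone, hence invariant under $B\mapsto O_1 B O_2\trsp$ for any $O_1,O_2\in O(d)$. This gives $O_1 B O_2\trsp\equalInLaw B$. Write the SVD $B=UDV\trsp$ with $D=\diag(s_1,\ldots,s_d)$ and $s_1\ge\cdots\ge s_d\ge 0$. Standard facts about the Wishart ensemble (see e.g.\ \cite{anderson03,EatonBookReprint83}) give that the eigenvalues of $B\trsp B$ are a.s.\ simple and strictly positive, so the SVD is unique up to simultaneous sign flips of matched columns of $U$ and $V$; I would fix these signs by a measurable convention (say, the first nonzero entry of each column of $U$ is positive) to obtain a measurable factorization $B\mapsto(U(B),D(B),V(B))$. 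Then $(O_1 U)D(O_2 V)\trsp$ is also an SVD of $O_1 B O_2\trsp$, and after matching the sign convention one deduces from $O_1 B O_2\trsp\equalInLaw B$ the distributional identity $(O_1 U, D, O_2 V)\equalInLaw (U, D, V)$ for all $O_1, O_2\in O(d)$. Letting $O_1$ vary with $O_2=\id_d$ (and symmetrically) shows $U$ and $V$ are Haar-distributed on $O(d)$ and that $U$, $V$, $D$ are jointly independent.

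Since permutation matrices and elements of ${\cal D}_1$ sit inside $O(d)$, the required invariance of the laws of $U$ and $V$ under left multiplication by any permutation or by any element of ${\cal D}_1$ is automatic. The entries of $D$ are singular values of $B$, hence bounded by the Frobenius norm $\|B\|_F$, a $\chi$-distributed random variable with moments of all orders; in particular the entries of $D$ have $2+\eps$ moments. This verifies the remaining hypothesis of Fact \ref{fact:propertiesCrossCovarianceMatricesUnderGenericAssumptions} with $m=1$. I view the sign/ordering ambiguity in the SVD as the only subtle step, which is why the a.s.\ simplicity and positivity of the singular values is essential: it allows a clean measurable sign convention so that the invariance deduction $(O_1 U, D, O_2 V)\equalInLaw (U,D,V)$ goes through unambiguously.
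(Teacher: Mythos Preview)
Your overall approach—bi-orthogonal invariance of the Gaussian density plus the SVD—is exactly what the paper has in mind (it simply cites Wishart theory in \cite{EatonBookReprint83,anderson03} without details), and your treatments of invertibility and of the moments of $D$ are fine. But the sign-convention step has a genuine gap. With a \emph{deterministic} measurable convention such as ``first nonzero entry of each column of $U$ positive,'' the resulting $U$ is supported on a subset of $O(d)$ of Haar measure $2^{-d}$, so it is \emph{not} Haar on $O(d)$; in particular its law is not invariant under left multiplication by elements of ${\cal D}_1$, which is precisely what Fact~\ref{fact:propertiesCrossCovarianceMatricesUnderGenericAssumptions} requires. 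Relatedly, what actually follows from $O_1BO_2\trsp\equalInLaw B$ after re-imposing the convention is $(O_1U\Delta,\,D,\,O_2V\Delta)\equalInLaw(U,D,V)$ for a sign matrix $\Delta\in{\cal D}$ that depends on $(O_1,U)$, not the cleaner identity $(O_1U,\,D,\,O_2V)\equalInLaw(U,D,V)$ you wrote; the latter would in fact contradict the support constraint just mentioned. So your deduction that $U$ and $V$ are Haar and that $U,D,V$ are jointly independent does not go through as written.

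The standard fix (and the content of the references the paper cites) is to randomize rather than fix signs. The cleanest version: take independent Haar $O_1,O_2\in O(d)$, independent of $B$; then $\tilde{B}:=O_1\trsp BO_2\equalInLaw B$ and $\tilde{B}$ is independent of $(O_1,O_2)$. Pick any measurable SVD $\tilde{B}=\tilde{U}D\tilde{V}\trsp$ and set $U:=O_1\tilde{U}$, $V:=O_2\tilde{V}$, so that $B=UDV\trsp$. Conditioning on $\tilde{B}$ (hence on $\tilde{U},D,\tilde{V}$), the matrices $U$ and $V$ are Haar on $O(d)$ and independent of each other; since this conditional law does not depend on $\tilde{B}$, the triple $(U,D,V)$ is jointly independent with $U,V$ Haar. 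This now verifies the hypotheses of Fact~\ref{fact:propertiesCrossCovarianceMatricesUnderGenericAssumptions} with $m=1$.
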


Next we discuss the case of $Sl(d,\RR)$.
\begin{corollary}\label{coro:SLdCase}
Let $G$ be a $d\times d$ matrix with i.i.d ${\cal N}(0,1)$ entries. Suppose 
$$
B=\frac{\widetilde{G}}{|\det(G)|^{1/d}}\;,
$$
where $\widetilde{G}=G/\sgn(\det(G))$ if $d$ is odd and $\widetilde{G}=G$ except that one column of $G$ - picked uniformly at random - is replaced by its opposite when $d$ is even. 
Then $B \in Sl(d,\RR)$ for any $d\geq 1$ almost surely. Furthermore, for $d\geq 3$, $B$ satisfies the assumptions of Fact  \ref{fact:propertiesCrossCovarianceMatricesUnderGenericAssumptions}.

Therefore, if $M_n$ is a Hermitian $\totalSize\times \totalSize$ block random matrix, with $\totalSize=nd$ and the blocks above the diagonal are drawn i.i.d with the same law as $B$ and $d\geq 3$, the LSD of $M_n/\sqrt{\totalSize}$ is a scaled Wigner semi-circle law - provided the operator norm of the block diagonal of $M_n$ is $o_P(\totalSize^{1/2})$. 
\end{corollary}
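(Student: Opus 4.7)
The plan is to verify that the random block $B$ has $\sigma$-simple structure and then to invoke Theorem \ref{thm:cvToWignerUnderSimpleStructure} (with the variance rescaling needed to reduce to $\sigma=1$). The claim $B\in Sl(d,\RR)$ is an almost-sure determinant computation: when $d$ is odd, $\det(\widetilde G)=\sgn(\det G)^{-d}\det G=\sgn(\det G)\det G=|\det G|$, so $\det(B)=|\det G|/(|\det G|^{1/d})^d=1$; when $d$ is even, negating a single column of $G$ gives $\det(\widetilde G)=-\det G$ and hence $|\det B|=1$, with the sign depending on the random column flip. In particular $B$ is almost surely invertible.

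For the $(2+\eps)$-moment hypothesis, write $B_{ij}=\widetilde G_{ij}/|\det G|^{1/d}$ and use the $QR$ factorization of $G$ to obtain $|\det G|^2=\prod_{i=1}^d R_{ii}^2$ with independent $R_{ii}^2\sim\chi^2_{d-i+1}$. The binding $\chi^2_1$ factor at $i=d$ gives $\Exp{|\det G|^{-s}}<\infty$ if and only if $s<1$. H\"older's inequality, together with the infinite Gaussian moments of $\widetilde G_{ij}$, then yields $\Exp{|B_{ij}|^{2+\eps}}<\infty$ precisely when $(2+\eps)/d<1$, which is the source of the hypothesis $d\geq 3$. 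The same input gives $\sigma^2=\Exp{\trace{B\trsp B}}/d^2=\Exp{\trace{D^2}}/d^2<\infty$, where $D=\diag(\sigma_1,\ldots,\sigma_d)$ is the diagonal matrix of singular values of $G$.

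The main step is the verification of the cross-covariance identities. One might try to route through the SVD $G=UDV\trsp$ (with $U,V$ independent Haar on $O(d)$) and check the hypotheses of Fact \ref{fact:propertiesCrossCovarianceMatricesUnderGenericAssumptions}; however, passing to an SVD of $B$ introduces a sign coupling between the left and right orthogonal factors (via $\sgn(\det G)=\sgn(\det U)\sgn(\det V)$ for odd $d$, and via the random column flip for even $d$), so a literal application of the Fact needs a small extension. A cleaner route, which I expect to be the least painful and the part requiring most care, is to argue the two covariance identities directly from the symmetries of $G$. For $j\neq k$ and any $(a,b)$, replace $G$ by $G''$ obtained by negating row $j$ (and in the even-$d$ case couple the random column index by $J''=J$); a short computation of $\widetilde G(G'')$ shows that exactly one of $B_{ja}$ and $B_{kb}$ changes sign while the other is preserved (which of them depends on the parity of $d$), so $\Exp{B_{ja}B_{kb}}=-\Exp{B_{ja}B_{kb}}=0$. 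The analogous column-negation argument gives $\Exp{B_{ja}B_{jb}}=0$ for $a\neq b$, and row/column exchangeability inherited from $G$ forces $\Exp{B_{ja}^2}=\sigma^2$ uniformly and $\Exp{B}=0$. With $\sigma$-simple structure established, Theorem \ref{thm:cvToWignerUnderSimpleStructure}, whose block-diagonal hypothesis on $M_n$ is assumed in the statement, yields the scaled Wigner semi-circle LSD.
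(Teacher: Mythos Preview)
Your proposal is correct and the overall architecture---moment bound via Bartlett/QR plus H\"older, then $\sigma$-simple structure via invariance, then Theorem~\ref{thm:cvToWignerUnderSimpleStructure}---matches the paper's. The moment computation is essentially identical to the paper's (Bartlett decomposition giving $(\det G)^2=\prod_i\chi_i^2$, the $\chi_1^2$ factor forcing $k<d$ for $k$-th moments, hence $d\geq 3$).

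Where you diverge is in the invariance step. The paper does route through Fact~\ref{fact:propertiesCrossCovarianceMatricesUnderGenericAssumptions} via the SVD $B=U(B)D(B)V(B)\trsp$, and handles exactly the sign-coupling issue you flagged by observing that although $U(B)$ and $V(B)$ are no longer independent (their determinants are linked by $\det(U(B)V(B))=1$), the specific operations used in the proof of Fact~\ref{fact:propertiesCrossCovarianceMatricesUnderGenericAssumptions}---multiplication by matrices in ${\cal D}_2$ and by cyclic permutations---all have determinant $1$, so the required distributional invariances survive. In other words, the paper reruns the proof of Fact~\ref{fact:propertiesCrossCovarianceMatricesUnderGenericAssumptions} with $m=2$ and $SO(d)$-style permutation arguments.

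Your alternative---working directly with the $O(d)$ symmetries of $G$ (single row or column negations, row/column permutations) and tracking how the sign correction in $\widetilde G$ interacts with them---is a legitimate and arguably cleaner substitute: it avoids the SVD entirely and yields the stronger conclusion $\Exp{r_j\trsp r_k}=0$ rather than merely anti-symmetric. The cost is the parity case split you already noted (in odd $d$, negating row $j$ fixes $B_{ja}$ and flips $B_{kb}$; in even $d$ it is the reverse), and a small care point that for odd $d$ a row transposition flips the global sign of $B$, so ``exchangeability'' holds only up to sign---harmless for the second-moment identities you need. Both approaches deliver $\sigma$-simple structure with $\sigma^2=\Exp{\trace{B\trsp B}}/d^2$, after which Theorem~\ref{thm:cvToWignerUnderSimpleStructure} closes the argument.
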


To get a finer understanding of this problem - especially for $d=2$ - we compute the law of the squared singular values of $B$ in Section \ref{subsec:CompsSLdAppendix}.
It turns out that in the case of $Sl(2,\RR)$, the largest eigenvalue of $B\trsp B$ has Cauchy-like tail. Therefore, the matrix $D$ of Fact \ref{fact:propertiesCrossCovarianceMatricesUnderGenericAssumptions} does not have 2 moments. The situation is therefore structurally different from the other problems we have investigated in this paper. In particular, in Figure \ref{fig:Sld2-1} and Figure \ref{fig:SldQQplot} in Section \ref{Section:NumericalSldR}, the spectral behavior of $Sl(2,\RR)$ is dramatically different from the others. We do not undertake here a specific characterization of the limiting distribution in this case as this issue is quite tangential to the main aims of the paper.

\begin{proof}
Let us write the singular value decompositions of $G$ and $B$ as $G=U(G)D(G)V(G)\trsp$ and $B=U(B)D(B)V(G)\trsp$.
Since $d\geq 3$, we have seen that $G$ satisfies the assumptions on $U(G)$ and $V(G)$ we made in Fact \ref{fact:propertiesCrossCovarianceMatricesUnderGenericAssumptions}. However, the $U(B)$ and $V(B)$ - though very closely related to $U(G)$ and $V(G)$ -  are not independent anymore, since $B\in Sl(d,\RR)$ implies that $\det(U(B)V(B))=1$. Because $d\geq 2$, our arguments involving matrices in ${\cal D}_2$ are still valid (matrices in ${\cal D}_2$ have determinant 1). Our argument involving permutation now require permutation matrices containing a cycle - as we did in the case of $SO(d)$. So our exchangeability arguments actually apply here 
and the only question we have to grapple with is that of the number of moments of the entries of $B$. 

We recall that by using Bartlett's decomposition \cite{muirhead82}, we see that, for independent $\chi_i^2$ random variables, 
$$
(\det(G))^2=\prod_{i=1}^{d} \chi_i^2\;.
$$
Recall that the density $f_1$ of $\chi_1^2$ is such that $f_1(x)\sim x^{-1/2}$ at 0 and $f_p$ the density of $\chi^2_p$ is such that $f_p(x)\sim x^{p/2-1}$ at 0. So we see that 
$$
\Exp{\frac{1}{|\det(G)|^{p/d}}}<\infty
$$
provided $\Exp{(\chi_1^2)^{-p/(2d)}}<\infty$ i.e $1/2+p/2d<1$ or $d>p$. 
By Holder's inequality, if $p\geq 1$ and $q=p/(p-1)$,
$$
\Exp{|B_{i,j}|^{k}}\leq \Exp{|G_{i,j}|^{kq}}^{1/q}\Exp{|\det(G)|^{-kp/d}}^{1/p} \;.
$$
So the entries of $B$ have $k$ moments provided $d>kp$ for some $p>1$. In other words, if $k<d$, the entries of $B$ have $k$ moments. 
We conclude that when $d\geq 3$, the assumptions of Fact  \ref{fact:propertiesCrossCovarianceMatricesUnderGenericAssumptions} are satisfied. 

\end{proof}

\subsubsection{Class averaging algorithm}\label{subsubsec:classAveragingAlgo}
We discuss in details in Appendix \ref{app:subsec:vdmComps} various properties of the elements of block matrices arising in the class averaging algorithm in the null case considered in this paper. For the non-null case, we refer the reader to the subsequent paper \cite{NEKHautieng2014CGLRobust}. 
One quantity of interest in this algorithm is 
$$
g_{ij}=\argmin_{g \in SO(2)}\|Z_i-g\circ Z_j\|_2^2\;,
$$
where $Z_i$'s is the data, viewed as a real-valued function on $\mathbb{R}^2$. 
Here is a summary of our results in the null case, where $Z_i$'s are pure noise:
\begin{lemma}
\begin{enumerate}
\item Suppose that $Z_i$ and $Z_j$ are independent. Suppose that each random variable has a distribution that is invariant under the action of $SO(2)$. Then $g_{ij}$ and $Z_i$ are independent and so are $g_{ij}$ and $Z_j$. Furthermore, $g_{ij}$ is uniformly distributed on $SO(2)$. 
\item Suppose that $Z_i$, $Z_j$ and $Z_k$ are independent, each random variable having a distribution that is invariant under the action of $SO(2)$. Then $g_{ij}$ and $g_{ik}$ are independent, and so are $g_{ij}$ and $g_{jk}$. Furthermore, the random variables $\{g_{ij}\}_{j=1}^n$ are jointly independent.
\end{enumerate}	
\end{lemma}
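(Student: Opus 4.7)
The plan is to exploit the $SO(2)$-equivariance of the map $F(Z_i,Z_j):=\argmin_{g\in SO(2)}\|Z_i-g\circ Z_j\|_2^2$. Since rotations act isometrically on $L^2$, the substitutions $g'=gh$ and $g'=h^{-1}g$ in the minimization give, for every $h\in SO(2)$,
$$
F(Z_i,h\circ Z_j)=F(Z_i,Z_j)\,h^{-1}\,,\qquad F(h\circ Z_i,Z_j)=h\,F(Z_i,Z_j)\,.
$$
This is the only structural fact I need; everything else comes from an invariance-plus-averaging argument combined with conditioning.

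For part (1), I would condition on $Z_j=z_j$. Then $g_{ij}=F(Z_i,z_j)$ depends only on $Z_i$, and the $SO(2)$-invariance of the law of $Z_i$ says $h\circ Z_i\equalInLaw Z_i$ for every $h\in SO(2)$. Combining with the equivariance on the left, I get
$$
h\,g_{ij}\;\equalInLaw\; g_{ij}\quad\text{conditionally on }Z_j=z_j\,,
$$
for every $h\in SO(2)$. Since the Haar (uniform) measure is the unique left-invariant probability measure on $SO(2)$, the conditional law of $g_{ij}$ is Haar and, in particular, independent of $z_j$. Integrating out $Z_j$ shows $g_{ij}$ is Haar-distributed and independent of $Z_j$. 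The independence of $g_{ij}$ and $Z_i$ is symmetric: condition on $Z_i=z_i$, use the right-equivariance $F(z_i,h\circ Z_j)=F(z_i,Z_j)h^{-1}$ together with the $SO(2)$-invariance of $Z_j$, and conclude right-invariance of the conditional law, hence Haar.

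Part (2) then follows by a conditional-independence step. For $g_{ij}\perp g_{ik}$, I condition on $Z_i=z_i$: the random variables $g_{ij}=F(z_i,Z_j)$ and $g_{ik}=F(z_i,Z_k)$ are measurable functions of the independent inputs $Z_j,Z_k$, so they are conditionally independent; by part (1) each conditional marginal is Haar on $SO(2)$ and does not depend on $z_i$, so the conditional joint law is the product Haar measure on $SO(2)^2$. Integrating out $Z_i$ gives the unconditional statement. For $g_{ij}\perp g_{jk}$, I condition on $Z_j$ instead and run the same argument. For the joint independence of $\{g_{ij}\}_{j=1}^n$ (with $i$ fixed, $j\neq i$), I condition on $Z_i$: the family $\{F(z_i,Z_j)\}_{j\neq i}$ consists of functions of the independent $Z_j$'s, hence is conditionally jointly independent, and by part (1) the conditional joint law is the product Haar measure on $SO(2)^{n-1}$, independent of $z_i$.

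The main thing to verify, and essentially the only obstacle, is that the argmin is well-defined so that the pointwise equivariance identity makes sense. Under mild nondegeneracy of the noise distribution (e.g.\ absolute continuity, so that ties occur with probability zero), the minimizer is almost surely unique, and one can pick a measurable selection on the complement of the null set without disturbing any of the distributional equalities above. Beyond this measurable-selection bookkeeping, the proof is purely a group-action/invariance manipulation.
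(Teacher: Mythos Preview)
Your proof is correct and follows essentially the same route as the paper: exploit the $SO(2)$-equivariance of the argmin, condition on one of the $Z$'s, use rotational invariance of the other to force the conditional law to be Haar (hence independent of the conditioning variable), and then for part~(2) condition on the shared $Z_i$ (or $Z_j$) and use that the remaining $g$'s are functions of independent inputs with Haar conditional marginals. The only cosmetic differences are that the paper conditions on $Z_i$ first rather than $Z_j$, and it does not explicitly address the measurable-selection/argmin-uniqueness point you raise.
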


These results are shown in Appendix \ref{app:subsec:vdmComps}, specifically in the proofs of Lemmas \ref{lemma:distgijNullCase} and \ref{lemma:independencegijandgik}.

\section{Numerical Experiments}\label{sec:numericalWork}
We now present some numerical work to investigate the agreement between our theoretical results and simulations in ``reasonable'' dimensions. 
We show three simulations for the block random matrix and two simulations related to the class averaging algorithm. 

\subsection{Random block matrix with independent blocks}
We start from showing how the GCL behaves in the setup with independent blocks.
\subsubsection{Random orthogonal group $SO(d)$ and $O(d)$ with Haar measure}
Consider a $n\times n$ symmetric block matrix $R_{SO(d),n,\text{Haar}}$ with $d\times d$ entries so that its $(i,j)$-th entry, $i<j$, for all $i,j=1,\ldots,n$,
is uniformly sampled according to the Haar measure on $SO(d)$. We mention that the QR decomposition, at least as implemented in Matlab, leads to random matrices which are not distributed according to Haar measure, and needs to be corrected in order to numerically obtain the uniform samples on $SO(d)$ \cite{Mezzadri:2006}. 

The histogram of $R_{SO(d),n,\text{Haar}}$'s spectrum is shown in Figure \ref{fig:SOd}, when $n=1000$ and $d=2,3$. We also show the QQplot of the eigenvalues of $R_{SO(d),n,\text{Haar}}$ versus the eigenvalues of symmetric Gaussian random matrix of size $dn\times dn$, a good approximation to the Wigner semi-circle law. It is clear that the spectral distribution of $R_{SO(d),n,\text{Haar}}$ is a scaled semi-circle law, as predicted by our theory.

\begin{figure}[h]	
\begin{center}
\includegraphics[width=0.23\textwidth]{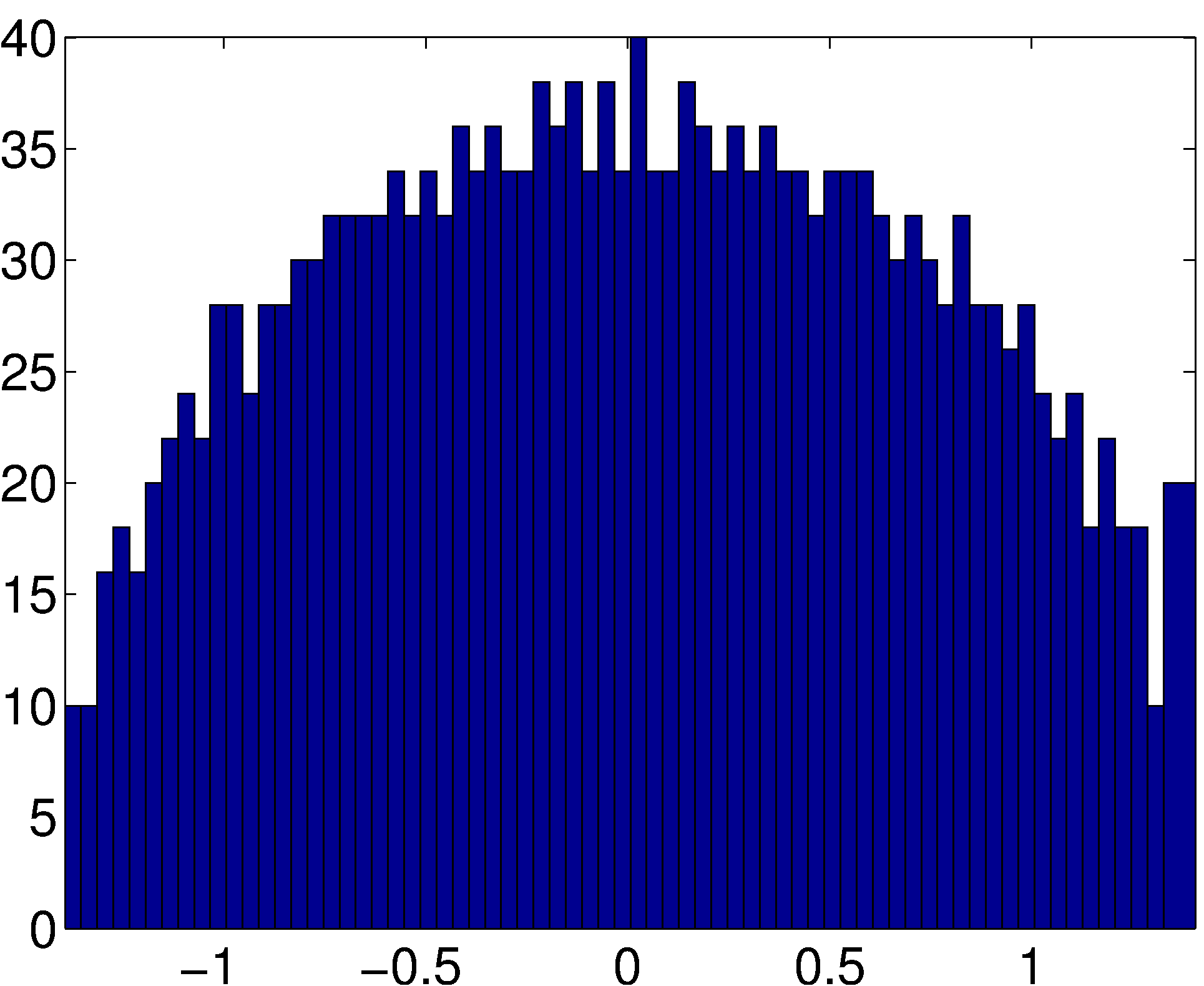}
\includegraphics[width=0.23\textwidth]{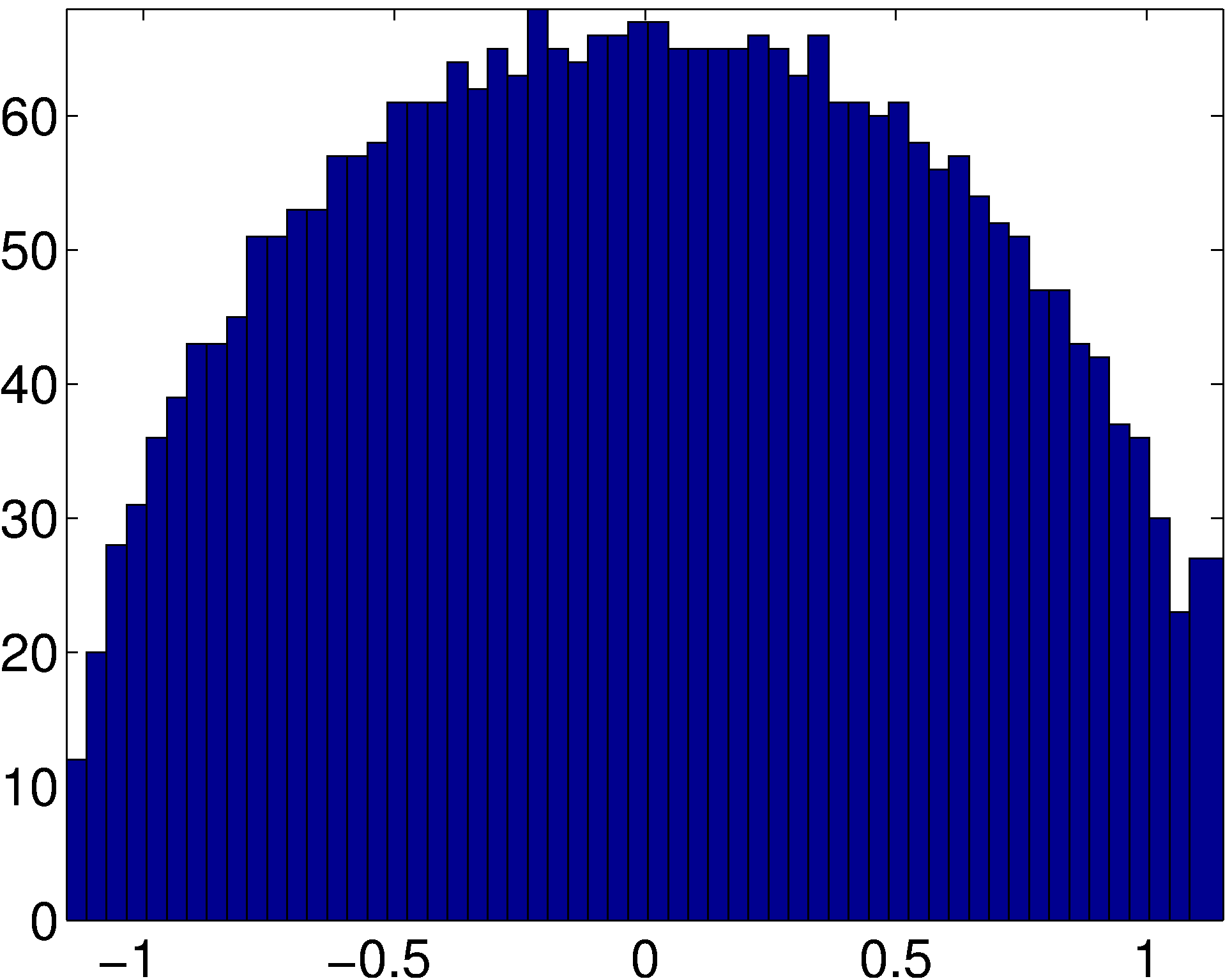}
\includegraphics[width=0.23\textwidth]{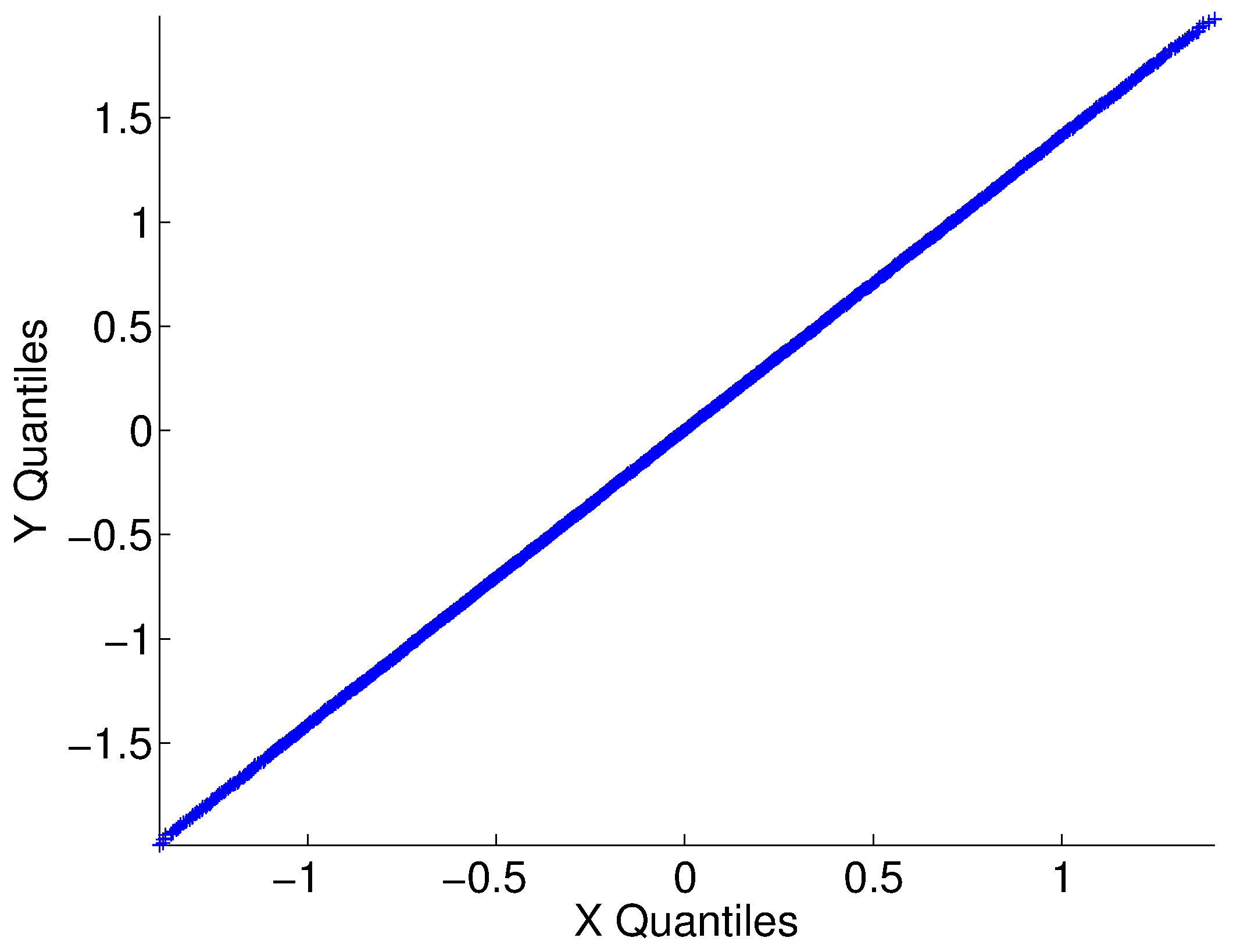} 
\includegraphics[width=0.23\textwidth]{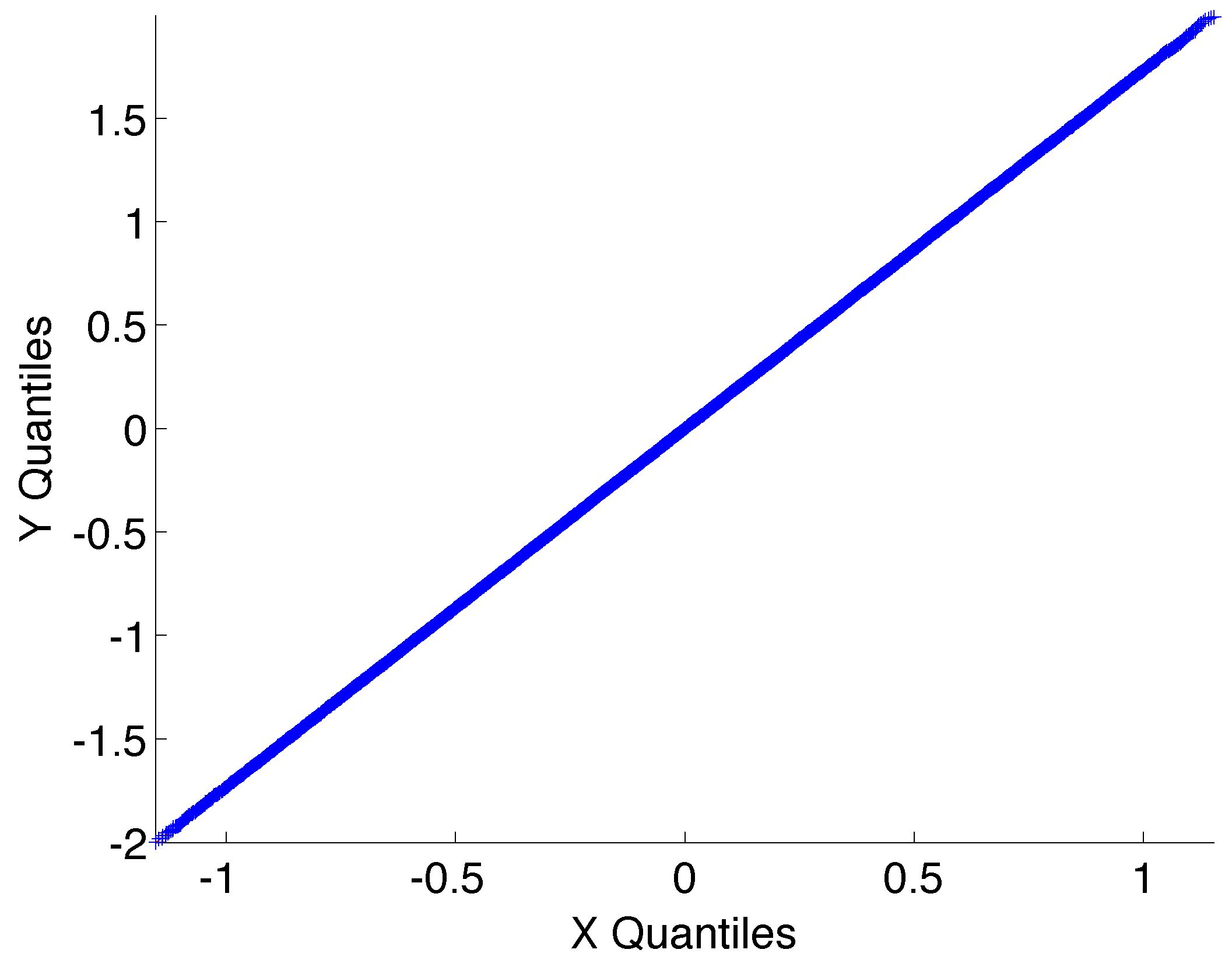}
\end{center}
\caption{\emph{Histogram of the eigenvalues of $R_{SO(d),n,\text{Haar}}$ with entries sampled from $SO(d)$ when $(n,d)=(1000,2)$ (left) and $(1000,3)$ (left middle) and QQplot of the eigenvalues of $R_{SO(d),n,\text{Haar}}$ versus the eigenvalues of symmetric Gaussian random matrix of size $dn\times dn$ when $(n,d)=(1000,2)$ (right middle) and $(1000,3)$ (right).}}
\label{fig:SOd}
\end{figure}

Next, consider a $n\times n$ symmetric block matrix $R_{O(d),n,\text{Haar}}$ with $d\times d$ entries so that its $(i,j)$-th entry, $i<j$, for all $i,j=1,\ldots,n$, is uniformly sampled according to the Haar measure on $O(d)$ \cite{Mezzadri:2006}. The histogram of $R_{O(d),n,\text{Haar}}$'s spectrum is shown in Figure \ref{fig:Od}, when $n=1000$ and $d=2,3$. We also show the QQplot of the eigenvalues of $R_{O(d),n,\text{Haar}}$ versus the eigenvalues of symmetric Gaussian random matrix of size $dn\times dn$. Again, it is clear that we obtain the semi-circle law.

\begin{figure}[h]	
\begin{center}
\includegraphics[width=0.23\textwidth]{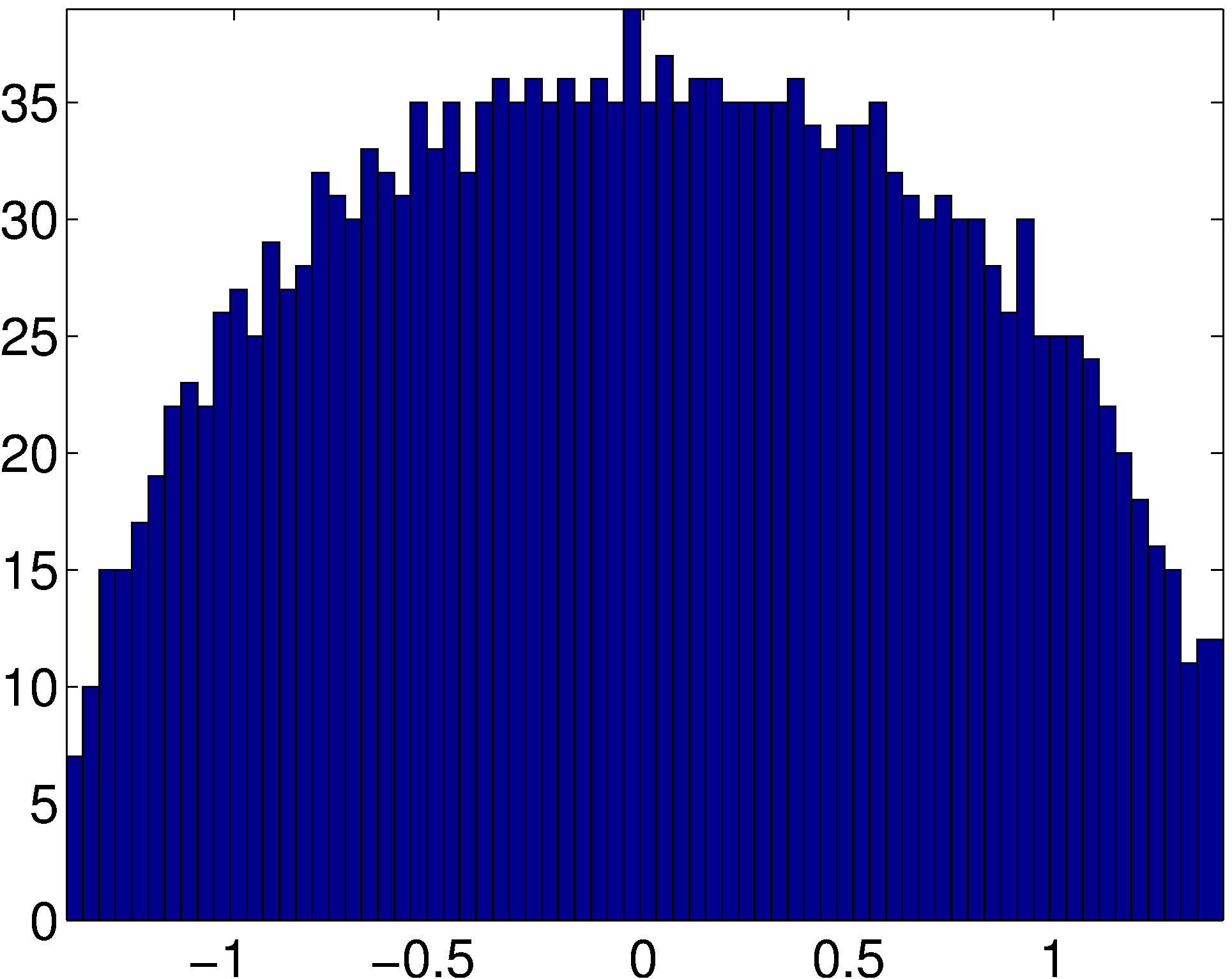}
\includegraphics[width=0.23\textwidth]{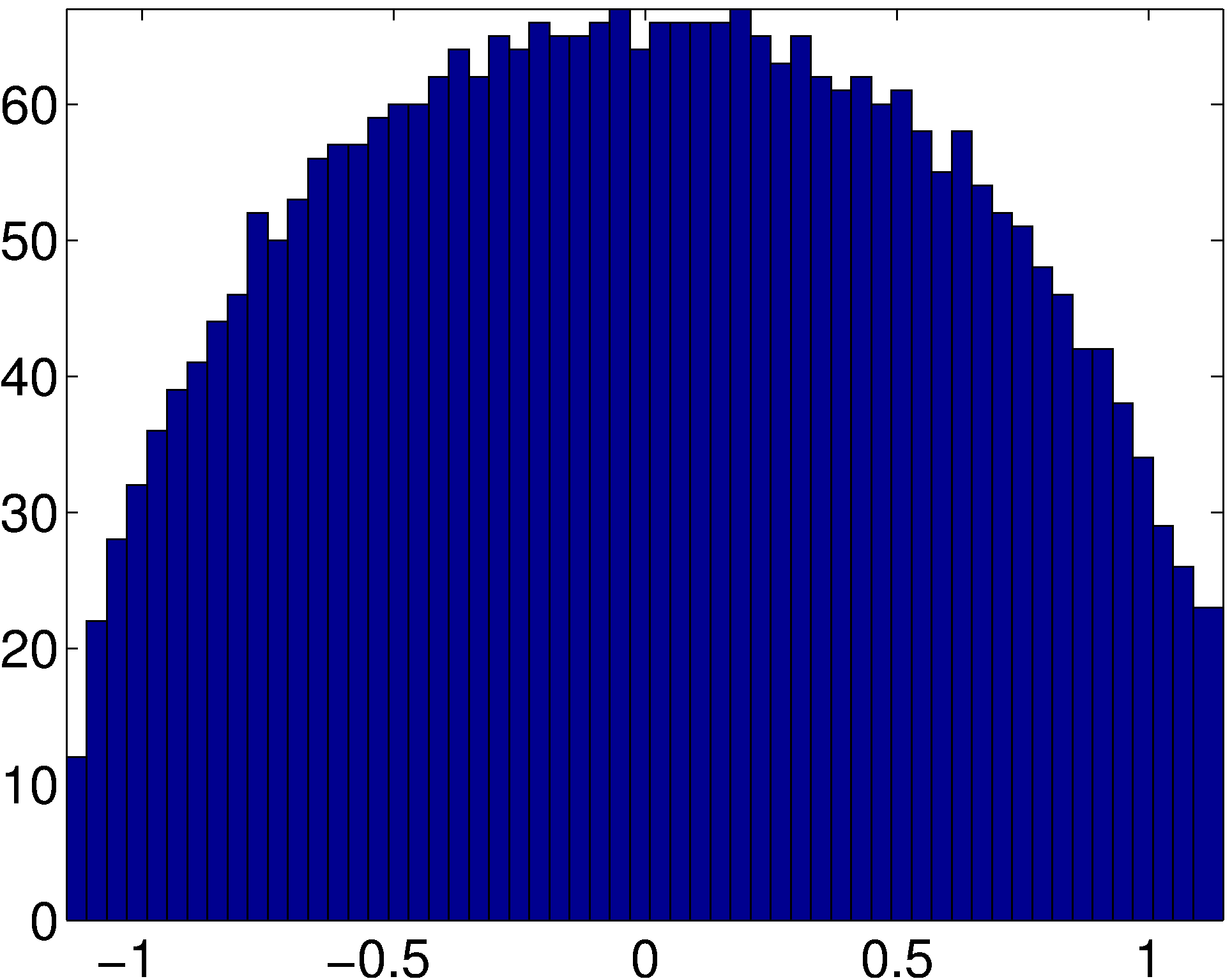}
\includegraphics[width=0.23\textwidth]{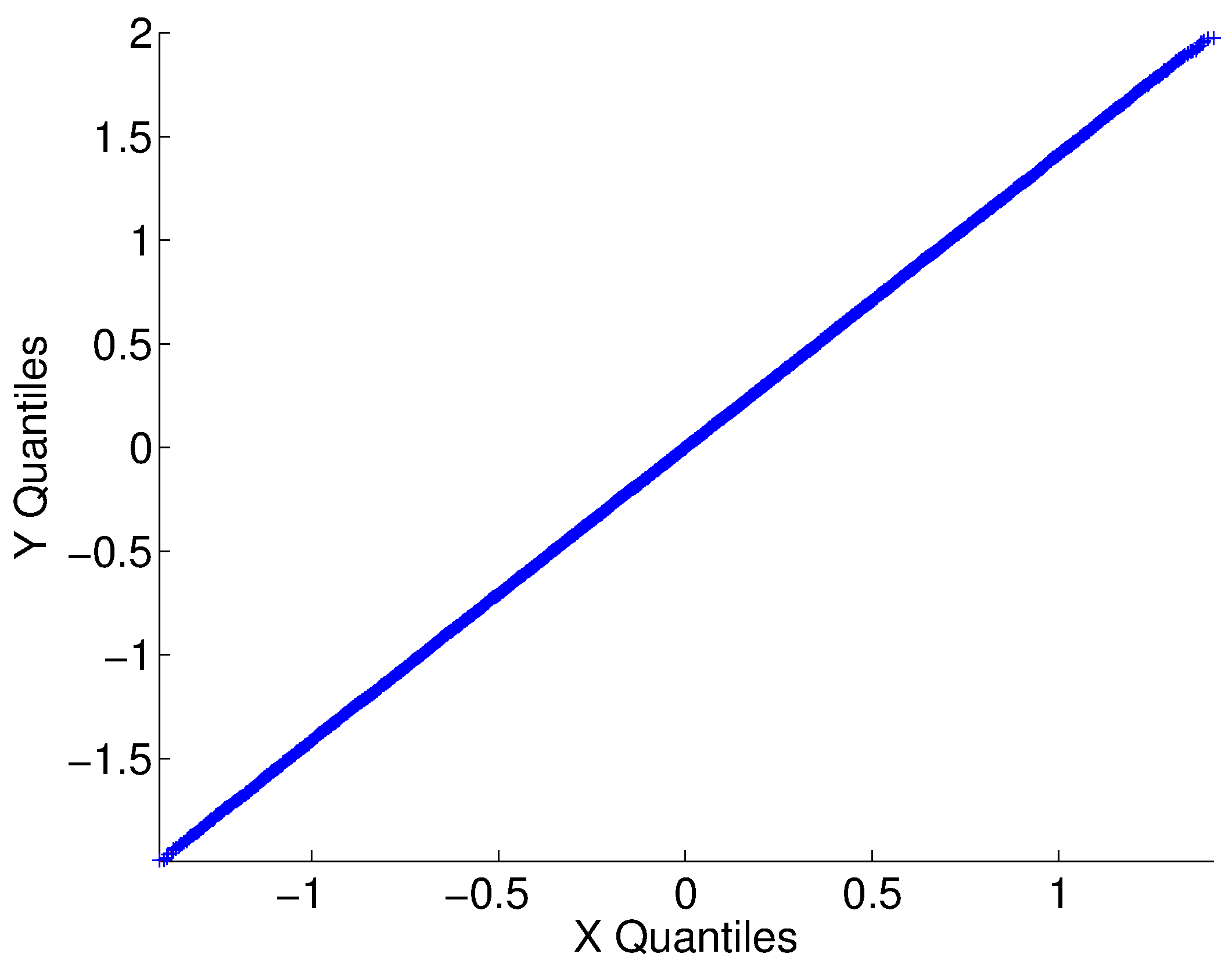} 
\includegraphics[width=0.23\textwidth]{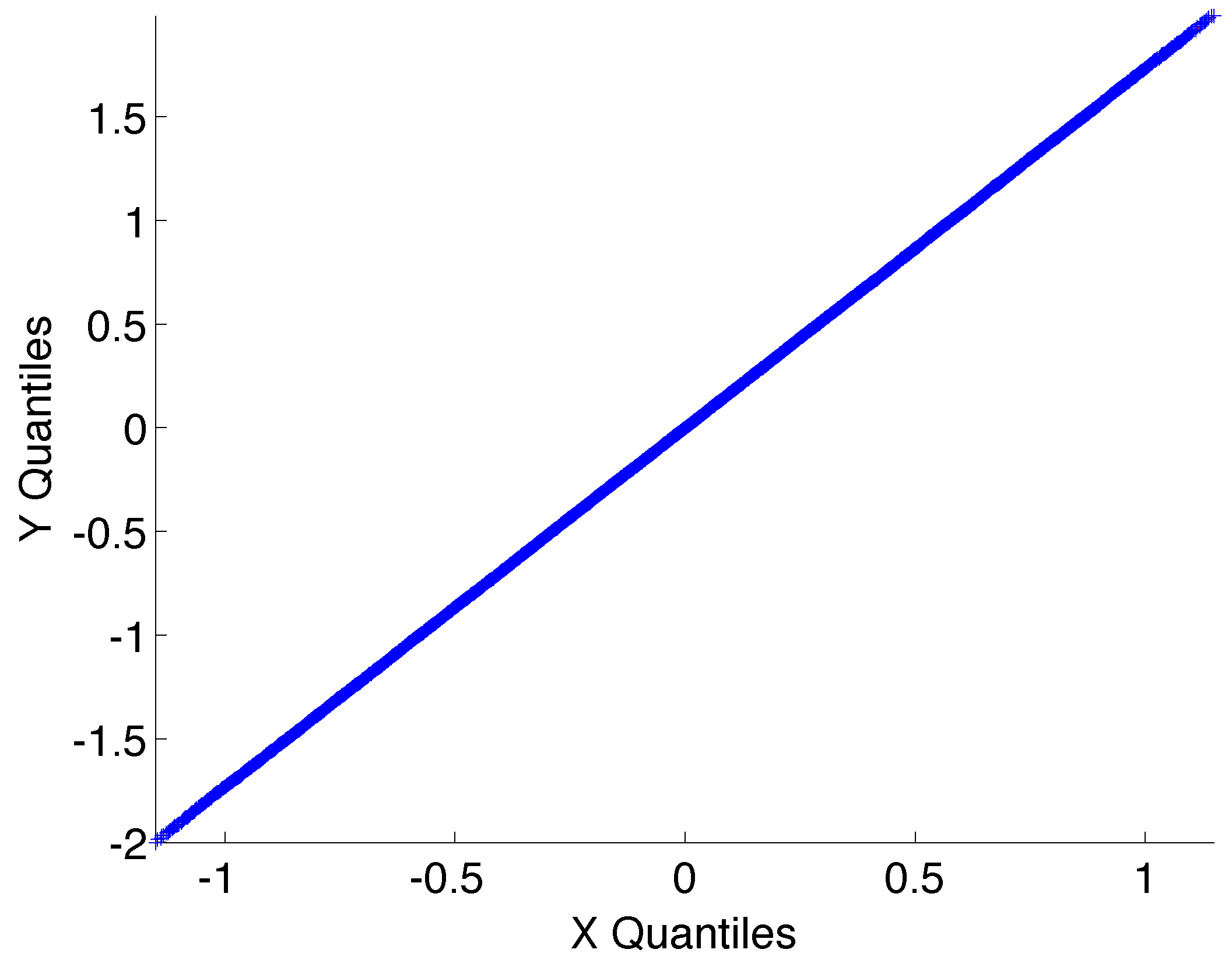}
\end{center}
\caption{\emph{Histogram of the eigenvalues of $R_{O(d),n,\text{Haar}}$ with entries sampled from $O(d)$ when $(n,d)=(1000,2)$ (left) and $(1000,3)$ (left middle) and QQplot of the eigenvalues of $R_{O(d),n,\text{Haar}}$ versus the eigenvalues of symmetric Gaussian random matrix of size $dn\times dn$ when $(n,d)=(1000,2)$ (right middle) and $(1000,3)$ (right).}}
\label{fig:Od}
\end{figure}

\subsubsection{Random orthogonal group $O(d)$ with non-Haar measure}
Consider a $n\times n$ symmetric block matrix $R_{O(d),n,\text{nonHaar}}$ with $d\times d$ entries so that its $(i,j)$-th entry is the orthogonal matrix in the QR decomposition of a random $d\times d$ matrix. It has been studied in \cite{Mezzadri:2006} that this sampling scheme on $O(d)$ is non-uniform.

\begin{figure}[h]	
\begin{center}
\includegraphics[width=0.23\textwidth]{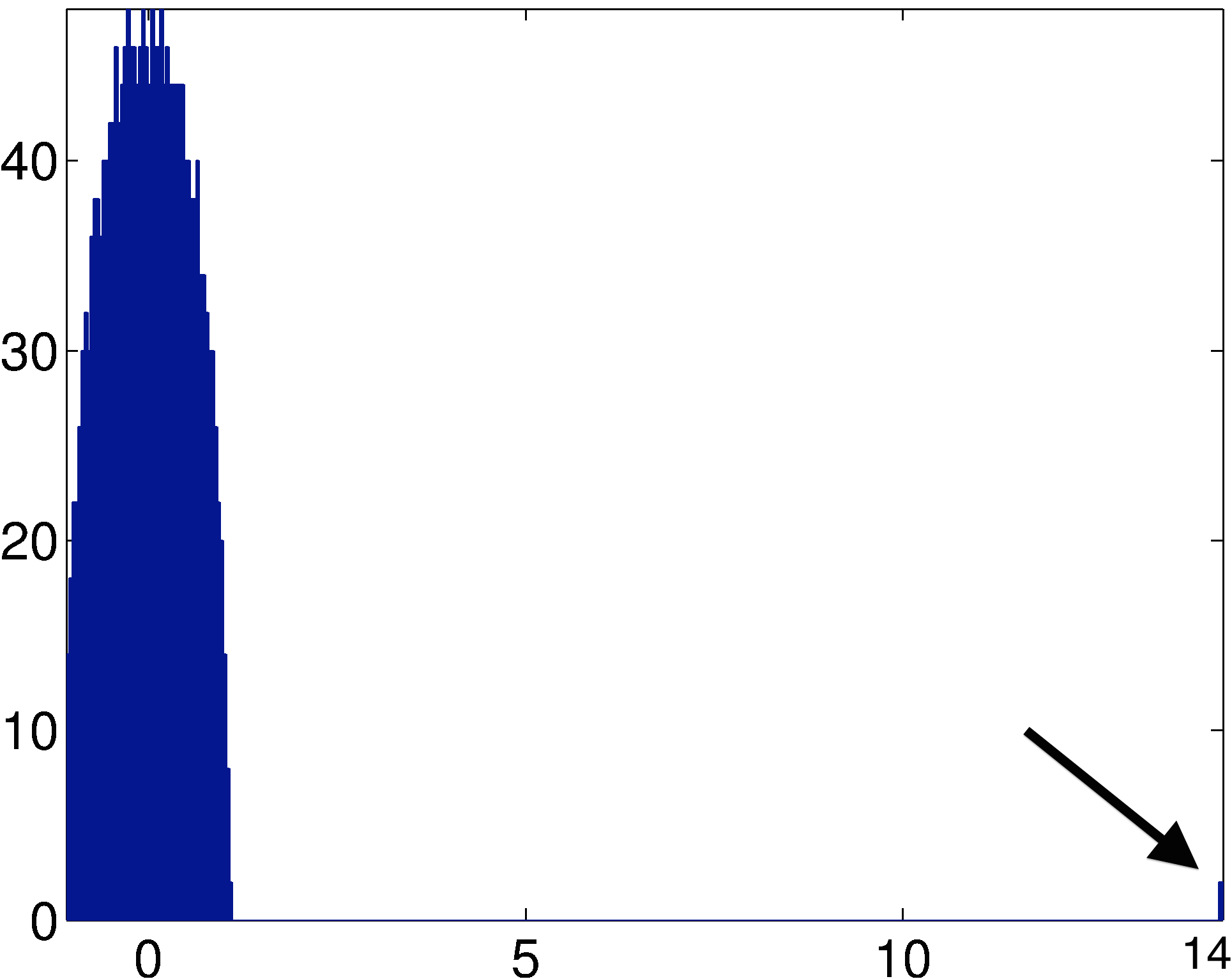}
\includegraphics[width=0.23\textwidth]{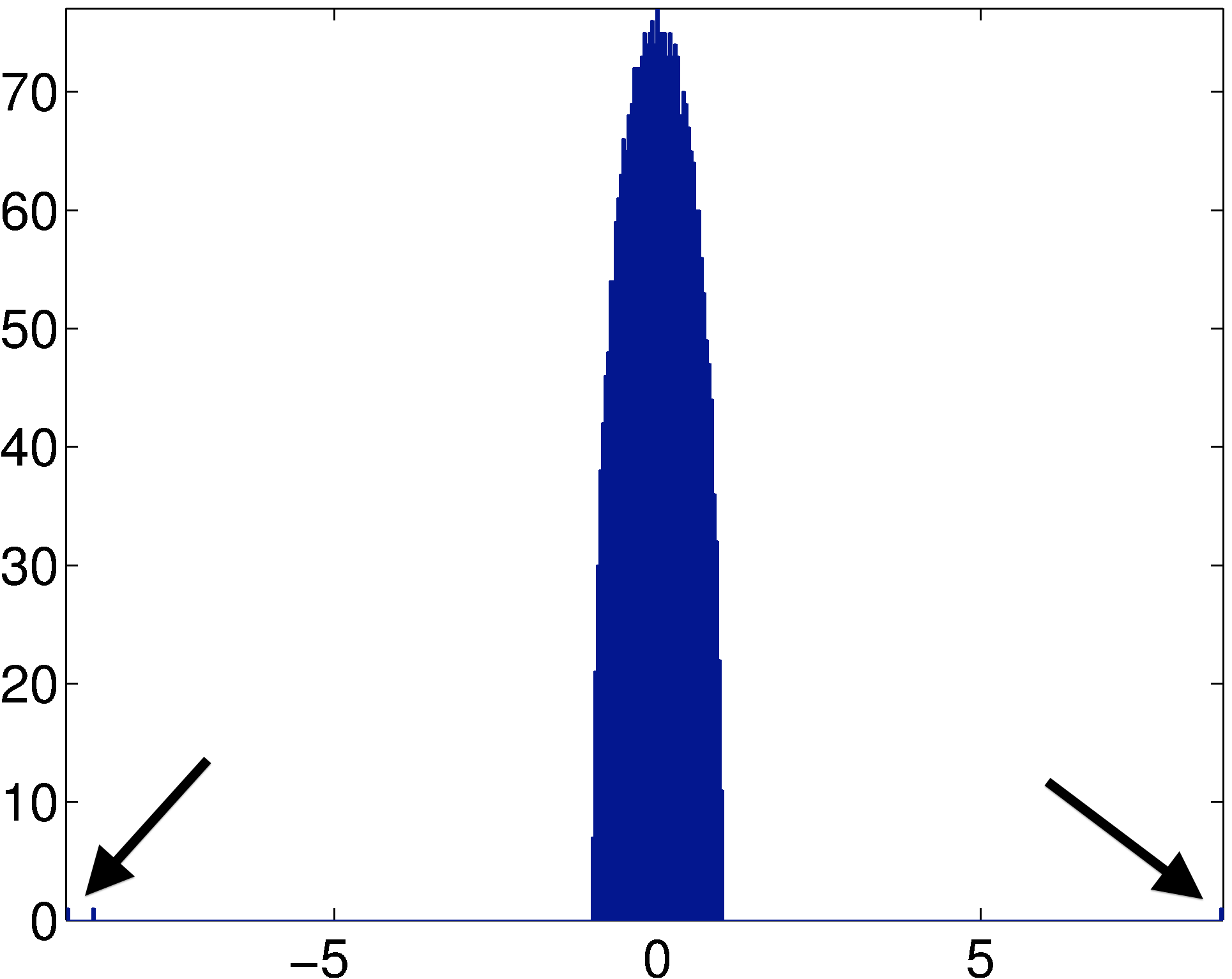}
\includegraphics[width=0.23\textwidth]{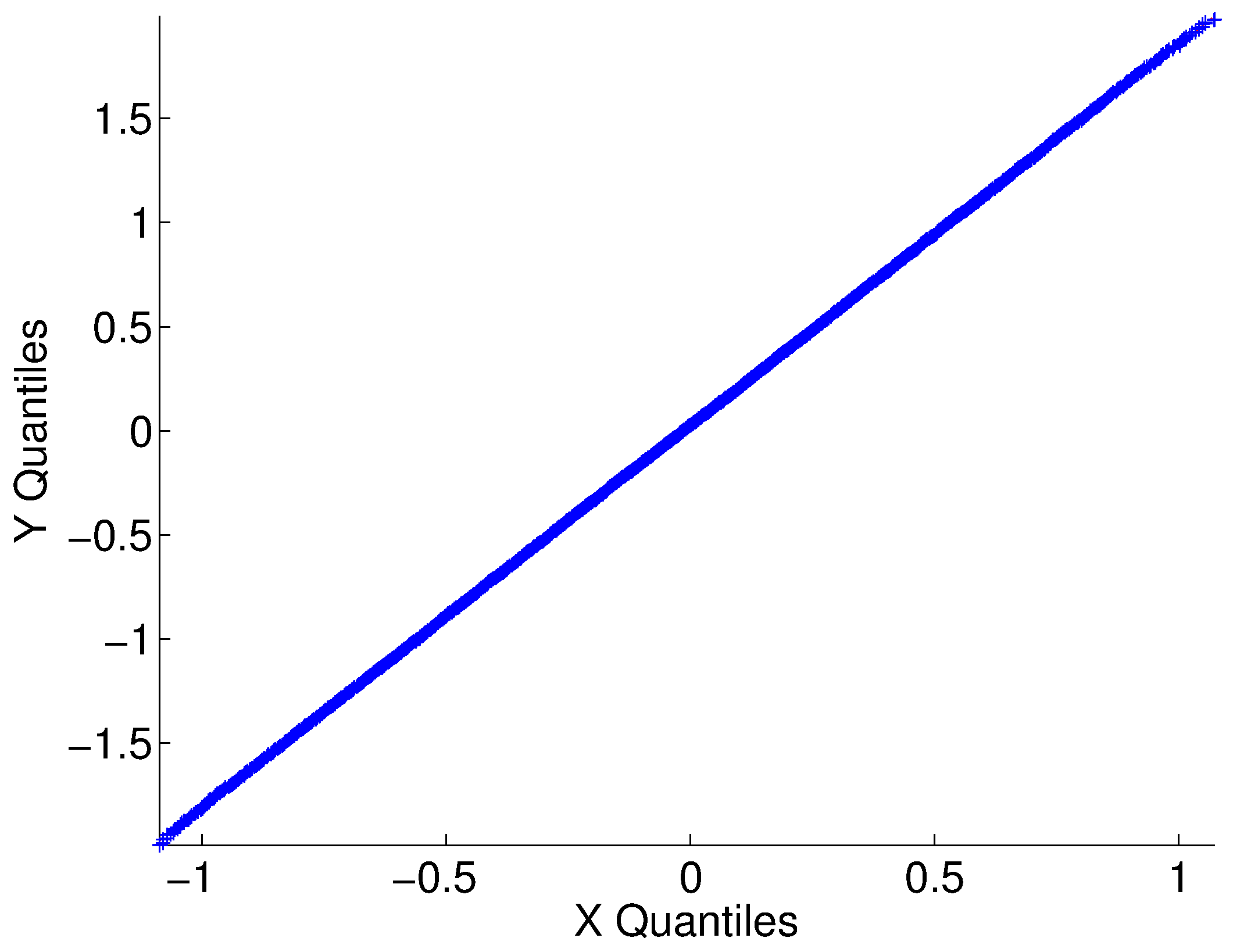} 
\includegraphics[width=0.23\textwidth]{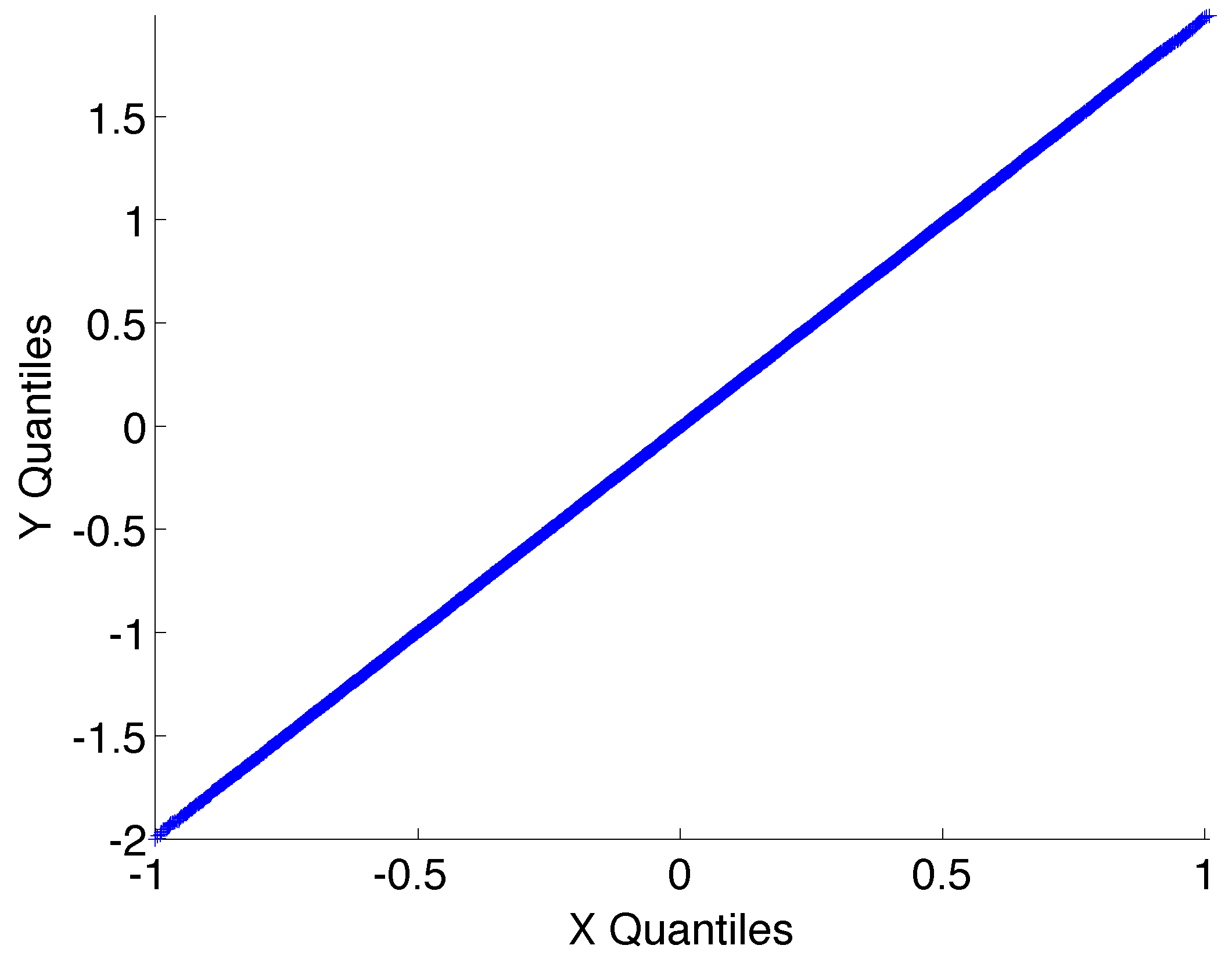}
\end{center}
\caption{\emph{Histogram of the eigenvalues of $R_{O(d),n,\text{nonHaar}}$ with entries sampled from $O(d)$ but not following the Haar measure when $(n,d)=(1000,2)$ (left) and $(1000,3)$ (left middle). We see an outlier at $14$ in the left subfigure, as is indicated by the black arrow, and three outliers in the left middle subfigure, as are indicated by the black arrows. To show that the bulk of the eigenvalues are close to the semi-circle, the QQplot of all the eigenvalues of $R_{O(d),n,\text{nonHaar}}$, which are less than $3$, versus the eigenvalues of symmetric Gaussian random matrix of size $dn\times dn$ is plotted in the right middle (resp. right) subplot when $(n,d)=(1000,2)$ (resp. $(n,d)=(1000,3)$). Note the existence of the outliers which might be falsely interpreted as ``information''. }}
\label{fig:OdnonHaar}
\end{figure}

The histogram of $R_{O(d),n,\text{nonHaar}}$'s spectrum is shown in Figure \ref{fig:OdnonHaar}, when $n=1000$ and $d=2,3$. We also show the QQplot of all the eigenvalues of $R_{O(d),n,\text{nonHaar}}$, which are less than $3$, versus the eigenvalues of symmetric Gaussian random matrix of size $dn\times dn$. According to the QQplot, we may infer that the bulk of the empirical spectral distribution follows the semi-circle law. Note that the outliers might be {\it falsely} interpreted as ``information'', so we should be careful about the sampling scheme on the group matrix. In other words, there exist structures inside the block random matrix that might be misleading.

\subsubsection{Special linear group $Sl(d,\RR)$}\label{Section:NumericalSldR}
Consider a $n\times n$ symmetric block matrix $R_{Sl(d,\RR),n}$ with $d\times d$ entries so that its $(i,j)$-th entry is sampled from $Sl(d,\RR)$ by the following steps.
For each $(i,j)$, $i<j$, get a random $d\times d$ matrix with i.i.d. Gaussian entries, and denote it as $g$. If $|\det(g)|=0$, we resample another matrix until we get $g$ with $|\det(g)|>0$. Then define $R_{Sl(d,\RR),n}(i,j)$ to be $|\det(g)|^{-1/d}g$.
Then, if $\det(R_{Sl(d,\RR),n}(i,j))=1$, we get a component in $Sl(d,\RR)$; otherwise, flip the sign of the first column to ensure we get a component in $Sl(d,\RR)$.

The histogram of spectra of $R_{Sl(d,\RR),n}$ with $d =2,3,4,5$ are shown in Figure \ref{fig:Sld2-1} when $n=1000$. We also show the QQplot of the eigenvalues of $R_{Sl(d,\RR),n}$ versus the eigenvalues of symmetric Gaussian random matrix of size $dn\times dn$ in Figure \ref{fig:SldQQplot}. Note that starting from $Sl(4,\RR)$, the histogram are semi-circle-like.

Note that the histogram of $Sl(2,\RR)$ spreads broadly. The distribution and moments calculation of the sampling scheme on $Sl(d,\RR)$ are detailed in Subsection \ref{subsec:CompsSLdAppendix}, where we see the Cauchy-like behavior of the square of the largest singular value of $R_{Sl(2,\RR),n}$ (see Corollary \ref{coro:SL2svsAreLikeCauchy}). So our theory simply does not apply to this case and the fact that we do not get a semi-circle limit in this case is not surprising. 

On the other hand, the case of $Sl(3,\RR)$ falls under the umbrella of our theory (see Corollary \ref{coro:SLdCase}). The QQplot of the distribution of $R_{Sl(3,\RR),n}$ against the semi-circle law shows a few outliers. This of course is not in contradiction with our theoretical results: we have shown convergence of the LSD of $R_{Sl(3,\RR),n}$ but this naturally does not imply that the extreme eigenvalues of this random matrix convergence to the endpoint of the LSD.

\begin{figure}[h]
\begin{center}
\includegraphics[width=0.23\textwidth]{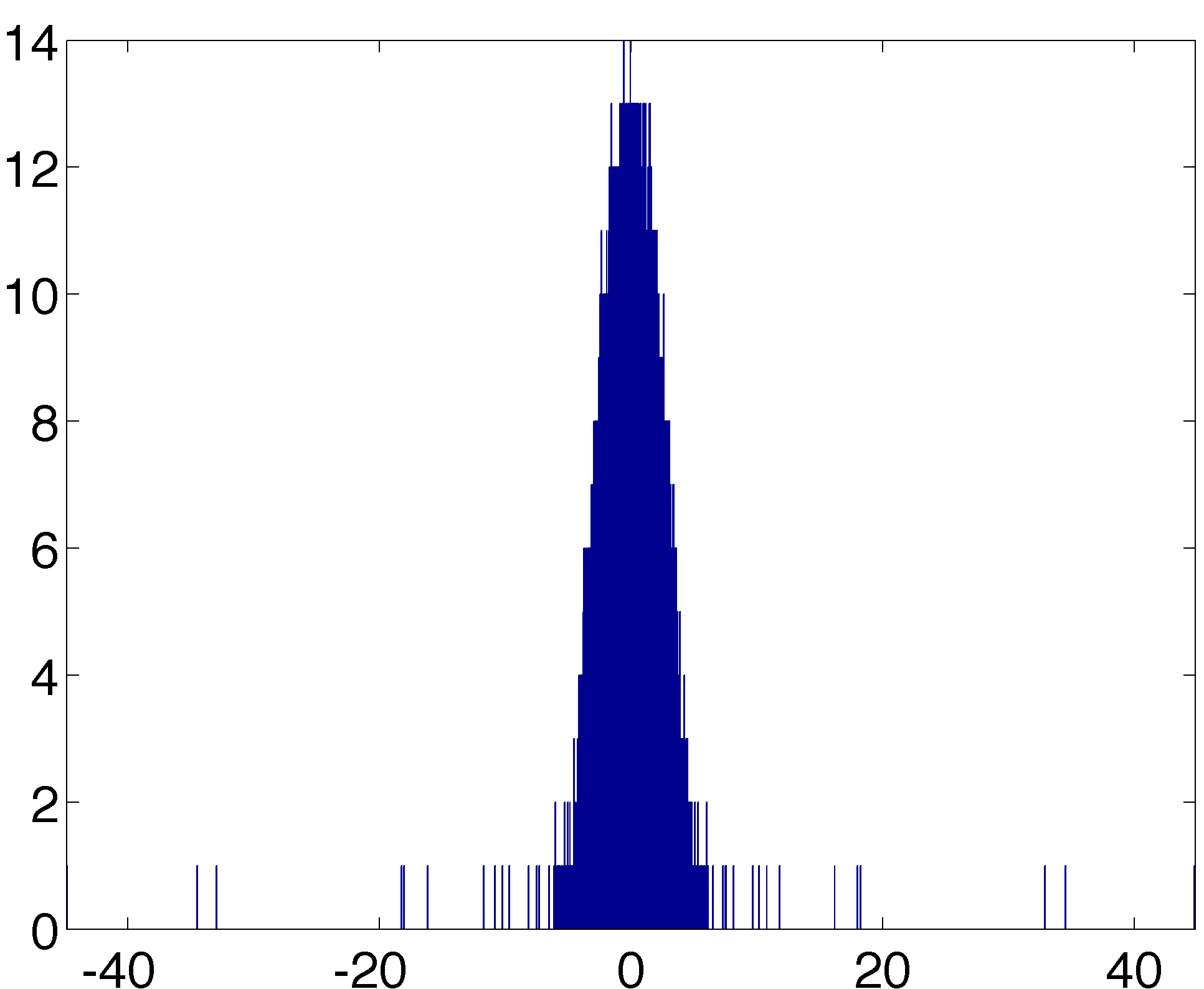} 
\includegraphics[width=0.23\textwidth]{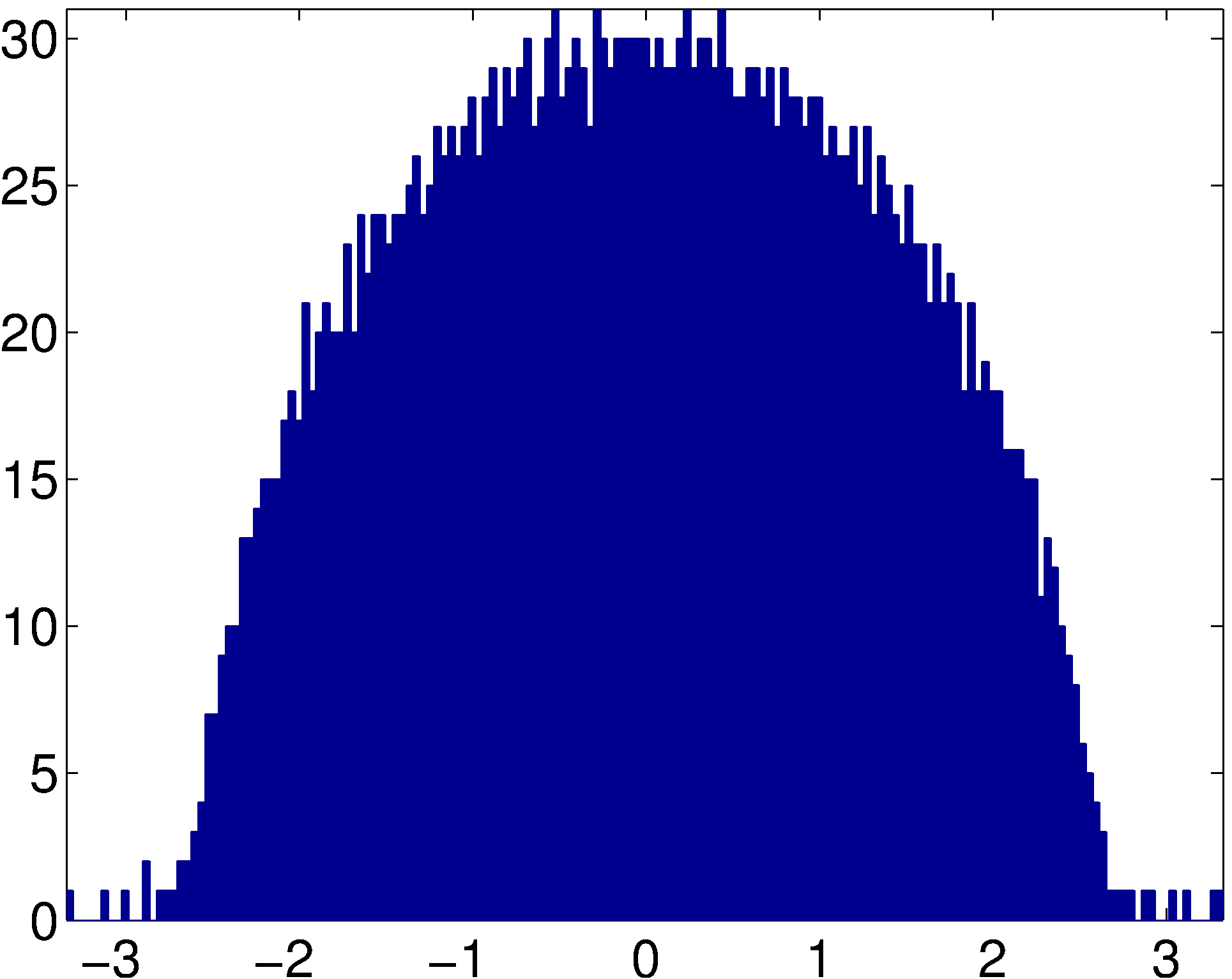}
\includegraphics[width=0.23\textwidth]{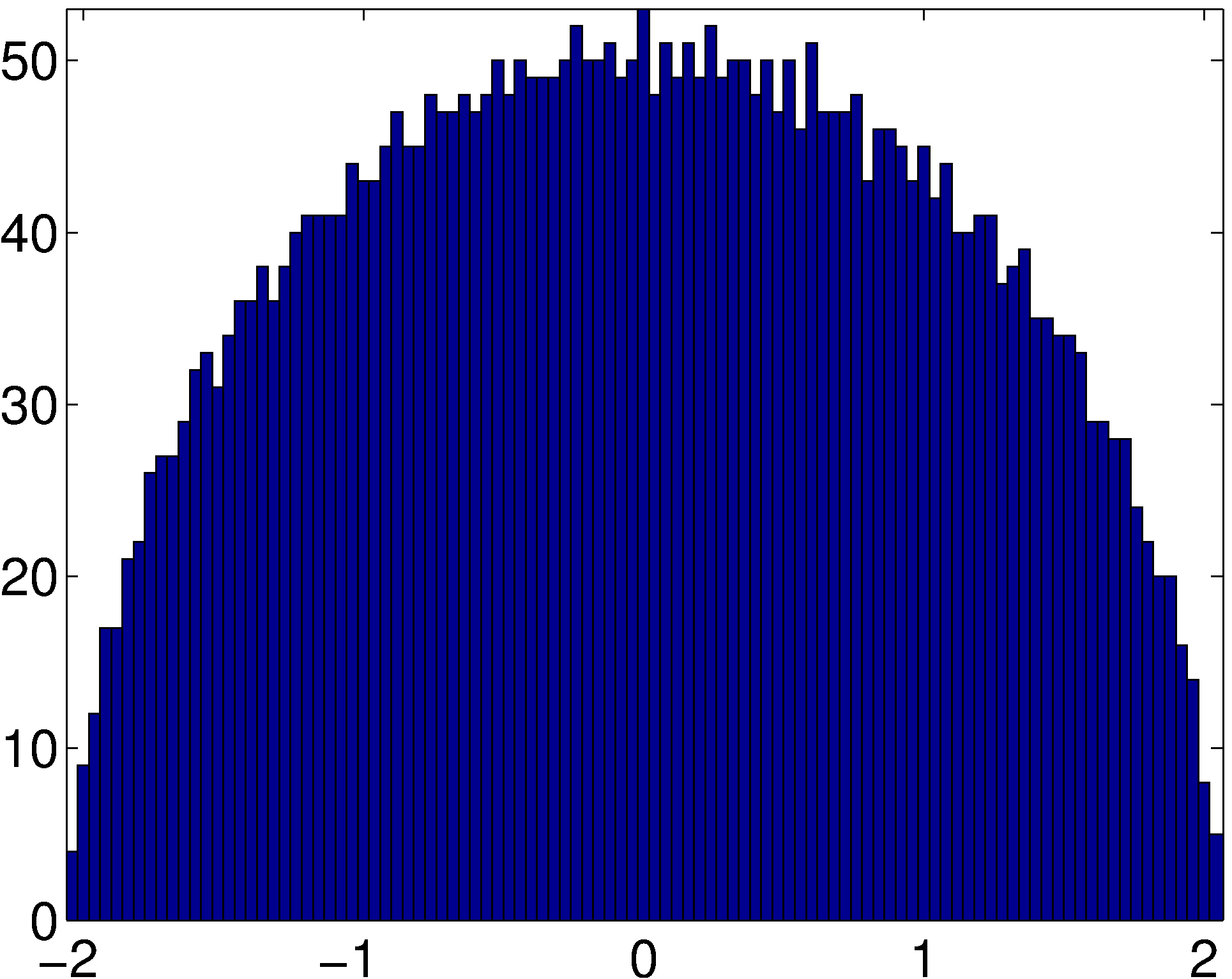} 
\includegraphics[width=0.23\textwidth]{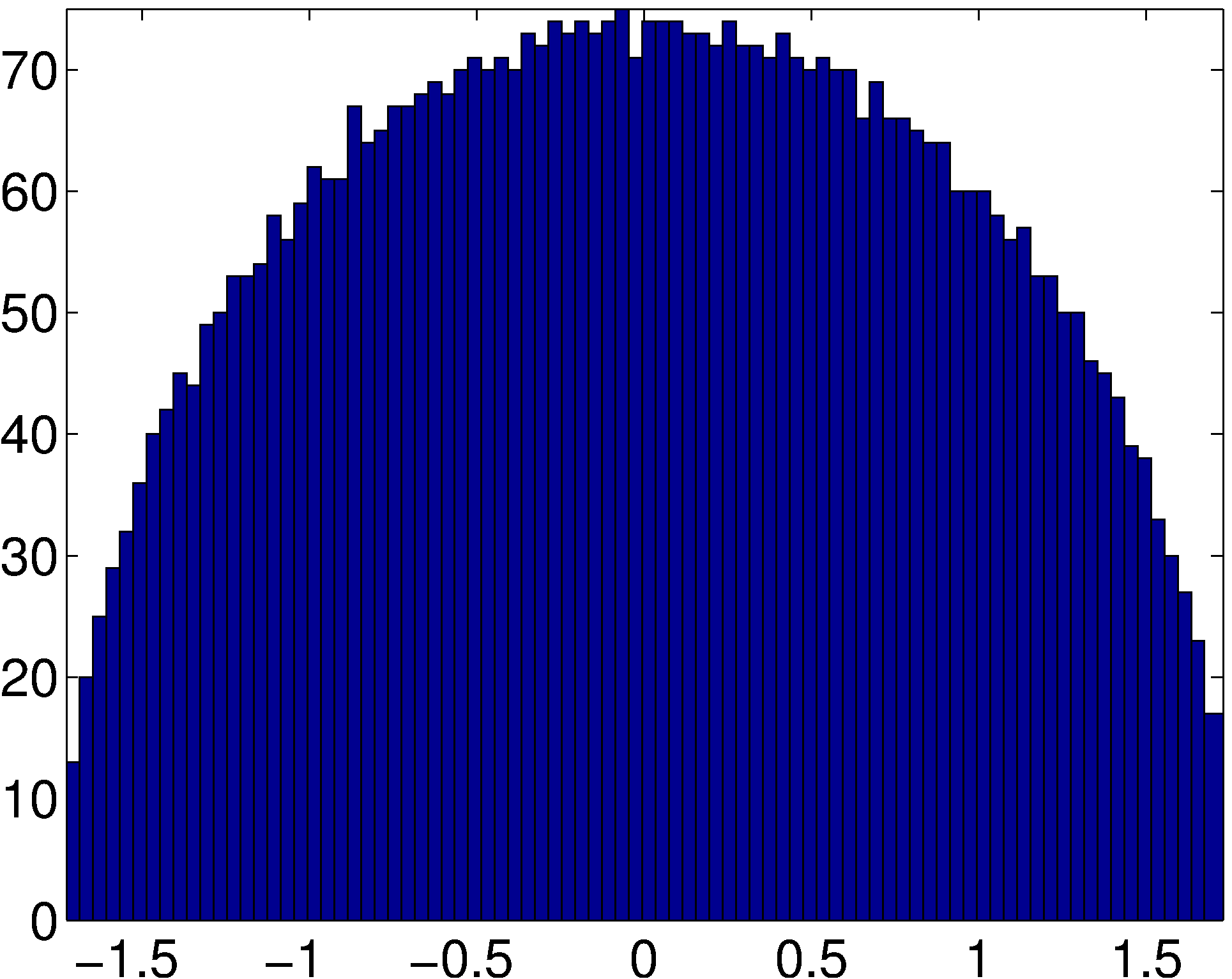}
\end{center}
\caption{\emph{Histogram of the eigenvalues of $R_{Sl(d,\RR),n}$ with $d=2,3,4,5$ (from left to right) and $n=1000$.}}
\label{fig:Sld2-1}
\end{figure}

\begin{figure}[h]	
\begin{center}
\includegraphics[width=0.23\textwidth]{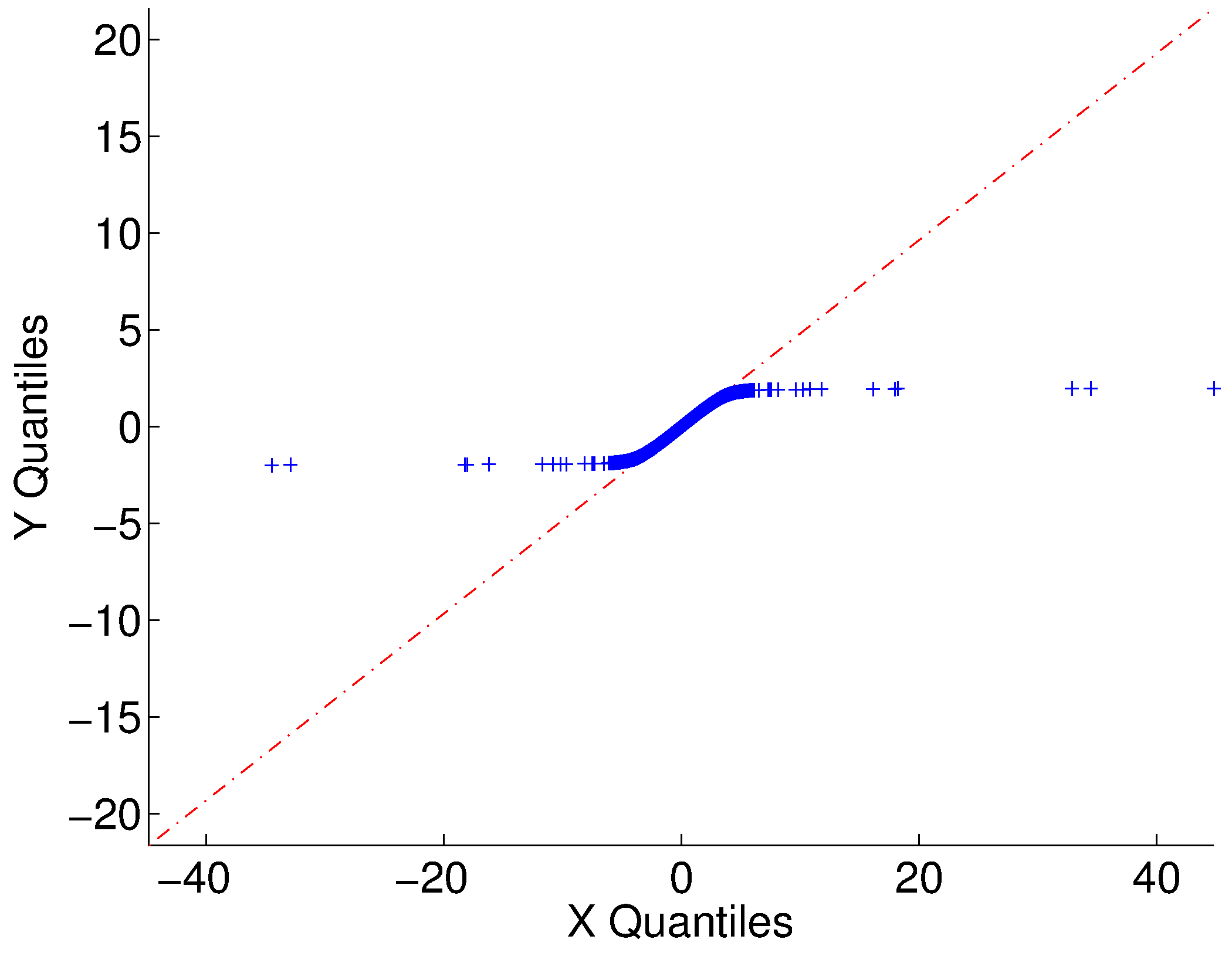} 
\includegraphics[width=0.23\textwidth]{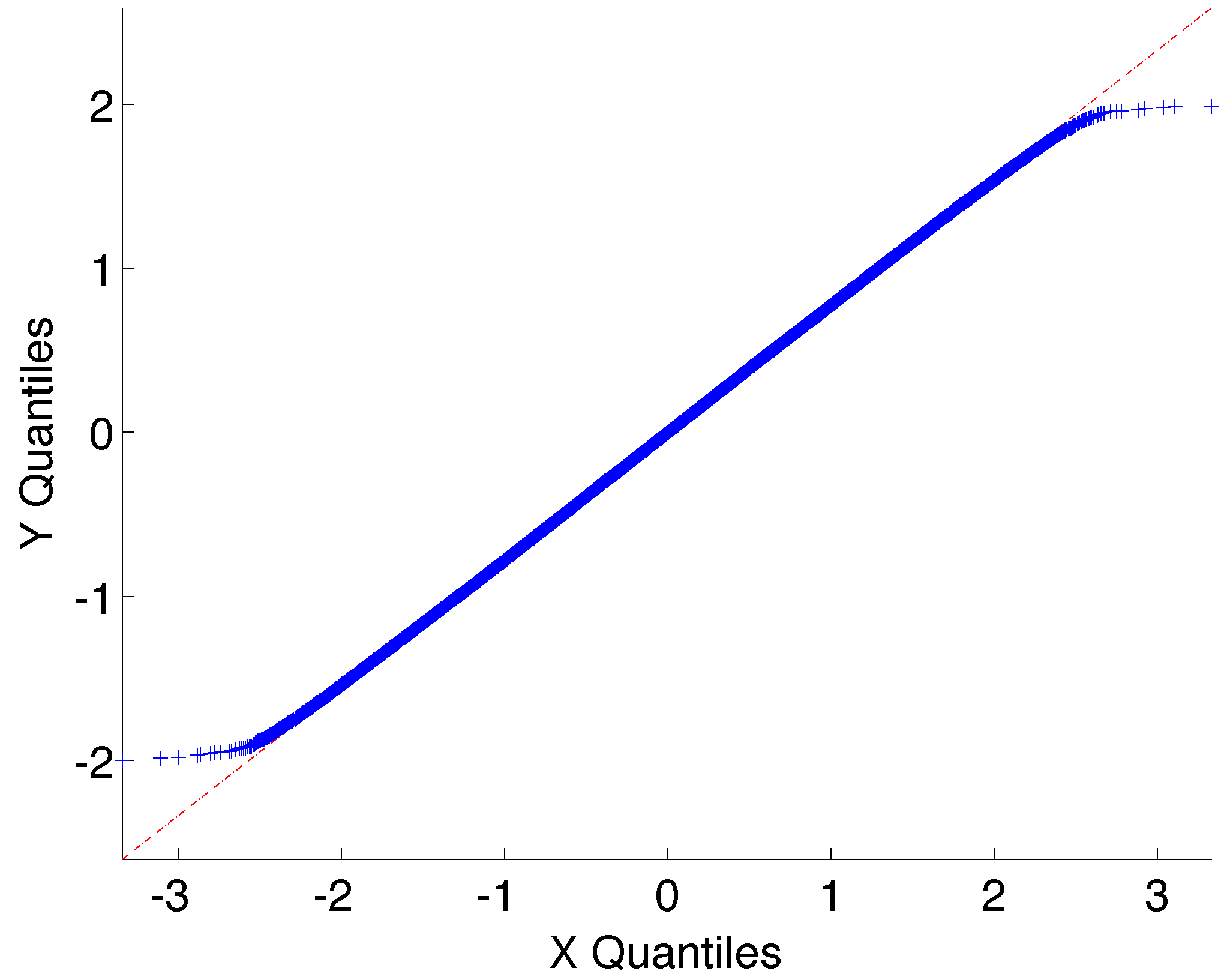}
\includegraphics[width=0.23\textwidth]{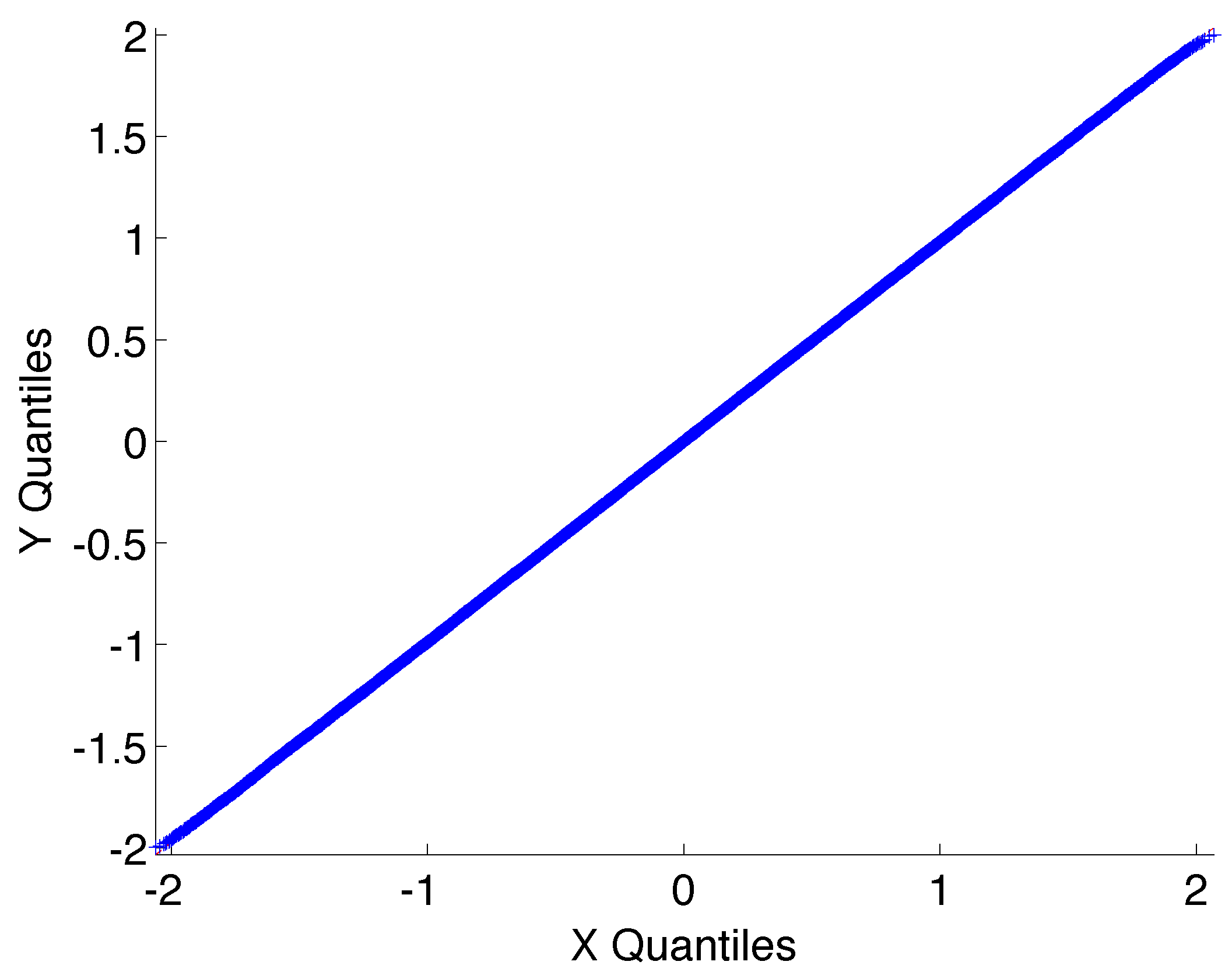} 
\includegraphics[width=0.23\textwidth]{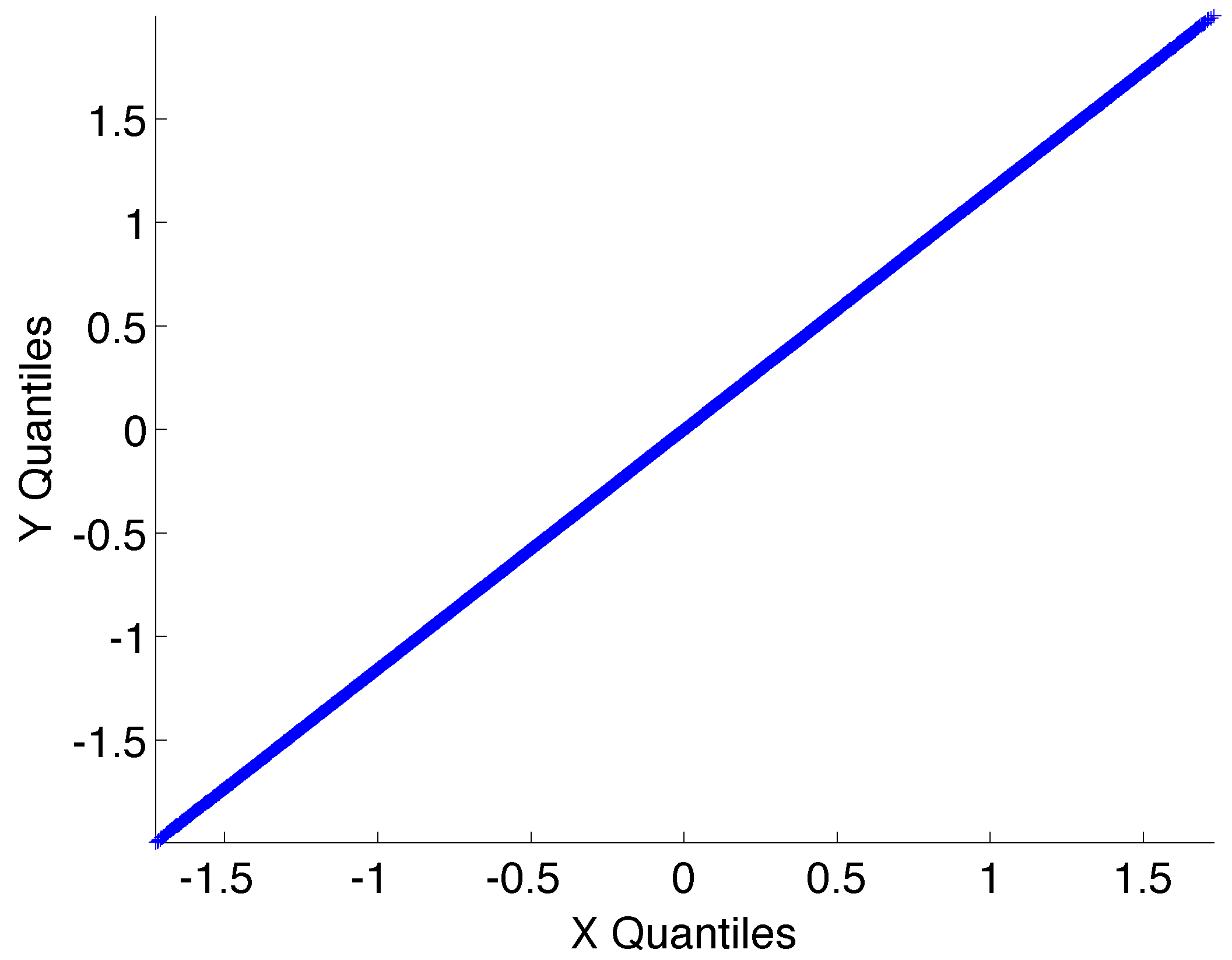}
\end{center}
\caption{\emph{QQplot of the eigenvalues of $R_{Sl(d,\RR),n}$ versus the eigenvalues of symmetric Gaussian random matrix of size $dn\times dn$ with $d=2,3,4,5$ (from left to right) and $n=1000$}}
\label{fig:SldQQplot}
\end{figure}

\subsection{Random block matrix with dependent blocks}
We now show how the GCL behaves in the setup with dependence among the blocks as is discussed in Corollary \ref{coro:controlStieltjesLatentVariablesCaseBlockMatrices}. One practical problem of this kind is the class averaging algorithm. 

The discretized images are simulated as
\[
\mathcal{X}_p:=\{Z^p_i\}_{i=1}^n\subset \RR^p, 
\] 
where $Z^p_i$, $i=1,\ldots,n$ are prepared in the following way. 
Suppose there are $p$ pixels in $\{-L,-L+1,\dots,L-1,L\}\times \{-L,-L+1,\dots,L-1,L\}$, which is the discretization of $[-L,L]\times [-L,L]$ in the Cartesian grid. The image $Z^p_i$ is set by take a Gaussian random vector $Z\sim \mathcal{N}(0,I_p)$. 
Please see Figure \ref{fig:VDMnull} for one of the realization. With $\mathcal{X}_p$, for all $i,j=1,\ldots,n$, evaluate
\[
g_{ij}:=\argmin_{g\in SO(2)}\|Z^p_i-g\circ Z^p_j\|_{L^2},
\]
where $g\circ Z^p_j$ means the numerical rotation of $Z^p_j$ by $g$ in the Cartesian grid. 
If there is more than one minimizer, we choose the first one as $g_{ij}$. Then, find the rotational invariant distance (RID) by
\[
d_{\textup{RID},ij}:=\min_{g\in SO(2)}\|Z^p_i-g\circ Z^p_j\|_{L^2}.
\]
The $SO(2)$ is discretized to $N_r$ equally spaced degrees for the numerical minimization. 
See Figure \ref{fig:VDMnull} for the distribution of the optimal rotation $g_{ij}$ and the distribution of RID.

With $g_{ij}$ and $d_{\textup{RID},ij}$ and a chosen $\epsilon>0$, we build up the $n\times n$ block matrix $\vdmS$ so that the $(i,j)$-th block is
\begin{align}\label{Numerical:S:definition}
\vdmS_{ij}=e^{-d_{\textup{RID},ij}^2/\epsilon}g_{ij},
\end{align}
where $i\neq j$, and the $n\times n$ diagonal block matrix $\vdmD$ so that the $i$-th diagonal block is
\begin{align}
\vdmD_{ii}=\sum_{j=1,j\neq i}^n e^{-d_{\textup{RID},ij}^2/\epsilon}I_2,
\end{align}
where $i=1,\ldots,n$. 
The histogram of the spectra of $\vdmD^{-1}\vdmS$ with $\epsilon$ being the $25\%$ quantile of $d_{\textup{RID},ij}$, $n=700$, $L=31$ and $N_r=240$ is shown in Figure \ref{fig:VDMnull}. Note that in this case, $p=3001\gg n$. 
\begin{figure}[h]	
\begin{center}
\includegraphics[width=0.32\textwidth]{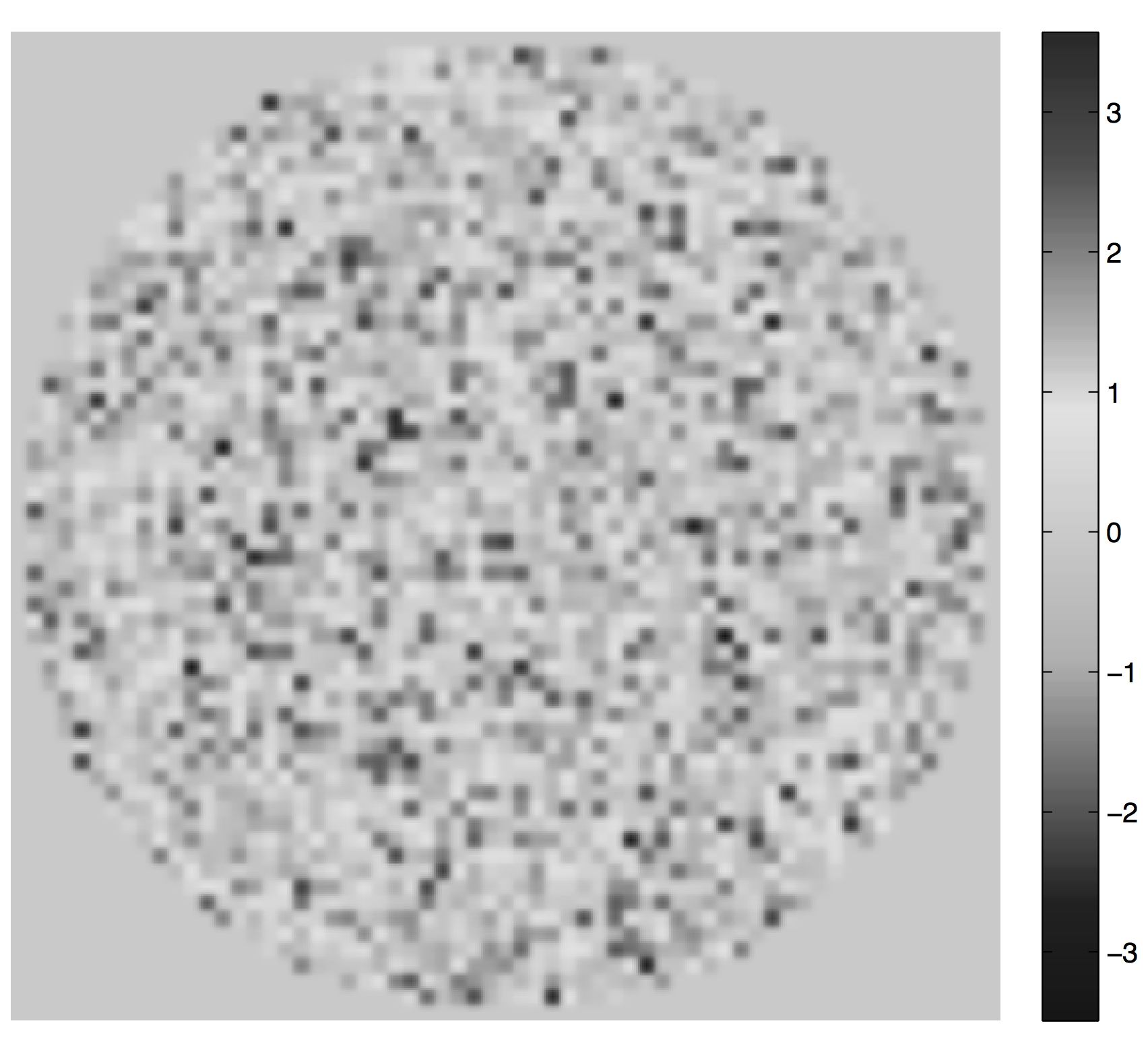} 
\includegraphics[width=0.32\textwidth]{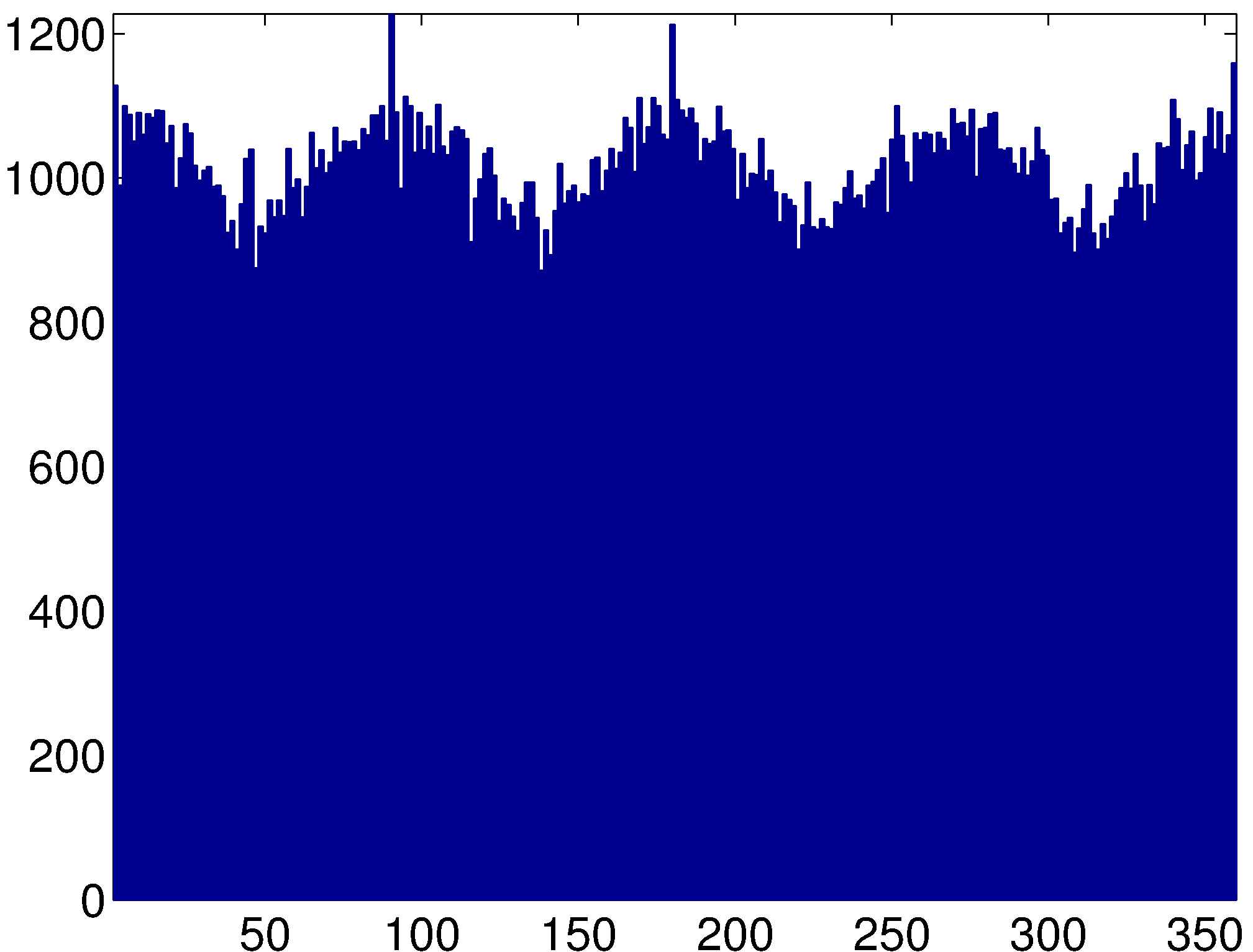}
\includegraphics[width=0.32\textwidth]{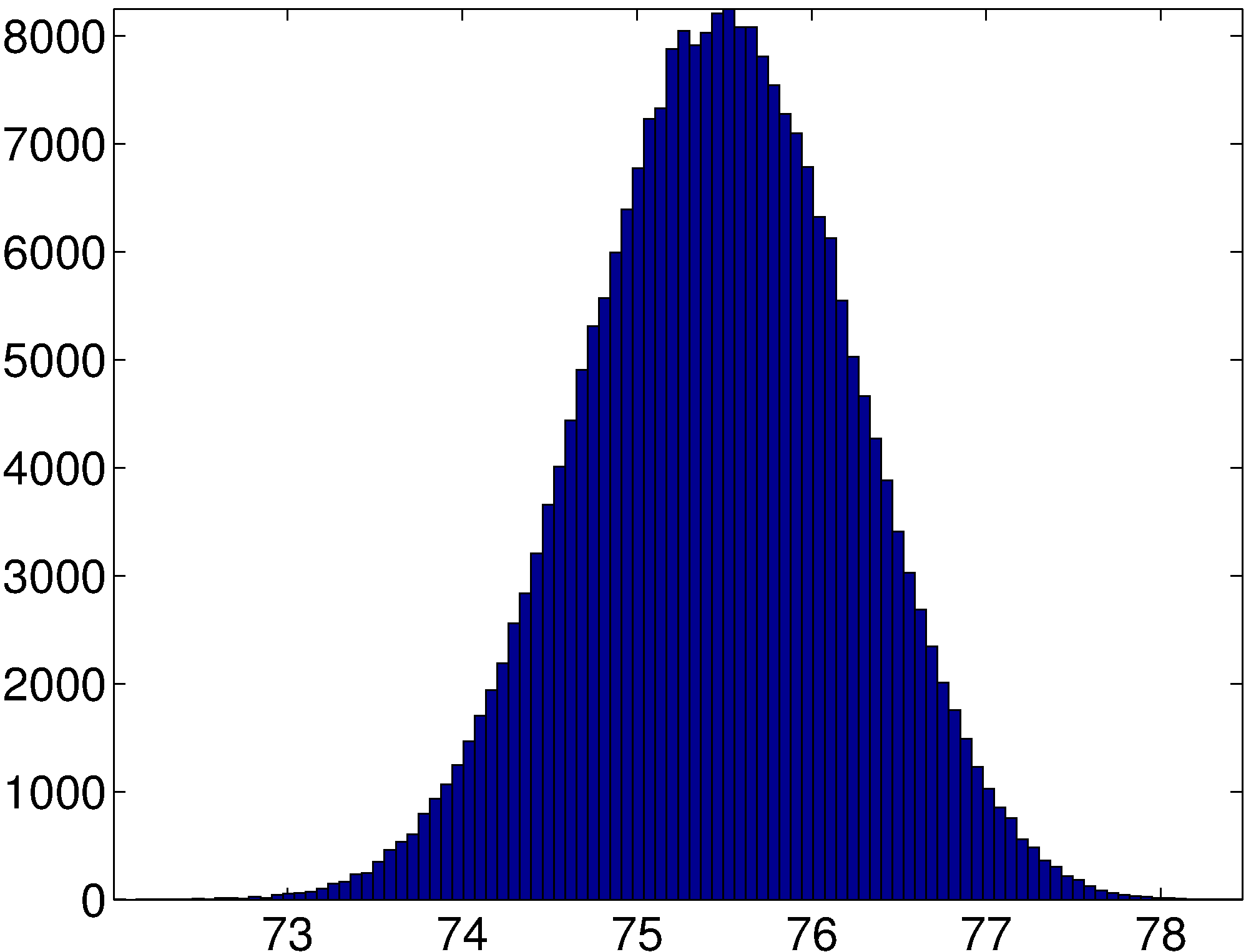} \\
\includegraphics[width=0.32\textwidth]{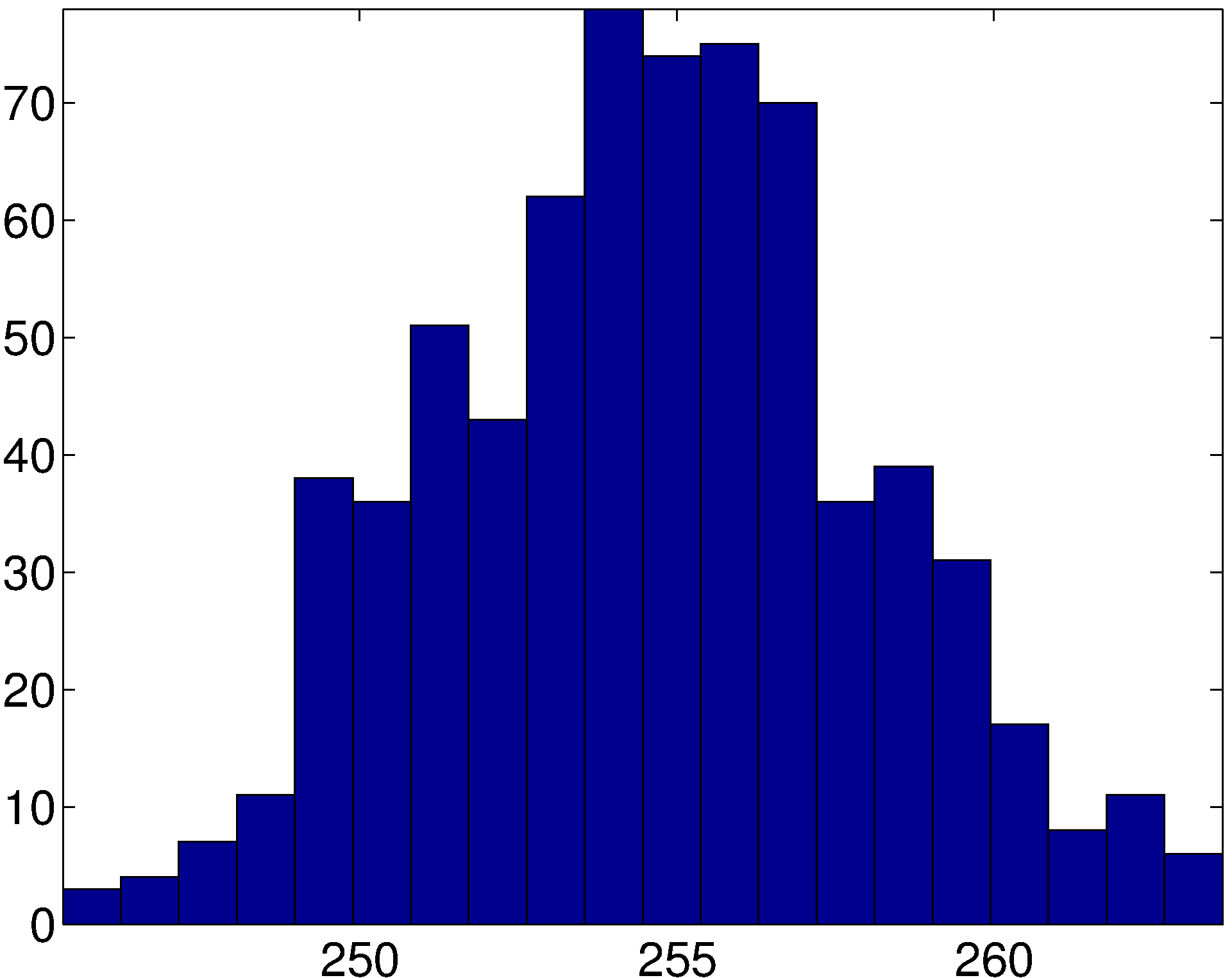} 
\includegraphics[width=0.32\textwidth]{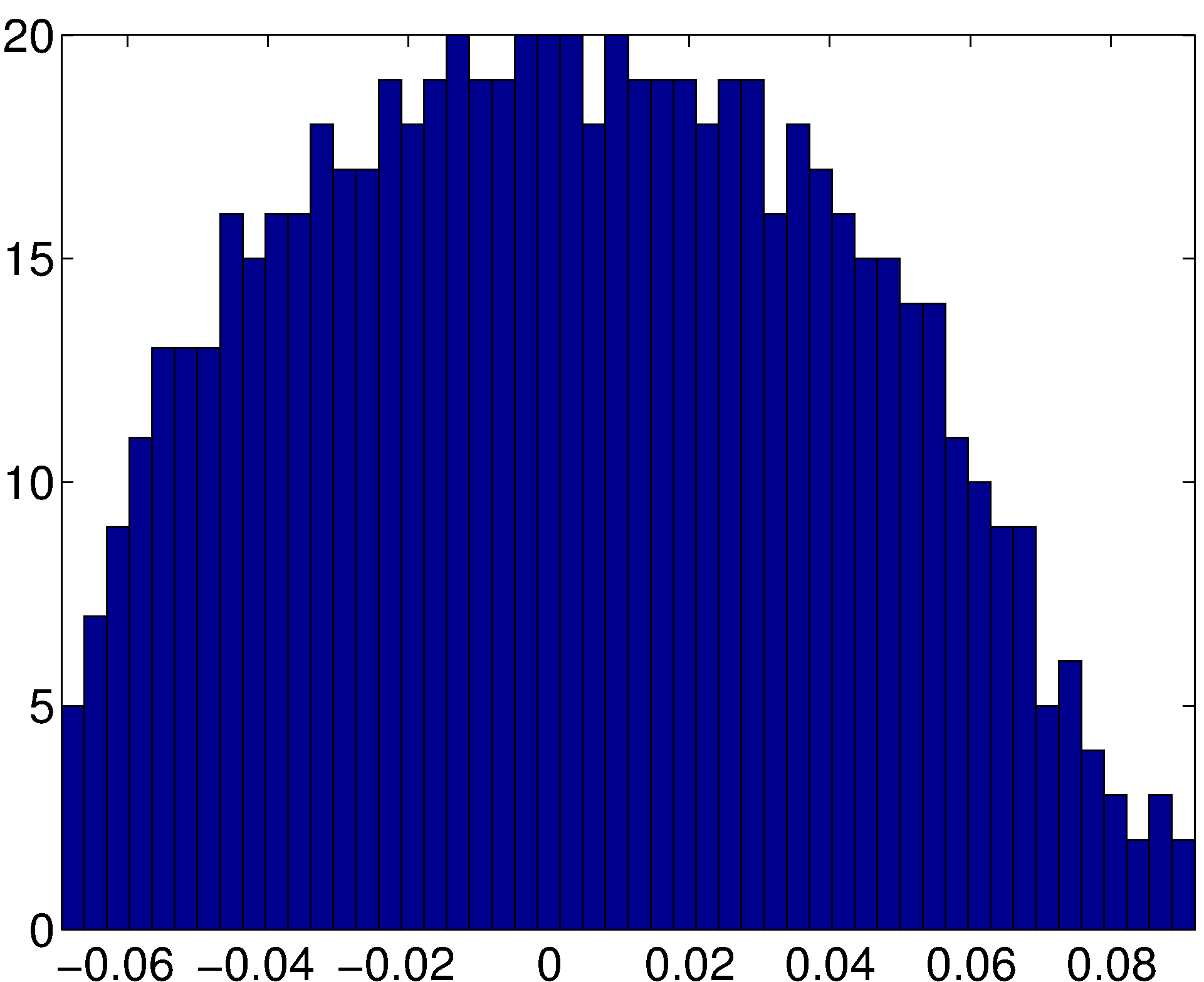}
\includegraphics[width=0.32\textwidth]{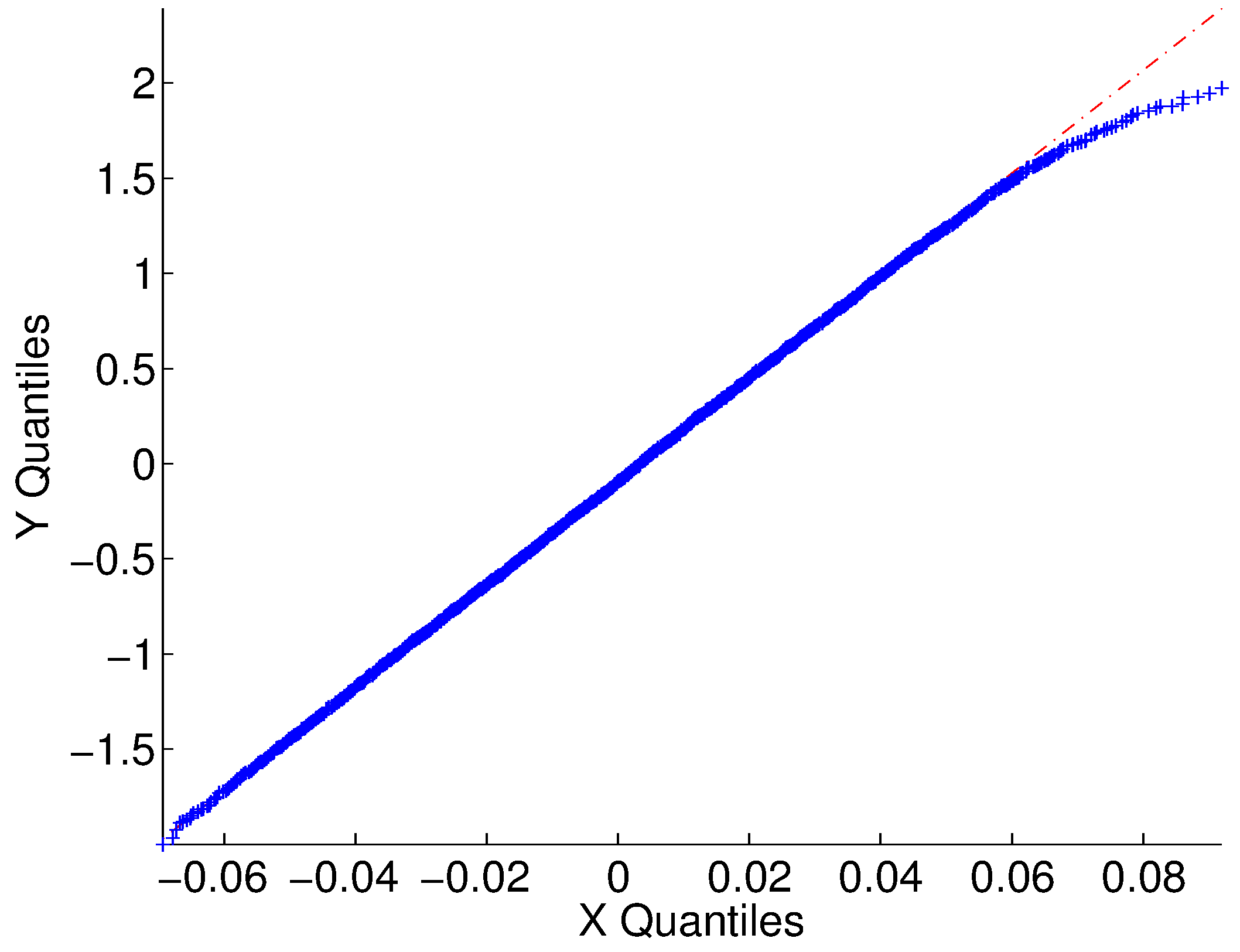}
\end{center}
\caption{The spectrum of GCL of $\mathcal{X}_{3001}$ with $n=700$, $L=31$ and $N_r=240$. Top left: the realization $Z^{3001}_1$; top middle:  the distribution of the optimal rotation $g_{ij}$, where the $x$-axis is the degree of the rotation; top right: the distribution of the RID $d_{\textup{RID},ij}$; bottom left: the histogram of the diagonal entries of $\vdmD$; bottom middle: the histogram of the eigenvalues of $\vdmD^{-1}\vdmS$; bottom right: the QQplot of the eigenvalues of $\vdmD^{-1}\vdmS$ versus the eigenvalues of symmetric Gaussian random matrix of size $700\times 700$.}
\label{fig:VDMnull}
\end{figure}

Note that the numerical rotation of images in the Cartesian grid might deteriorate the statistical property of the images, that is, the statistical property of $Z^p_i$ might be different from that of $R\circ Z^p_i$ for a generic rotation $R$. To eliminate this possibility, next we consider the following model which guarantees the invariance of the statistical property under rotation. Take
\[
\mathcal{X}_{0,p}:=\{Z^p_{0,i}\}_{i=1}^n\subset \RR^p, 
\] 
where $Z^p_{0,i}$, $i=1,\ldots,n$ are i.i.d. sampled from a Gaussian random vector with mean $0$ and covariance matrix $I_p$. The component in $\mathcal{X}_{0,p}$ will serve as a surrogate image which is defined on the uniform discretization of $S^1$ by $p$ grids; that is, $Z^p_{0,\ell}$ can be viewed as a function defined on the grid $\{(\cos(2\pi \ell/p), \sin(2\pi \ell/p))\in S^1\}_{\ell=1}^p$. Under this setup, the rotation is realized by cyclically permuting $Z^p_{0,i}$. Note that these ``surrogate images'' are sufficient for us to model the invariant statistical behavior of a purely noise image under rotation. Similarly, we define the optimal rotation (resp. RID and affinity) between $Z^p_{0,i}$ and $Z^p_{0,j}$ by $g_{0,ij}$ (resp. $d_{\textup{RID},0,ij}$ and $e^{-d_{\textup{RID},ij}^2/\epsilon}$), and hence the GCL. The histogram of eigenvalues of $\vdmD^{-1}\vdmS$ with $n=1000$ and $p=500,1000,2000$ are shown in Figure \ref{fig:VDMnull2}; the QQ plot of eigenvalues of $\vdmD^{-1}\vdmS$ with $n=1000$ and $p=500,1000,2000$ versus the eigenvalues of symmetric Gaussian random matrix of size $n\times n$ are shown in Figure \ref{fig:VDMnull2QQ}. 

\begin{figure}[h]	
\begin{center}
\includegraphics[width=0.32\textwidth]{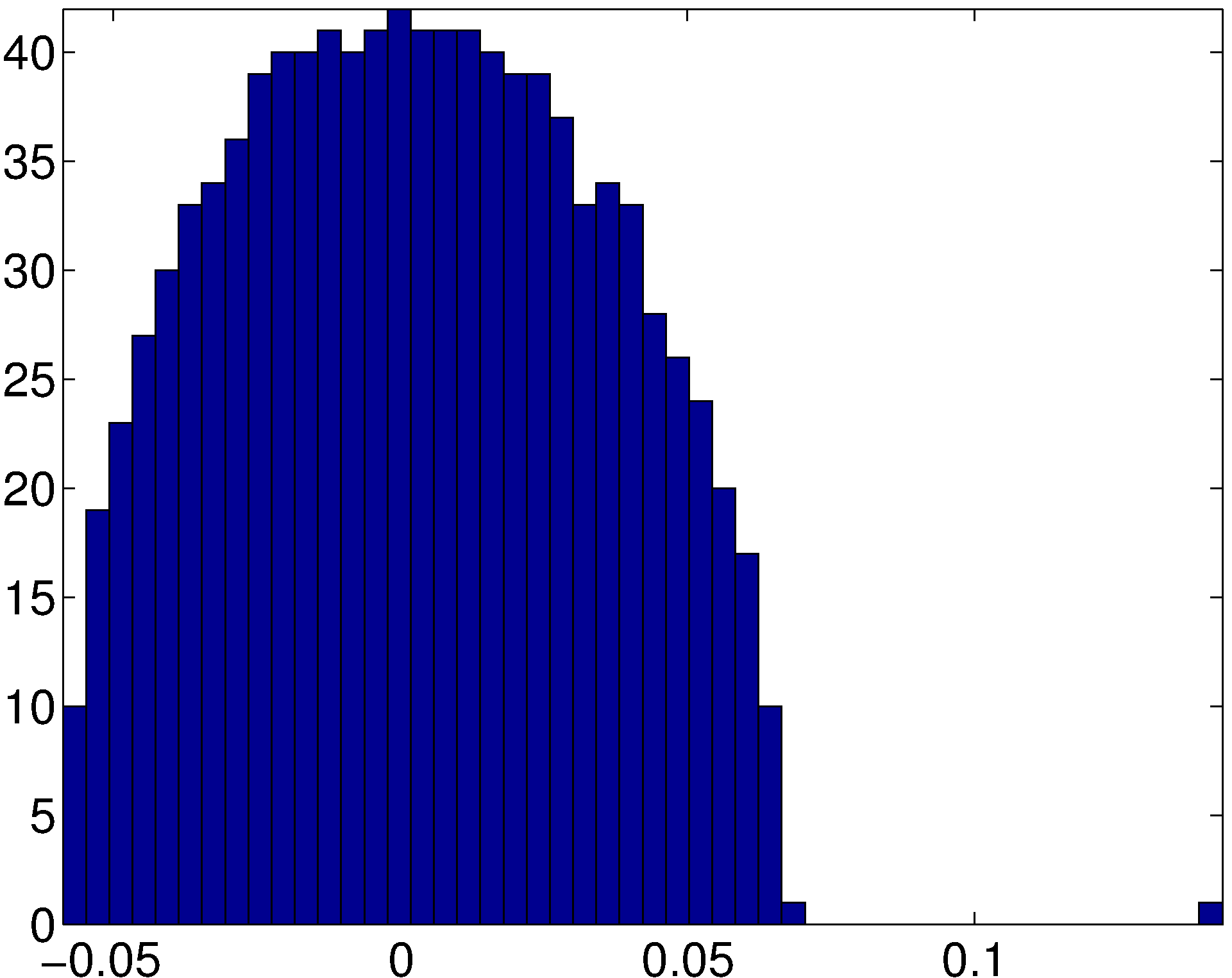}
\includegraphics[width=0.32\textwidth]{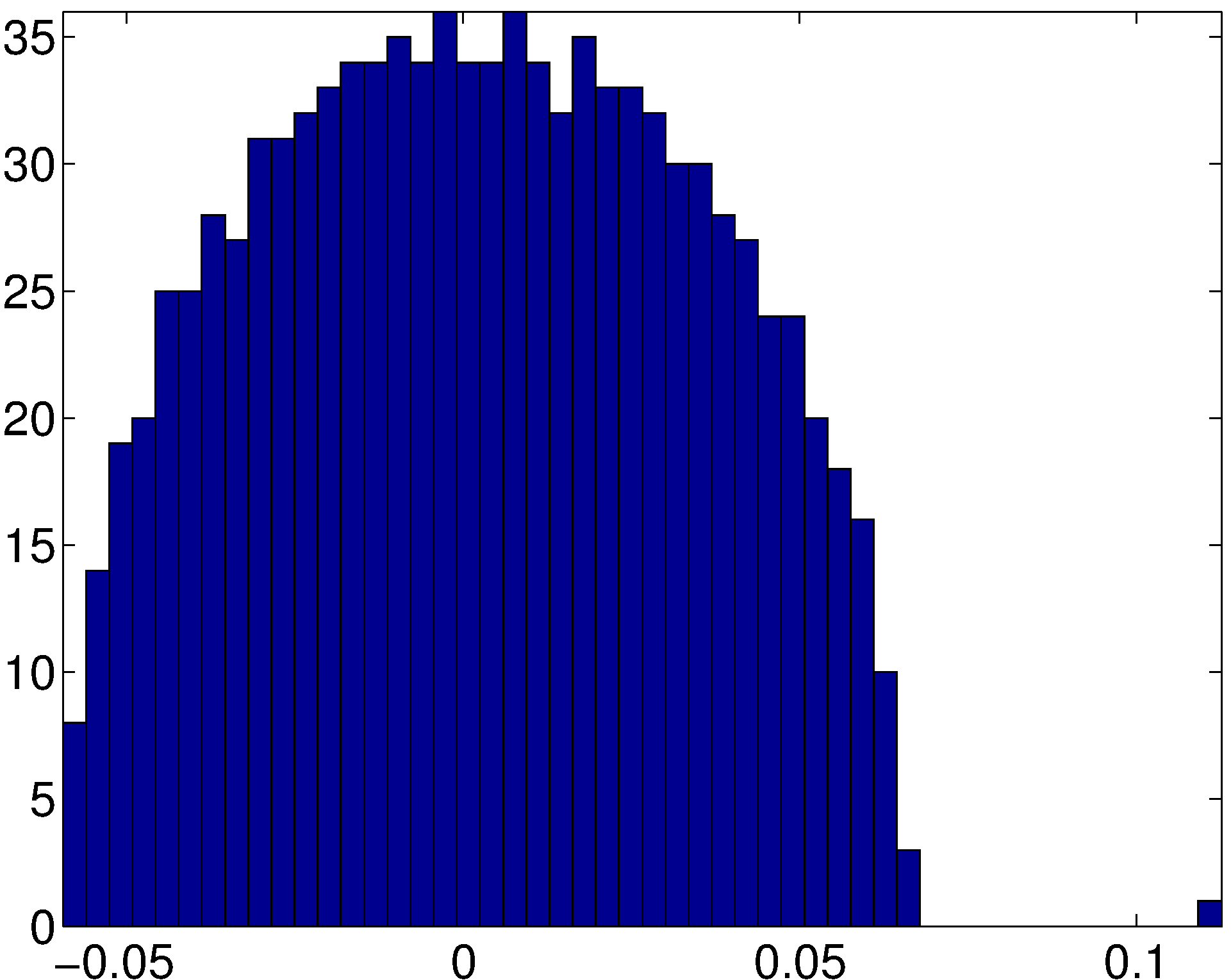}
\includegraphics[width=0.32\textwidth]{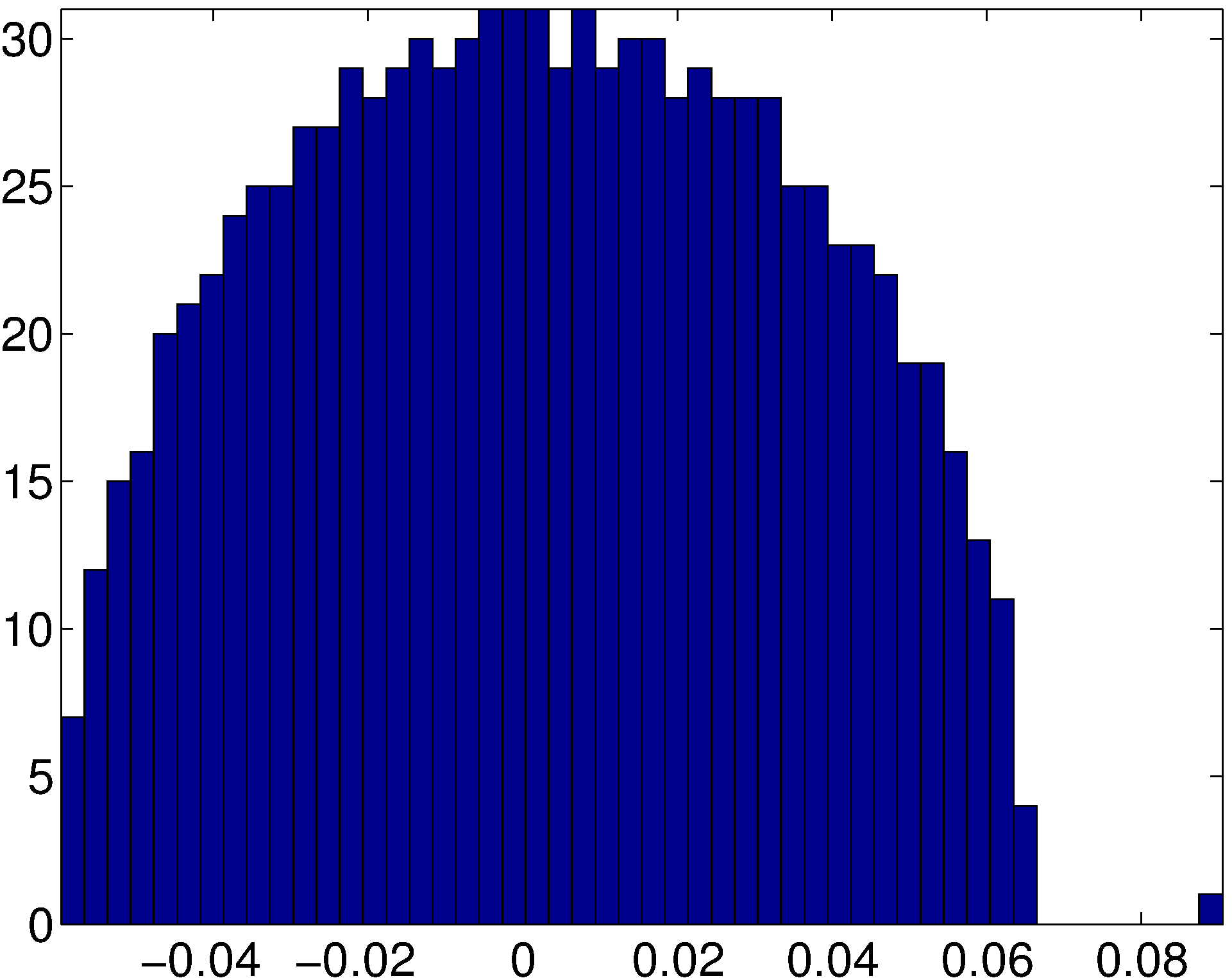}
\end{center}
\caption{
From left to right: The histogram of eigenvalues of $\vdmD^{-1}\vdmS$ defined from $\mathcal{X}_{0,p}$ with $n=1000$, $p=500,1000,2000$ and the affinity $e^{-d_{\textup{RID},ij}^2/\epsilon}$.
}
\label{fig:VDMnull2}
\end{figure}

\begin{figure}[h]	
\begin{center}
\includegraphics[width=0.32\textwidth]{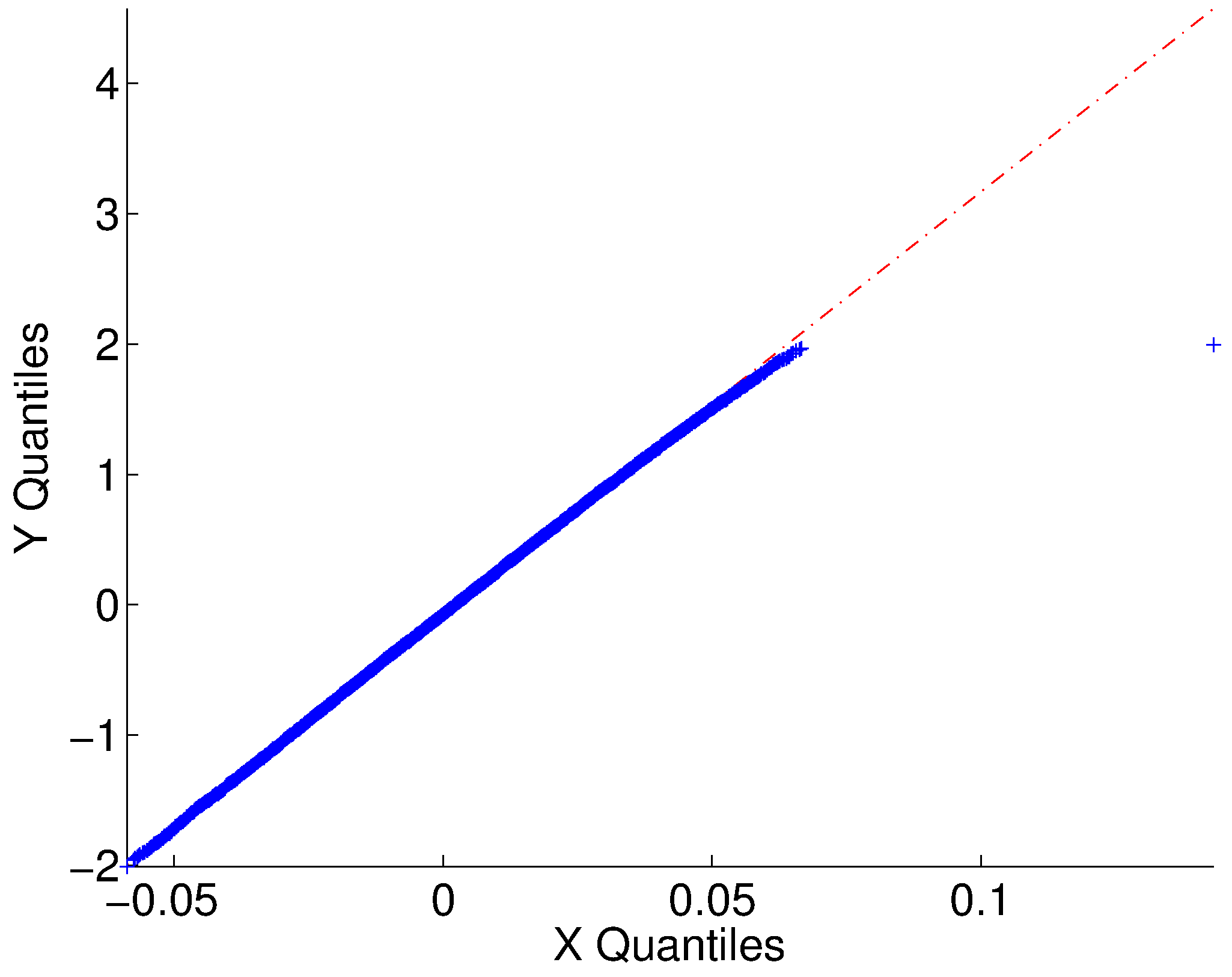}
\includegraphics[width=0.32\textwidth]{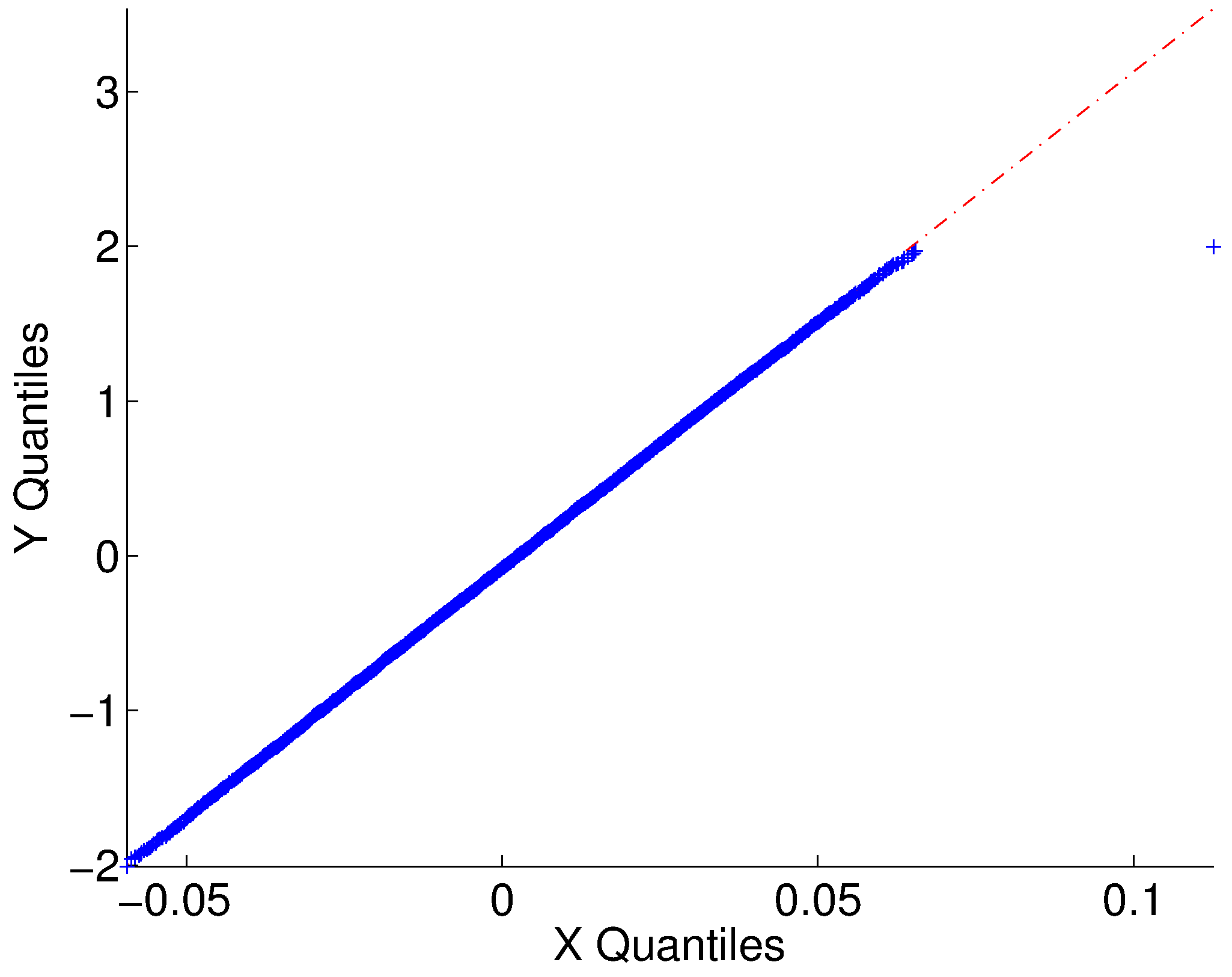}
\includegraphics[width=0.32\textwidth]{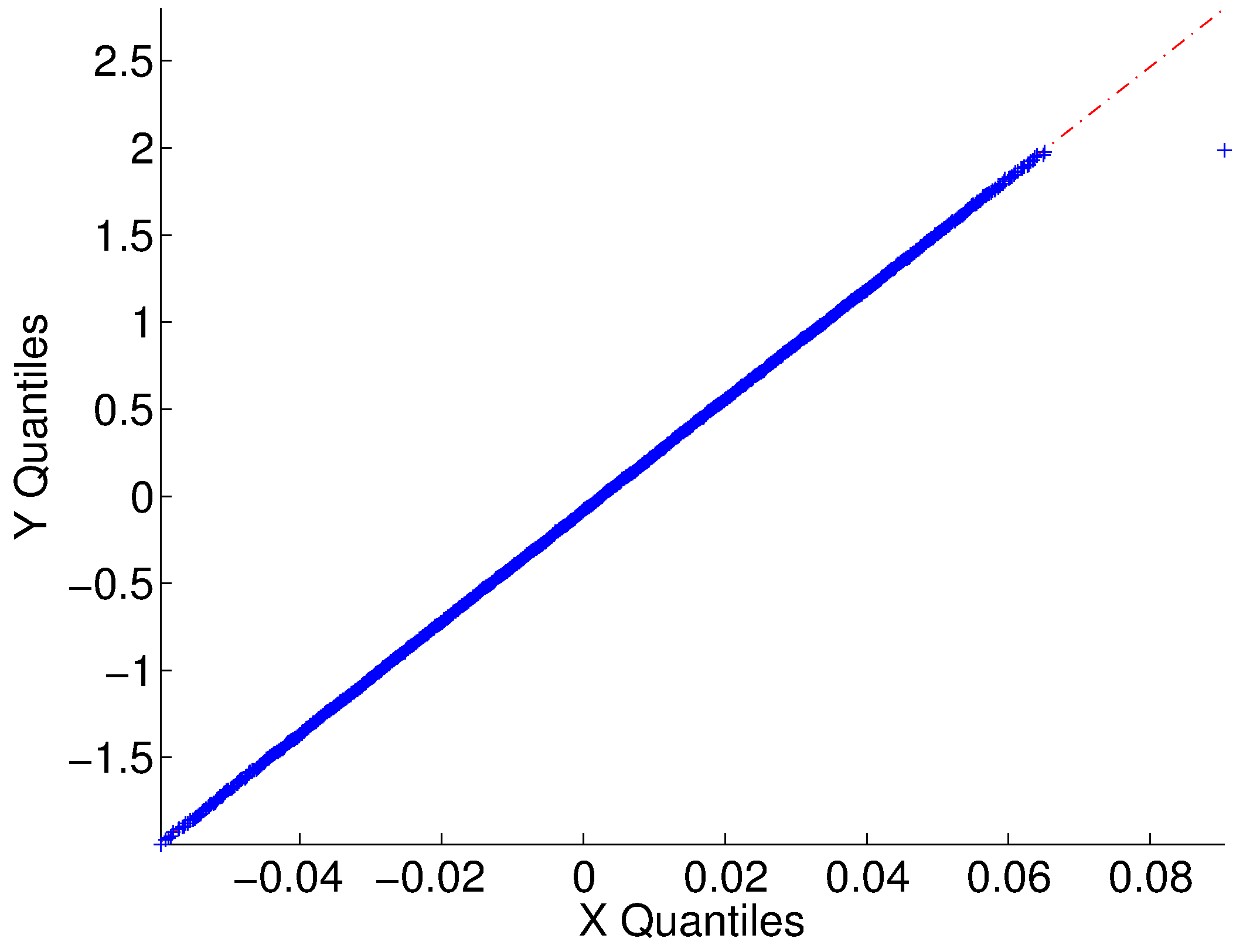}
\end{center}
\caption{
From left to right: the QQplot of eigenvalues of $\vdmD^{-1}\vdmS$ defined from $\mathcal{X}_{0,p}$ with $n=1000$, $p=500,1000,2000$ and the affinity $e^{-d_{\textup{RID},ij}^2/\epsilon}$ versus the eigenvalues of symmetric Gaussian random matrix of size $n\times n$.
}
\label{fig:VDMnull2QQ}
\end{figure}

According to Corollary \ref{coro:controlStieltjesLatentVariablesCaseBlockMatrices}, asymptotically when $n\to \infty$ the distribution of eigenvalues of $\vdmS$ (\ref{Numerical:S:definition}) is deterministic. However, we do not have a quantification of how the LSD looks like.
Furthermore, our subsequent paper \cite{NEKHautieng2014CGLRobust} can be used to shed light on some aspects of the numerical results regarding $\vdmD^{-1}\vdmS$. In particular, using Proposition 2.2 in that paper shows that, asymptotically, the rotationally invariant distance $d_{\textup{RID},ij}/\sqrt{p}$ can be approximated by a constant independent of $i,j$, provided $\log(pn^2)/\sqrt{p}\tendsto 0$. Using Lemma 3.2 in that paper ensures that the spectral distribution of $\vdmD^{-1}\vdmS$ is asymptotically the same as that of the matrix $M$ with block entries $g_{ij}/n$. Corollary \ref{coro:controlStieltjesLatentVariablesCaseBlockMatrices} shows that this matrix has a deterministic spectrum (asymptotically in $n$). In light of the results of Appendix \ref{app:subsec:vdmComps}, it is natural to expect that in the limit where $p\tendsto \infty$, the marginal distribution of $g_{ij}$ is uniform on $SO(2)$. These observations  help explain the shape of the histograms of eigenvalues plotted in Figure \ref{fig:VDMnull2}.

\appendix
\vspace{1cm}
\begin{center}
\textbf{\textsc{APPENDIX}}
\end{center}
\renewcommand{\theequation}{\Alph{section}}
\renewcommand{\theequation}{\Alph{section}-\arabic{equation}}
\renewcommand{\thelemma}{\Alph{section}-\arabic{lemma}}
\renewcommand{\thecorollary}{\Alph{section}-\arabic{corollary}}
\renewcommand{\thesubsection}{\Alph{section}-\arabic{subsection}}
\renewcommand{\thesubsubsection}{\Alph{section}-\arabic{subsection}.\arabic{subsubsection}}
\setcounter{equation}{0}  
\setcounter{lemma}{0}
\setcounter{corollary}{0}
\setcounter{section}{0}

\section{Technical results}
\subsection{Bound on the norm of a subblock of a matrix}
We recall and prove the following simple fact. 
\begin{fact}\label{fact:normOfSubblocksLessThanNormOfMatrix}
Suppose 
$$
T=\begin{pmatrix}
T_{11}& T_{12}\\
T_{21}& T_{22}
\end{pmatrix}\;.
$$
Then
$$
\opnorm{T_{ij}}\leq \opnorm{T}\;.
$$
\end{fact}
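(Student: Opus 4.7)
The plan is to prove the bound by combining an isometric embedding on the domain side with an orthogonal projection on the codomain side; no subtle issue arises here, this is an elementary fact.

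Concretely, fix a block $T_{ij}$ (so $i,j \in \{1,2\}$), and write the row/column partitions of $T$ as $n_1 + n_2$ and $m_1 + m_2$, respectively, with $T_{ij} \in \mathbb{C}^{n_i \times m_j}$. I would take an arbitrary unit vector $v \in \mathbb{C}^{m_j}$ and extend it to a vector $\tilde v \in \mathbb{C}^{m_1+m_2}$ by inserting $v$ into the $j$-th block of coordinates and filling the complementary $m_{3-j}$ coordinates with zeros. This ``pad by zero'' embedding is an isometry, so $\|\tilde v\| = \|v\| = 1$.

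Next, let $P_i$ denote the orthogonal projection of $\mathbb{C}^{n_1+n_2}$ onto the subspace of vectors supported on the $i$-th block of rows. By direct block multiplication, $P_i T \tilde v = T_{ij} v$. Since orthogonal projections have operator norm at most $1$, I get
\[
\|T_{ij} v\| = \|P_i T \tilde v\| \leq \|T \tilde v\| \leq \opnorm{T}\,\|\tilde v\| = \opnorm{T}.
\]
Taking the supremum over unit vectors $v \in \mathbb{C}^{m_j}$ yields $\opnorm{T_{ij}} \leq \opnorm{T}$, which is the desired bound. Since $i,j$ were arbitrary, the fact follows for every sub-block.

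The only step that requires any care is verifying $P_i T \tilde v = T_{ij} v$, which is just the block-multiplication identity
\[
T \tilde v = \begin{pmatrix} T_{11} & T_{12} \\ T_{21} & T_{22}\end{pmatrix}\!\begin{pmatrix} \text{(zeros or $v$)} \\ \text{(the other one)} \end{pmatrix}
\]
read off componentwise; there is no hard part to this proof.
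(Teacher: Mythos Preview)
Your proof is correct and takes essentially the same approach as the paper: both arguments zero-pad the input vector to an isometric embedding and then restrict on the output side. The only cosmetic difference is that the paper uses the bilinear characterization $\opnorm{W}=\sup_{\|u\|=\|v\|=1}|u^*Wv|$ and pads the dual vector $u$ as well, whereas you use the one-sided characterization $\opnorm{W}=\sup_{\|v\|=1}\|Wv\|$ together with an orthogonal projection $P_i$; these are equivalent formulations of the same idea.
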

\begin{proof}
Recall that if $W$ is a $n\times p$ matrix,
$$
\opnorm{W}=\sup_{u\in \mathbb{C}^n, \norm{u}=1}\sup_{v\in \mathbb{C}^p, \norm{v}=1}|u^* W v|\;.
$$
Let us show that $\opnorm{T_{12}}\leq \opnorm{T}$. Suppose that $T$ is $n\times n$ and $T_{12}$ is $d\times m$, where $d+m=n$. Note that 
$$
u^* T_{12} v=(u^*  0_{n-d}^*) T \begin{pmatrix} 0_{d}\\ v \end{pmatrix}=\tilde{u}^* T \tilde{v}\;.
$$
Of course $\norm{\begin{pmatrix} 0_d\\ v \end{pmatrix}}=\norm{v}$ and similary for $\begin{pmatrix} u\\ 0_{n-d} \end{pmatrix}$. 
By definition, for any $u$ and $v$ with unit norm, 
$$
|\tilde{u}^* T \tilde{v}|\leq \sup_{\alpha\in \mathbb{C}^n, \norm{\alpha}=1}\sup_{\beta\in \mathbb{C}^n, \norm{\beta}=1}|\alpha^* T \beta|=\opnorm{T}\;.
$$
So we have shown that 
$$
\opnorm{T_{12}}\leq \opnorm{T}\;.
$$
The same reasoning applies to the other sub-blocks of $T$.
\end{proof}
\subsection{On finite rank perturbations and Stieltjes transforms}

The following lemma is used repeatedly in our proofs. 

\begin{lemma}\label{lemma:controlDiffStieltjesFiniteRankPerturb}
Suppose $A$ and $B$ are Hermitian $n\times n$ matrices. Let $z\in \mathbb{C}^+$ and call $\imag{z}=v>0$. 
Then 
\begin{equation}\label{eq:boundDiffTraceResolventDeltaRank}
\left|\trace{(A-z\id)^{-1}}-\trace{(B-z\id)^{-1}}\right|\leq \frac{\rank{A-B}}{v}\;.
\end{equation}
\end{lemma}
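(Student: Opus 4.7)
My plan is to reduce the bound to the rank-$1$ case by telescoping, then handle rank $1$ by an explicit Sherman–Morrison computation.

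First, since $A-B$ is Hermitian of rank $k := \rank(A-B)$, I would spectrally decompose it as $A-B=\sum_{i=1}^{k}\alpha_i u_i u_i^{*}$ with $\alpha_i\in\RR$ and $\{u_i\}$ orthonormal. Setting $B_0=B$ and $B_i=B_{i-1}+\alpha_i u_i u_i^{*}$, one has $B_k=A$ and each increment is a rank-$1$ Hermitian perturbation. By the triangle inequality it then suffices to prove
\[
\left|\trace(B_i-z\id)^{-1}-\trace(B_{i-1}-z\id)^{-1}\right|\leq \frac{1}{v}
\]
for each $i$, and summing over $i$ gives the claimed bound $k/v$.

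For the rank-$1$ step, I would apply Sherman–Morrison to $B_i = B_{i-1}+\alpha_i u_i u_i^{*}$, take the trace, and obtain
\[
\trace(B_i-z\id)^{-1}-\trace(B_{i-1}-z\id)^{-1}
\;=\; -\,\frac{\alpha_i\,\phi'(z)}{1+\alpha_i\,\phi(z)},
\]
where $\phi(z):=u_i^{*}(B_{i-1}-z\id)^{-1}u_i$. Since $B_{i-1}$ is Hermitian, $\phi$ is the Stieltjes transform of a positive measure on $\RR$ of total mass $\|u_i\|^2=1$; in particular, writing $\phi(z)=\int(\mu-z)^{-1}\,d\nu(\mu)$ one has the elementary pointwise identity $\imag\phi(z)=v\int|\mu-z|^{-2}d\nu(\mu)$ and $|\phi'(z)|\leq \int|\mu-z|^{-2}d\nu(\mu)$, whence $|\phi'(z)|\leq \imag\phi(z)/v$. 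Moreover $|1+\alpha_i\phi(z)|\geq |\imag(1+\alpha_i\phi(z))|=|\alpha_i|\,\imag\phi(z)$ because $\imag\phi(z)>0$ for $v>0$. Combining these gives the rank-$1$ bound $1/v$ regardless of the sign or magnitude of $\alpha_i$.

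The main technical point is the rank-$1$ estimate: the direct resolvent identity $(A-z\id)^{-1}-(B-z\id)^{-1}=(A-z\id)^{-1}(B-A)(B-z\id)^{-1}$ only yields a bound proportional to $\|A-B\|_{\mathrm{op}}/v^2$, which is useless here since we want a bound independent of the size of the perturbation. The trick is that the resolvent difference has a \emph{ratio} structure in which the $1/v^{2}$ coming from $\phi'$ is exactly cancelled by the $\imag\phi$ in the denominator produced by $1+\alpha_i\phi$; so the scale of $\alpha_i$ drops out entirely. Everything else (spectral decomposition of $A-B$, telescoping, triangle inequality) is routine.
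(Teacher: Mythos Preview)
Your proof is correct and follows essentially the same route as the paper: spectrally decompose the Hermitian difference $A-B$, telescope, and bound each rank-$1$ step by $1/v$. The only difference is that the paper invokes Lemma~2.6 of Silverstein--Bai (1995) for the rank-$1$ bound, whereas you supply a self-contained Sherman--Morrison argument; your computation is exactly what underlies that cited lemma.
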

\begin{proof}
Since $\Delta=A-B$ is Hermitian it is diagonalizable. Therefore, $\Delta=\sum_{k=1}^r \tau_k q_k q_k^*$, where $\tau_k\in \mathbb{R}$, $q_k \in \mathbb{C}^n$ and $r$ is the rank of $\Delta$. Let us call, if $1\leq j\leq r$ $P\Delta_j=\sum_{k=1}^j \tau_k q_k q_k^*$ and $P\Delta_0=0_{n\times n}$. 
We have 
\begin{align*}
(A-z\id)^{-1}-(B-z\id)^{-1}&\,=(B+\Delta-z\id)^{-1}-(B-z\id)^{-1}\\
&\,=\sum_{j=0}^{r-1} \left[(B+P\Delta_{r-j}-z\id)^{-1}-(B+P\Delta_{r-j-1}-z\id)^{-1}\right]\;.
\end{align*}
Of course, $B+P\Delta_{r-j-1}$ is a rank-1 perturbation of $B+P\Delta_{r-j}$. Using Lemma 2.6 of \cite{silversteinbai95}, we therefore have 
$$
\left|\trace{(B+P\Delta_{r-j}-z\id)^{-1}-(B+P\Delta_{r-j-1}-z\id)^{-1}}\right|\leq \frac{1}{v}\;.
$$
Therefore, since 
$$
\left|\trace{(A-z\id)^{-1}-(B-z\id)^{-1}}\right|\leq \sum_{j=0}^{r-1} \left|\trace{(B+P\Delta_{r-j}-z\id)^{-1}-(B+P\Delta_{r-j-1}-z\id)^{-1}}\right|\;,
$$
and the sum on the right hand side contains $r$ (i.e $\rank{A-B}$ terms), the result stated in the lemma follows. 
\end{proof}

\subsection{Invariance and moments}

\begin{lemma}\label{lemma:invarianceAndMomentCsq}
Let $B$ be a random $d\times d$ matrix. Let us call $r_j$, $1\leq j \leq d$ the rows of $B$, and $c_j$'s the columns of $B$. Suppose that 
\begin{enumerate}
\item the rows of $B$ are exchangeable and for any $j\neq k$ $(r_j,r_k)\equalInLaw (r_j,-r_k)$.
\item the columns of $B$ are exchangeable and for any $j\neq k$ $(c_j,c_k)\equalInLaw (c_j,-c_k)$.
\item for any $j$, $\scov{r_j}$ exists.
\end{enumerate}
Then we have 
\begin{enumerate}
\item $\Exp{r_j}=0$, for all $j$.
\item $\Exp{r_j r_k\trsp}=0_{d\times d}$ when $j\neq k$.
\item $\Exp{r_j r_j\trsp}=\gamma \id_d$ for some $\gamma$.
\end{enumerate}
\end{lemma}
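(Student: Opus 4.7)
\smallskip
\noindent\textbf{Proof proposal.} The plan is to derive each of the three conclusions directly from the stated invariance hypotheses, using the fact that the existence of $\scov{r_j}$ in assumption (3) forces the finiteness of all the first and mixed second moments we will need. The whole argument is a sequence of ``symmetry forces mean zero'' or ``symmetry forces constant diagonal'' steps, carried out in the order of the conclusions.

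For conclusion (1), fix any $j$ and choose $k\neq j$ (the case $d=1$ is vacuous). Applying hypothesis (1) and reading off marginals gives $r_k\equalInLaw -r_k$, so $\Exp{r_k}=-\Exp{r_k}$, hence $\Exp{r_k}=0$; by row exchangeability this transfers to every $j$. For conclusion (2), for $j\neq k$ the same identity $(r_j,r_k)\equalInLaw(r_j,-r_k)$ applied inside the expectation of the bilinear map $(x,y)\mapsto xy\trsp$ gives $\Exp{r_j r_k\trsp}=\Exp{r_j(-r_k)\trsp}=-\Exp{r_j r_k\trsp}$, so the cross-covariance vanishes.

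For conclusion (3), I would write $\Exp{r_j r_j\trsp}$ entry-wise: the $(l,m)$ entry is $\Exp{B(j,l)B(j,m)}$, where $B(j,l)$ is also the $j$-th coordinate of the column $c_l$. The diagonal entries $\Exp{B(j,l)^2}$ are seen to be independent of $l$ by column exchangeability (hypothesis (2)), giving the common value $\gamma$. For $l\neq m$, applying the column sign-flip invariance $(c_l,c_m)\equalInLaw(c_l,-c_m)$ and reading off the $j$-th coordinates yields $(B(j,l),B(j,m))\equalInLaw(B(j,l),-B(j,m))$, so $\Exp{B(j,l)B(j,m)}=-\Exp{B(j,l)B(j,m)}=0$.

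There is no substantive obstacle here; the only points requiring care are bookkeeping (making sure each invariance is applied at the correct pair of indices), using Cauchy--Schwarz together with hypothesis (3) to justify existence and manipulation of the moments involved, and noting that the degenerate case $d=1$ needs to be excluded from the cross-row statements because the hypothesis $(r_j,r_k)\equalInLaw(r_j,-r_k)$ is vacuous there.
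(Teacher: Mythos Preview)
Your proposal is correct and follows essentially the same route as the paper's own proof: sign-flip invariance forces the mean and cross-covariances to vanish, and column invariance forces $\scov{r_j}$ to be scalar. One small omission worth noting: after using column exchangeability to show the diagonal entries $\Exp{B(j,l)^2}$ are independent of $l$, you should also invoke row exchangeability to conclude that this common value $\gamma$ does not depend on $j$; the paper makes this step explicit, and it is needed if conclusion~(3) is read as asserting a single $\gamma$ for all rows.
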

\begin{proof}
Since we assume that $\scov{r_j}$ exists for any $j$, it is clear that $\Exp{r_j r_k\trsp}$ exists by the Cauchy-Schwarz inequality. 
Since $(r_j,r_k)\equalInLaw (r_j,-r_k)$ for $j\neq k$, we have 
$$
r_jr_k\trsp \equalInLaw -r_j r_k\trsp \text{ and } r_j\equalInLaw - r_j\;.
$$
Therefore, 
$$
\Exp{r_j r_k\trsp}=-\Exp{r_j r_k\trsp}=0_{d\times d} \text{ and } \Exp{r_j}=0\;.
$$
On the other hand, $\scov{r_j}(k,l)=\Exp{c_k(j)c_l(j)}$. If $k\neq l$, our assumption that for any $j\neq k$ $(c_j,c_k)\equalInLaw (c_j,-c_k)$ guarantees by the same argument as above that $\Exp{c_k(j)c_l(j)}=0$. On the other hand, since the columns are exchangeable, it is clear that $\scov{r_j}(k,k)=\Exp{c_k(j)^2}=\Exp{c_l(j)^2}=\scov{r_j}(l,l)$. So 
$$
\Exp{r_jr_j\trsp}=\gamma_j \id_d\;.
$$ 
Our assumption that the rows are exchangeable guarantees that for all $j\neq k$, $\gamma_j=\gamma_k=\gamma$.
\end{proof}

\section{Preliminaries for Theorem \ref{thm:replacementByGaussian}}

\begin{lemma}\label{lemma:detApproxBlockStieltjes}
Let us call $A_{12}$ be a $d\times (n-1)d$ real random matrix and $A_{21}=A_{12}\trsp$. Let us assume that $\exists R \in \mathbb{R}^+$ such that 
for any symmetric, real, deterministic matrix $\Gamma$,
\begin{align*}
\Exp{\opnorm{A_{12} (\Gamma-z\id)^{-1} A_{12}\trsp-\Exp{A_{12} (\Gamma-z\id)^{-1} A_{12}\trsp}}}&\leq \frac{R}{v}\;,\\
\Exp{\opnorm{A_{12} (\Gamma-z\id)^{-2} A_{12}\trsp-\Exp{A_{12} (\Gamma-z\id)^{-2} A_{12}\trsp}}}&\leq \frac{R}{v^2}\;,\text{ and }\\
\opnorm{\Exp{A_{12}(\Gamma-z\id)^{-2}A_{21}}}\leq K(z)\;,
\end{align*}
for a given function $K$, where $z\in \mathbb{C}^+$ with $\imag{z}=v>0$. 
For $\totalSize=nd$, let $T_n$ be the $N\times N$ matrix
$$
T_n=\begin{pmatrix}
0_{d\times d}&A_{12}\\
A_{21}&Z_{22}
\end{pmatrix}
$$
where $Z_{22}$ is a real, symmetric and deterministic matrix. 
Let 
$$
{\cal W}_n^{11}(z)=\left[-z\id_d-\Exp{A_{12}(Z_{22}-z\id)^{-1}A_{21}}\right]^{-1}\;,
$$
and
$$
{\cal L}(z)=\trace{(Z_{22}-z\id)^{-1}}-
\trace{\left[z\id_d+\Exp{A_{12} (Z_{22}-z\id)^{-1}A_{21}}\right]^{-1}\Exp{A_{12} (Z_{22}-z\id)^{-2}A_{21}}}.
$$ 
Then, under our assumptions,
$$
\left|\Exp{\trace{(T_n-z\id)^{-1}}-\trace{{\cal W}_n^{11}(z)}-{\cal L}(z)}\right|\leq  d(2+K(z))\frac{R}{v^3}\;.
$$
\end{lemma}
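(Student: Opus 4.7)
The approach is to apply the block (Schur complement) inversion formula to $T_n-z\id$, decompose the resulting trace, and then replace the random $(1,1)$-block by its deterministic analogue via a resolvent identity. Writing
$$
T_n - z\id = \begin{pmatrix} -z\id_d & A_{12} \\ A_{21} & Z_{22}-z\id \end{pmatrix},
$$
the Schur complement gives that the $(1,1)$-block of $(T_n-z\id)^{-1}$ is exactly $W_{11}(z):=[-z\id_d - A_{12}(Z_{22}-z\id)^{-1}A_{21}]^{-1}$ and its $(2,2)$-block is $(Z_{22}-z\id)^{-1}+(Z_{22}-z\id)^{-1}A_{21}W_{11}A_{12}(Z_{22}-z\id)^{-1}$. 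Using cyclicity of the trace,
$$
\trace{(T_n-z\id)^{-1}} = \trace{W_{11}} + \trace{(Z_{22}-z\id)^{-1}} + \trace{W_{11}\, A_{12}(Z_{22}-z\id)^{-2}A_{21}}.
$$
On the other hand, observing that ${\cal W}_n^{11}=-[z\id_d+H]^{-1}$ with $H=\Exp{A_{12}(Z_{22}-z\id)^{-1}A_{21}}$ and $G_2=\Exp{A_{12}(Z_{22}-z\id)^{-2}A_{21}}$, one rewrites $\trace{{\cal W}_n^{11}} + {\cal L}(z) = \trace{{\cal W}_n^{11}} + \trace{(Z_{22}-z\id)^{-1}} + \trace{{\cal W}_n^{11} G_2}$.

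\paragraph{Key identity and deterministic bounds.} The first plan move is the standard resolvent identity $W_{11}-{\cal W}_n^{11} = W_{11}\, Y\, {\cal W}_n^{11}$, where $Y := A_{12}(Z_{22}-z\id)^{-1}A_{21} - H$ is the centered fluctuation controlled in Assumption~B1 by $R/v$. I would then establish the deterministic bounds $\opnorm{W_{11}}\le 1/v$ and $\opnorm{{\cal W}_n^{11}}\le 1/v$. The first is Fact~B-1 applied to the block extraction of $(T_n-z\id)^{-1}$, whose norm is at most $1/v$ because $T_n$ is real symmetric. The second follows because $\imag{-z\id_d-H} = -v\id_d - \imag H$ with $\imag H = v\Exp{A_{12}[(Z_{22}-\Re z)^2+v^2]^{-1}A_{21}}\succeq 0$, so the imaginary part is bounded above by $-v\id_d$. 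These bounds, together with $\Exp{\opnorm{Y}}\le R/v$, give $\Exp{\opnorm{W_{11}-{\cal W}_n^{11}}}\le R/v^3$.

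\paragraph{Assembling the error bound.} The difference $\Exp{\trace{(T_n-z\id)^{-1}}} - \trace{{\cal W}_n^{11}} - {\cal L}(z)$ splits into two pieces: $\Exp{\trace{W_{11}-{\cal W}_n^{11}}}$ and $\Exp{\trace{W_{11}\, A_{12}(Z_{22}-z\id)^{-2}A_{21}}}-\trace{{\cal W}_n^{11} G_2}$. The first piece is at most $d\,\Exp{\opnorm{W_{11}-{\cal W}_n^{11}}}\le dR/v^3$. For the second, I would write
$$
\Exp{\trace{W_{11}\, A_{12}(Z_{22}-z\id)^{-2}A_{21}}}-\trace{{\cal W}_n^{11} G_2} = \trace{(\Exp{W_{11}}-{\cal W}_n^{11})G_2} + \Exp{\trace{W_{11}\, Y_2}},
$$
with $Y_2 = A_{12}(Z_{22}-z\id)^{-2}A_{21}-G_2$. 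The first summand is bounded by $d\,\opnorm{\Exp{W_{11}}-{\cal W}_n^{11}}\cdot K(z)\le dK(z)R/v^3$ using Assumption~B1.2. For the second, using $\Exp{Y_2}=0$ and iterating the resolvent identity gives $\Exp{\trace{W_{11} Y_2}} = \Exp{\trace{W_{11}\, Y\, {\cal W}_n^{11}\, Y_2}}$, whose absolute value is at most $(d/v^2)\,\Exp{\opnorm{Y}\opnorm{Y_2}}$. Summing everything produces $d(2+K(z))R/v^3$.

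\paragraph{Main obstacle.} The delicate step is controlling the cross term $\Exp{\opnorm{Y}\opnorm{Y_2}}$ using only the first-moment hypotheses Assumption~B1.1 and Assumption~B1.3. The cleanest route is to apply Cauchy-Schwarz and invoke a second-moment strengthening of Assumption~B1 (automatic in the Gaussian or uniformly bounded models that the paper ultimately targets), so that $\Exp{\opnorm{Y}\opnorm{Y_2}} \le (\Exp{\opnorm{Y}^2})^{1/2}(\Exp{\opnorm{Y_2}^2})^{1/2}$ is of order $R^2/v^3$. Alternatively, one could expand $A_{12}(Z_{22}-z\id)^{-2}A_{21}$ as $\partial_z [A_{12}(Z_{22}-z\id)^{-1}A_{21}]$ and exploit analyticity in $z$ to convert the product-norm bound into one involving only $\opnorm{Y}$ through a Cauchy-integral argument on the upper half-plane. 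Either way, the structural bookkeeping of Steps 1--3 is straightforward; the quantitative control of this last cross term is where the real work is concentrated.
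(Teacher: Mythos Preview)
Your decomposition via the Schur complement is exactly the one the paper uses, and your first two error terms are handled correctly. The ``main obstacle'' you identify, however, is self-inflicted: you do not need to iterate the resolvent identity on $\Exp{\trace{W_{11}Y_2}}$, and hence you never need to control the cross term $\Exp{\opnorm{Y}\opnorm{Y_2}}$ at all.

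The point is that $\opnorm{W_{11}}\le 1/v$ holds \emph{deterministically} (you already proved this). Therefore, by Weyl's majorant theorem,
\[
\bigl|\trace{W_{11}Y_2}\bigr|\le d\,\opnorm{W_{11}}\,\opnorm{Y_2}\le \frac{d}{v}\,\opnorm{Y_2}
\]
holds pointwise, and taking expectations together with Assumption~B1.3 gives
\[
\bigl|\Exp{\trace{W_{11}Y_2}}\bigr|\le \Exp{\bigl|\trace{W_{11}Y_2}\bigr|}\le \frac{d}{v}\cdot\frac{R}{v^2}=\frac{dR}{v^3}\,.
\]
No second-moment strengthening, no Cauchy--Schwarz, no analyticity argument is needed. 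This is precisely how the paper handles this term (there written as $(z\id_d+A_{12}C^{-1}A_{21})^{-1}[A_{12}C^{-2}A_{21}-\Xi]$). Your detour through $\Exp{\trace{W_{11}Y{\cal W}_n^{11}Y_2}}$ throws away the information that $W_{11}$ is uniformly bounded and replaces a trivial estimate by a genuinely hard one. Once you drop that iteration, your three pieces give exactly $dR/v^3+dK(z)R/v^3+dR/v^3=d(2+K(z))R/v^3$, matching the statement.
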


\begin{proof}
We call 
$$
(T_n-z\id)^{-1}=\begin{pmatrix}
T_n^{11}(z)&T_n^{12}(z)\\
T_n^{21}(z)&T_n^{22}(z)
\end{pmatrix}\;,
$$	
where $T_n^{11}(z)$ is $d\times d$ and $T_n^{22}(z)$ is $d(n-1)\times d(n-1)$.
Using the standard block inversion formula (see \cite{hj}, p.18), we see that the top-left $d\times d$ block of $(T_n-z\id)^{-1}$ is 
$$
T_n^{11}(z)=(-z\id_d-A_{12}(Z_{22}-z\id)^{-1}A_{21})^{-1}\;.
$$
We note that $(Z_{22}-z\id)^{-1}=S_{1}+iS_{2}$ where $S_{1}$ and $S_{2}$ are real symmetric matrices. Furthermore, after diagonalizing $Z_{22}$ it is clear that $S_2$ is positive semi-definite. So we have
$$
S_3=z\id_d+A_{12}(Z_{22}-z\id)^{-1}A_{21}=(A_{12}S_1 A_{12}\trsp+\myreal{z}\id)+i(A_{12}S_2 A_{12}\trsp+v\id)\;.
$$ 
Of course, the eigenvalues of $A_{12}S_2 A_{12}\trsp+v\id$ are greater than $v$: the matrix $A_{12}S_2 A_{12}\trsp$ is positive semi-definite. Therefore, by applying the Fan-Hoffman Theorem (Proposition III.5.1 in \cite{bhatia97}) to $-iS_3$, we see that the singular values of $S_3$ are all greater than $v$, so that 
$
\opnorm{S_3^{-1}}\leq \frac{1}{v}\;.
$
This shows that $\opnorm{T_n^{11}(z)}\leq 1/v$. The same argument also yields $\opnorm{{\cal W}_n^{11}(z)}\leq 1/v$.

Since $C^{-1}-D^{-1}=C^{-1}(D-C)D^{-1}$,
$$
T_n^{11}(z)-{\cal W}_n^{11}(z)=T_n^{11}(z)\left[A_{12}(Z_{22}-z\id)^{-1}A_{21}-\Exp{A_{12}(Z_{22}-z\id)^{-1}A_{21}}\right]{\cal W}_n^{11}(z)\;.
$$ 
So it is clear that 
$$
\Exp{\opnorm{T_n^{11}(z)-{\cal W}_n^{11}(z)}}\leq\frac{1}{v^2}\frac{R}{v}\;.
$$
By Weyl's majorant theorem (\cite{bhatia97}, Theorem II.3.6), we conclude that 
$$
\Exp{\left|\trace{T_n^{11}(z)-{\cal W}_n^{11}(z)}\right|}\leq \frac{d}{v^2}\frac{R}{v}\;,
$$
which clearly leads to 
$$
\left|\Exp{\trace{T_n^{11}(z)-{\cal W}_n^{11}(z)}}\right|\leq \frac{d}{v^2}\frac{R}{v}\;.
$$

Let us now work on 
$$
T_n^{22}(z)=\left(Z_{22}-z\id+\frac{1}{z}A_{21}A_{12}\right)^{-1}\;,
$$
the bottom-right $(n-1)d\times (n-1)d$ diagonal block of $(T_n-z\id)^{-1}$.
Since $A_{21}=A_{12}\trsp$ is $(n-1)d\times d$, we see that $A_{21}A_{12}$ is a rank-at-most-$d$ matrix of size $(n-1)d\times (n-1)d$. The Sherman-Morrison-Woodbury formula (\cite{hj}, p.19) gives, if $B=(C+\frac{1}{z}X X\trsp)$, with $C$ a $(n-1)d\times (n-1)d$ matrix and $X$ a $(n-1)d\times d$ matrix, 
$$
B^{-1}=C^{-1}-C^{-1}X(z\id_d+X\trsp C^{-1}X)^{-1}X\trsp C^{-1}\; ,
$$
and hence
$$
\trace{B^{-1}}-\trace{C^{-1}}=-\trace{(z\id_d+X\trsp C^{-1}X)^{-1}X\trsp C^{-2}X}\;.
$$
Note that inside the trace on the right-hand side we have two $d\times d$ matrix. For us $C=Z_{22}-z\id$ and $X=A_{21}$. We have seen above that 
$$
\opnorm{(z\id_d+A_{12} C^{-1}A_{21})^{-1}}\leq \frac{1}{v}\;.
$$
Hence, using our assumption on $\Exp{\opnorm{A_{12} (\Gamma-z\id)^{-2} A_{12}\trsp-\Exp{A_{12} (\Gamma-z\id)^{-2} A_{12}\trsp}}}$ as well as Weyl's majorant theorem, we have 
$$
\Exp{\left|\trace{(z\id_d+A_{12} C^{-1}A_{21})^{-1}\left[A_{21} C^{-2}A_{21}-\Exp{A_{12} C^{-2}A_{21}}\right]}}\right|\leq \frac{d}{v}\frac{R}{v^2}\;.
$$
Let us call $\Xi=\Exp{A_{12} C^{-2}A_{21}}$.
Of course, 
\begin{gather*}
\left[(z\id_d+A_{12} C^{-1}A_{21})^{-1}-(z\id_d+\Exp{A_{12} C^{-1}A_{21}})^{-1}\right]\Xi=\\
(z\id_d+A_{12} C^{-1}A_{21})^{-1}\left[\Exp{A_{12} C^{-1}A_{21}}-A_{12} C^{-1}A_{21}\right](z\id_d+\Exp{A_{12} C^{-1}A_{21}})^{-1}\Xi\;.
\end{gather*}
Therefore, since we have assumed that $\opnorm{\Xi}\leq K(z)$, we have 
$$
\opnorm{\left[(z\id_d+A_{12} C^{-1}A_{21})^{-1}-(z\id_d+\Exp{A_{12} C^{-1}A_{21}})^{-1}\right]\Xi}\leq \frac{K(z)}{v^2}\opnorm{\Exp{A_{12} C^{-1}A_{21}}-A_{12} C^{-1}A_{21}}\;.
$$
We have established that 
$$
\Exp{\left|\trace{\left[(z\id_d+A_{12} C^{-1}A_{21})^{-1}-(z\id_d+\Exp{A_{12} C^{-1}A_{21}})^{-1}\right]}\Exp{A_{12} C^{-2}A_{21}}\right|}\leq d\frac{K
(z)}{v^2}\frac{R}{v}\;.
$$
So we finally conclude that, if 
$$
\Delta=(z\id_d+A_{12} C^{-1}A_{21})^{-1}A_{12} C^{-2}A_{21}-(z\id_d+\Exp{A_{12} C^{-1}A_{21}})^{-1}\Exp{A_{12} C^{-2}A_{21}}\;,
$$
we have 
$$
\Exp{\left|\trace{\Delta}\right|}\leq \left(\frac{d}{v^2}+\frac{dK(z)}{v^2}\right)\frac{R}{v}\;.
$$
\end{proof}

We are now in position to state our ``strip-replacement'' theorem.
\begin{theorem}\label{thm:replaceFirstBlockRow}
Let us consider the $N \times N$ real symmetric matrices  
$$
T_n(A)=\begin{pmatrix}
0_{d\times d}&A_{12}\\
A_{21}&Z_{22}
\end{pmatrix}\;, \text{ and }
T_n(B)=\begin{pmatrix}
0_{d\times d}&B_{12}\\
B_{21}&Z_{22}
\end{pmatrix}\;,
$$
where, $A_{12},B_{12}\in\mathbb{R}^{d\times (N-d)}$, $A_{21}=A_{12}\trsp$ and $B_{21}=B_{12}\trsp$, and $Z_{22}$ is a deterministic symmetric matrix. 
Suppose $A_{12}$ and $B_{12}$ satisfy the assumptions of Lemma \ref{lemma:detApproxBlockStieltjes}. 
Suppose further that for any deterministic vector $u\in\mathbb{R}^{N-d}$, 
$$
\Exp{(A_{12}u)(A_{12}u)\trsp}=\Exp{(B_{12}u)(B_{12}u)\trsp}\;.
$$ 
Then 
\begin{equation}\label{eq:keyApproxCoro}
\left|\Exp{\trace{(T_n(A)-z\id)^{-1}}-\trace{(T_n(B)-z\id)^{-1}}}\right|\leq  d(2+K(z))\frac{2 R}{v^3}\;.
\end{equation}
The same is true when $Z_{22}$ is assumed to be random but independent of $A_{12}$ and $B_{12}$. 
\end{theorem}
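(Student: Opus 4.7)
The plan is to apply Lemma \ref{lemma:detApproxBlockStieltjes} separately to $T_n(A)$ and $T_n(B)$, and then argue that the deterministic proxies $\operatorname{tr}({\cal W}_n^{11}(z)) + {\cal L}(z)$ coincide for the two matrices thanks to the matching-quadratic-form hypothesis. The theorem then follows from the triangle inequality: each of the two expected traces is within $d(2+K(z))R/v^3$ of the common deterministic quantity, so their difference is bounded by $2d(2+K(z))R/v^3$.

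The key step is therefore to verify that the proxy quantities depend on $A_{12}$ only through the second moments that are shared with $B_{12}$. Inspecting the formulas for ${\cal W}_n^{11}(z)$ and ${\cal L}(z)$, the only $A_{12}$-dependence is through $\Exp{A_{12}(Z_{22}-z\id)^{-1}A_{21}}$ and $\Exp{A_{12}(Z_{22}-z\id)^{-2}A_{21}}$. Because $Z_{22}$ is real symmetric, diagonalizing $Z_{22}=U\Lambda U\trsp$ with $U$ real orthogonal shows that both $(Z_{22}-z\id)^{-1}$ and $(Z_{22}-z\id)^{-2}$ split as $M_1+iM_2$ with $M_1,M_2$ real symmetric (the real and imaginary parts of the diagonal matrices $(\Lambda-z\id)^{-1}$ and $(\Lambda-z\id)^{-2}$ conjugated by the same real $U$). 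Hence it suffices to prove that
\[
\Exp{A_{12}MA_{12}\trsp}=\Exp{B_{12}MB_{12}\trsp}
\]
for every real symmetric $M\in\RR^{(N-d)\times(N-d)}$. The rank-one case $M=uu\trsp$ is exactly the assumption of the theorem. Polarization (applied to the identity for $u+v$ and for $u-v$) extends the equality to $M=uv\trsp+vu\trsp$, and a spectral decomposition of an arbitrary real symmetric $M$ together with linearity finishes this step.

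Once the two proxies are seen to be identical, Lemma \ref{lemma:detApproxBlockStieltjes} applied in turn to $T_n(A)$ and $T_n(B)$ with the shared value of $K(z)$ gives
\[
\bigl|\Exp{\trace{(T_n(A)-z\id)^{-1}}-\trace{(T_n(B)-z\id)^{-1}}}\bigr|\leq 2 d(2+K(z))\frac{R}{v^3},
\]
which is exactly \eqref{eq:keyApproxCoro}. For the final assertion, when $Z_{22}$ is random but independent of $A_{12}$ and $B_{12}$, we first condition on $Z_{22}$: the hypotheses of Lemma \ref{lemma:detApproxBlockStieltjes} hold conditionally with constants that do not depend on $Z_{22}$, and the polarization argument above was entirely deterministic in $Z_{22}$. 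Taking expectations in $Z_{22}$ at the end preserves the bound. The only potential subtlety is ensuring that the hypothesized bounds on $A_{12}$ and $B_{12}$ in Lemma \ref{lemma:detApproxBlockStieltjes} hold uniformly in the realization of $Z_{22}$, which is automatic because Assumption B1 is required to hold for every real symmetric $\Gamma$.
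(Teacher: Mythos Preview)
Your proof is correct and follows the same approach as the paper: apply Lemma \ref{lemma:detApproxBlockStieltjes} to $T_n(A)$ and $T_n(B)$, observe that the deterministic proxy $\trace{{\cal W}_n^{11}(z)}+{\cal L}(z)$ is identical for both, and conclude by the triangle inequality, treating random $Z_{22}$ by conditioning. Your argument is in fact more detailed than the paper's, which simply asserts that the matching-second-moment hypothesis makes the proxies coincide; your decomposition of $(Z_{22}-z\id)^{-k}$ into real symmetric parts and the spectral/polarization reduction to the rank-one case $M=uu\trsp$ spell out exactly what the paper leaves implicit.
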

\begin{proof}
The proof is essentially immediate once we realize that the assumption 
$$
\Exp{(A_{12}u)(A_{12}u)\trsp}=\Exp{(B_{12}u)(B_{12}u)\trsp}\;
$$
implies that, in the notation of the previous Lemma, the deterministic approximating quantity
$$
\trace{{\cal W}_n^{11}(z)}+{\cal L}(z)
$$
takes the same value since the expectations involving $A_{12}$ or $B_{12}$ are the same. 

The case of random $Z_{22}$ is treated by conditioning on $Z_{22}$ and getting an upper bound on the quantities we care about that does not depend on $Z_{22}$. 
\end{proof}

\section{Distributional results for various models}

\subsection{More details on the matrices drawn from $Sl(d,\RR)$}\label{subsec:CompsSLdAppendix}
We give more details about the stochastic properties of the matrices $B$ drawn according to the scheme described in Corollary \ref{coro:SLdCase}. 
\subsubsection{Computing the joint density of the singular values}
We consider the problem of understanding the singular values of the matrix 
$$
B=\frac{G}{|\det(G)|^{1/d}}\;,
$$
where $G$ is $d\times d$ with i.i.d Gaussian entries. 
We write an svd of $G$ as $G=UDV\trsp$. By rotational invariance arguments, it is clear that $(U,V)$ and $D$ are independent. Furthermore, $U$ and $V$ are Haar-distributed on $O(d)$. Note that defining $U$ and $V$ as svd-representatives may induce some mild dependence between them, because of sign issues. It is possible to deal with this dependence issue but we do not discuss it further as our interest here is in singular values.

Let us call $s_i$ the singular values of $B$. Recall that $s_i\geq 0$. For simplicity, we seek to understand not the joint distribution of $s_i$'s but that of $s_i^2$. Note that if $d_i$'s are the singular values of $G$, we have the relationship
$$
s_i=\frac{d_i}{(\prod_{1\leq i \leq d}d_i)^{1/d}}\;,
$$
since $|\det(G)|=\sqrt{\det(G\trsp G)}=\sqrt{\prod_{1\leq i \leq d} d_i^2}$.
In particular, we have, if we denote by $l_i$'s are the eigenvalues of $G\trsp G$, 
$$
s_i^2=\frac{l_i}{(\prod_{1\leq i \leq d}l_i)^{1/d}}\;.
$$

We have the following fact:
\begin{fact}
The joint density of $l_1>l_2>\ldots>l_d$ is
\begin{equation}\label{eq:JointDensityWishart}
f(l_1,\ldots,l_d)=C(d)\exp\left(-\frac{1}{2}\sum_{i=1}^d l_i\right) \prod_{i=1}^d l_i^{-1/2}\prod_{i<j}(l_i-l_j)\;.
\end{equation}
\end{fact}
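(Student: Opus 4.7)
The proposal is to recognize this as the classical Wishart eigenvalue density. Since $G$ is $d\times d$ with i.i.d.\ $\mathcal{N}(0,1)$ entries, the matrix $W := G^{\trsp} G$ is Wishart-distributed $W_d(d, \id_d)$ with $n=d$ degrees of freedom. The joint density of the ordered eigenvalues of $W_d(n,\id_d)$ is the standard formula
\[
f(l_1,\ldots,l_d) \;=\; c_{n,d}\,\exp\!\left(-\tfrac12\sum_i l_i\right)\prod_i l_i^{(n-d-1)/2}\prod_{i<j}(l_i-l_j),
\]
so specializing $n=d$ gives the exponent $(n-d-1)/2 = -1/2$, exactly the claimed formula. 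The plan is therefore just to invoke this standard result (citing e.g.\ Muirhead or Anderson, which are already referenced in the paper) and identify the degrees-of-freedom parameter.

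If one wishes to derive it from scratch rather than quote it, the outline is as follows. Start from the joint density of $G$, namely $(2\pi)^{-d^2/2}\exp(-\tfrac12 \trace{G^{\trsp} G})$. Pass to the variable $W=G^{\trsp} G$: using the QR decomposition $G = QT$ with $Q$ Haar on $O(d)$ and $T$ upper-triangular with positive diagonal, the density of $W = T^{\trsp} T$ on the cone of positive-definite matrices is proportional to $\det(W)^{-1/2}\exp(-\tfrac12 \trace{W})$ (the $-1/2$ exponent is exactly the boundary case $n=d$ of the general Wishart density $\det(W)^{(n-d-1)/2}$, coming from the Jacobian of $T\mapsto T^{\trsp} T$). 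Then perform the spectral change of variables $W = O\,\diag(l_1,\ldots,l_d)\,O^{\trsp}$ with $O \in O(d)/\{\pm 1\}^d$; the Jacobian produces the Vandermonde factor $\prod_{i<j}(l_i - l_j)$, and integrating out $O$ against Haar measure on $O(d)$ yields a constant $C(d)$ absorbing the volume of the orthogonal group. Collecting the pieces reproduces \eqref{eq:JointDensityWishart}.

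The only genuine point of care is the Jacobian computation for the spectral change of variables, which is where the Vandermonde appears; this is a standard calculation (see Muirhead, Theorem 3.2.17) and is not really an obstacle so much as a bookkeeping step. The constant $C(d)$ is explicit in terms of the multivariate gamma function, but since the statement only asserts the proportionality shape with an unspecified $C(d)$, we do not need to track it.
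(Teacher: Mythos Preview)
Your proposal is correct and matches the paper's approach exactly: the paper simply observes that $G\trsp G$ is $\Wishart(d,\id_d)$ and cites Corollary 3.2.19 in Muirhead for the eigenvalue density, which is precisely your identification of the $n=d$ specialization of the standard formula. Your additional from-scratch outline is a nice bonus but goes beyond what the paper does.
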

\begin{proof}
We note that $G\trsp G$ is Wishart-distributed, specifically $\Wishart(d,\id_d)$. The fact we mention is therefore just the content of Corollary 3.2.19 in \cite{muirhead82}. The value of $C(d)$ is known explicitly:
$$
C(d)=\frac{\pi^{d^2/2}}{2^{d^2/2}\left[\Gamma_d(d/2)\right]^2}\;, \quad\text{where}\quad\Gamma_d(x)=\pi^{d(d-1)/4}\prod_{i=1}^d \Gamma[x-(i-1)/2]\;,
$$
provided $\myreal{x}>(m-1)/2$ and $\Gamma$ is the ordinary Gamma function. For details, see \cite{muirhead82}, pp.61-62.
\end{proof}

We now assume that $s_1>s_2>\ldots>s_d$. We call 
\begin{align*}
y_i&=s_i^2\;, 1\leq i \leq d \;,\\
t&=\left(\prod_{1\leq i \leq d} l_i\right)^{1/d} \;.
\end{align*}

We would like to find $g(y_1,\ldots,y_{d-1})$, the density of the $d-1$ largest eigenvalues of $B\trsp B$. Note that $\det(B\trsp B)=1$, so $\prod_{1\leq i \leq d}y_i=1$. 
Note also that if we keep the ordering, we must have $\left(\prod_{1\leq i \leq d-1} y_i\right) y_{d-1}>1$ to guarantee that there exists $y_d< y_{d-1}$ such that $\prod_{1\leq i \leq d} y_i=1$. This defines the subset of $\mathbb{R}^{d-1}$ where $(y_i)_{i=1}^{d-1}$ lives.

\begin{lemma}\label{lemma:jointDensityOrderedSldEigs}
Let us call $\tilde{y}$ the vector $(y_1,\ldots,y_{d-1})$, where $y_1>y_2>\ldots>y_{d-1}>0$ and $\prod_{1\leq i \leq d-1}y_i> 1/y_{d-1}$\;. We call ${\cal R}$ this subset of $\mathbb{R}^{d-1}$. 
Let $\alpha=1/\prod_{1\leq i \leq d-1}y_i$ and 
\begin{align*}
\gamma(\tilde{y})&=\frac{1}{2}\left(\sum_{1\leq i \leq d-1} y_i+\alpha\right)\;,\\
R(\tilde{y})&=\alpha  \prod_{1\leq i<j\leq d-1}(y_i-y_j) \prod_{1\leq i \leq d-1} (y_i-\alpha)\;.
\end{align*}	
Then, the density of $\tilde{y}$ over ${\cal R}$ is 
$$
g(y_1,\ldots,y_{d-1})=\tilde{C}(d) \frac{R(\tilde{y})}{[\gamma(\tilde{y})]^{d^2/2}}\;.
$$
\end{lemma}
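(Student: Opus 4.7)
\textbf{Proof plan for Lemma \ref{lemma:jointDensityOrderedSldEigs}.} The strategy is a direct change of variables starting from the Wishart density \eqref{eq:JointDensityWishart}. Let $t=(\prod_{i=1}^d l_i)^{1/d}$, so that by the definitions preceding the lemma one has $y_i = l_i/t$ for $i=1,\ldots,d-1$, and $y_d = l_d/t = \alpha = 1/\prod_{i=1}^{d-1} y_i$, as forced by $\prod_{i=1}^d y_i = 1$. The plan is to pass from $(l_1,\ldots,l_d)$ to $(y_1,\ldots,y_{d-1},t)$, substitute into \eqref{eq:JointDensityWishart}, and then integrate out the nuisance variable $t\in(0,\infty)$ to recover the marginal density $g$.

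First I would compute the Jacobian of the inverse map $(y_1,\ldots,y_{d-1},t)\mapsto (l_1,\ldots,l_d)$, whose matrix has $t\id_{d-1}$ in its upper-left $(d-1)\times(d-1)$ block, last column $(y_1,\ldots,y_{d-1},\alpha)\trsp$, and bottom row $(-t\alpha/y_1,\ldots,-t\alpha/y_{d-1},\alpha)$. Adding $(\alpha/y_j)$ times the $j$-th row to the last row zeros out the off-diagonal entries in that row, replaces the bottom-right entry by $\alpha + (d-1)\alpha = d\alpha$, and shows immediately that the absolute Jacobian is $|J|=d\alpha\, t^{d-1}$. This is the only nontrivial linear-algebra step and should be straightforward.

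Next I would substitute into \eqref{eq:JointDensityWishart}. Using $\prod_{i=1}^d l_i = t^d$ gives $\prod_i l_i^{-1/2}=t^{-d/2}$; using $\sum l_i = t(\sum_{i=1}^{d-1}y_i+\alpha)=2t\gamma(\tilde y)$ handles the exponential; and writing $l_i-l_j = t(y_i-y_j)$ yields
\[
\prod_{i<j}(l_i-l_j)=t^{d(d-1)/2}\prod_{1\leq i<j\leq d-1}(y_i-y_j)\prod_{i=1}^{d-1}(y_i-\alpha).
\]
Multiplying by $|J|$ gathers all powers of $t$ into the exponent $-d/2 + d(d-1)/2+(d-1) = (d^2-2)/2$, and the $y$-dependent factors combine (together with the factor $\alpha$ coming from $|J|$) into exactly $R(\tilde y)$. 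Thus the joint density of $(\tilde y,t)$ on $\mathcal R\times(0,\infty)$ takes the product form $C(d)\,d\,R(\tilde y)\,t^{(d^2-2)/2}\,e^{-t\gamma(\tilde y)}$.

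Finally, integrating out $t$ via $\int_0^\infty t^{(d^2-2)/2} e^{-t\gamma}\,dt = \Gamma(d^2/2)/\gamma^{d^2/2}$ yields the claimed formula, with $\tilde C(d) = d\,C(d)\,\Gamma(d^2/2)$. The main obstacle, modest but easy to slip on, is bookkeeping: correctly tallying the power of $t$ from the four sources (Vandermonde, $\prod l_i^{-1/2}$, Jacobian, and exponent $e^{-t\gamma}$) and verifying that $\prod_i(y_i-\alpha)$ has the right sign on $\mathcal R$ (it does, since $y_{d-1}>\alpha$ by the defining inequality $y_{d-1}\prod_{i=1}^{d-1} y_i>1$), so no absolute values are lost in the Vandermonde factor.
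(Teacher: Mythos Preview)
Your proposal is correct and follows essentially the same route as the paper: the same change of variables $(l_1,\ldots,l_d)\mapsto(y_1,\ldots,y_{d-1},t)$, the same substitution into the Wishart density, and the same Gamma integral to marginalize out $t$. The only difference is cosmetic: you compute the Jacobian $d\alpha t^{d-1}$ by elementary row operations, whereas the paper factors out $\alpha t^{d-1}$ and then evaluates the remaining determinant via the rank-two perturbation formula $\det(\id+\sum_i \phi_i\otimes f_i)=\det(\delta_{ij}+\langle\phi_i,f_j\rangle)$; both arrive at the same value.
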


\begin{proof}[\textbf{Proof of Lemma \ref{lemma:jointDensityOrderedSldEigs}:}]
To find the density, we will use the following change of variables from $(l_1,\ldots,l_d)$ to $(y_1,\ldots,y_{d-1},t)$:
\begin{align*}
l_i&= t y_i\;, 1\leq i \leq d-1\;,\\
l_d&=\frac{t}{\prod_{1\leq i \leq d-1} y_i}\;.
\end{align*}
We call $\alpha=\frac{1}{\prod_{1\leq i \leq d-1} y_i}$.
Let us call $\tilde{y}$ the $d-1\times 1$ vector with $i$-th entry $y_i$. $1/\tilde{y}$ is the $(d-1)\times 1$ vector with $i$-th entry $1/y_i$. 
The Jacobian matrix for the change of variables we just discussed is 
$$
M=\begin{pmatrix}
t\Id_{d-1}& \tilde{y}\\
-t\alpha/\tilde{y} & \alpha
\end{pmatrix}
$$
By multilinearity of the determinant, we therefore have 
$$
\det(M)=\alpha \det\begin{pmatrix}t\Id_{d-1}& \tilde{y}\\
-t/\tilde{y} & 1
 \end{pmatrix}=\alpha t^{d-1} \det\begin{pmatrix}\Id_{d-1}& \tilde{y}\\
-1/\tilde{y} & 1
 \end{pmatrix}\;.
$$
Now, let us call $\mathsf{y}$ the $d\times 1$ vector such that 
$$
\mathsf{y}=\begin{pmatrix} \tilde{y}\\ 0 \end{pmatrix}\;.
$$
And let $\mathsf{1/y}$ be the vector such that 
$$
\mathsf{1/y}=\begin{pmatrix} 1/\tilde{y}\\ 0 \end{pmatrix}\;.
$$
We have, if $e_d$ denotes the $d$-th canonical basis vector, 
$$
\begin{pmatrix}\Id_{d-1}& \tilde{y}\\
-1/\tilde{y} & 1\end{pmatrix}=\id_d + \mathsf{y}e_d\trsp -e_d \mathsf{1/y}\trsp\;.
$$
From determinant theory (\cite{ggk}, Theorem I.3.2, p.9), we know that 
$$
\det(\id+\sum_{1\leq i \leq m} \phi_i \otimes f_i)=\det(\delta_{i,j}+\langle \phi_i,f_j\rangle)_{1\leq i,j\leq m}\;.
$$
In the circumstances of interest to us, we have 
$$
\left(\langle\phi_i,f_j\rangle\right)_{1\leq i,j\leq 2}=\begin{pmatrix}0&-(d-1)\\1&0\end{pmatrix}\;.
$$
So we conclude that 
$$
\det\begin{pmatrix}\Id_{d-1}& \tilde{y}\\
-1/\tilde{y} & 1
 \end{pmatrix}=1+(d-1)=d\;.
$$
Hence, the Jacobian of our change of variable is 
$$
J=\alpha t^{d-1} d.
$$
We conclude that the density of $(y_1,\ldots,y_{d-1},t)$ is 
$$
h(y_1,\ldots,y_{d-1},t)=\frac{1}{\prod_{1\leq i \leq d-1} y_i} t^{d-1} f\left(ty_1,\ldots,ty_{d-1},\frac{t}{\prod_{1\leq i \leq d-1} y_i}\right)\;.
$$
Now, 
\begin{align*}
f(ty_1,\ldots,ty_{d-1},t\alpha)=&\,C(d)\exp\left(-\frac{t}{2}\left[\sum_{1\leq i \leq d-1} y_i+\alpha\right]\right) t^{-d/2} \\
&\times\left[\prod_{1\leq i \leq d-1} y_i^{-1/2} \right]\alpha^{-1/2} t^{d(d-1)/2}
\prod_{1\leq i<j\leq d-1}(y_i-y_j) \prod_{1\leq i \leq d-1} (y_i-\alpha)\;.
\end{align*}
Therefore,
\begin{align*}
h(y_1,\ldots,y_{d-1},t)=&\,C(d)\alpha t^{d-1-d/2+d(d-1)/2}\exp\left(-\frac{t}{2}\left[\sum_{1\leq i \leq d-1} y_i+\alpha\right]\right)\\
&\times \prod_{1\leq i<j\leq d-1}(y_i-y_j) \prod_{1\leq i \leq d-1} (y_i-\alpha)\\
=&\,C(d)t^{d^2/2-1} \exp(-t\gamma(\tilde{y})) R(\tilde{y})\;,
\end{align*}
where 
\begin{align*}
\gamma(\tilde{y})&=\frac{1}{2}\left(\sum_{1\leq i \leq d-1} y_i+\alpha\right)\;,\\
R(\tilde{y})&=\alpha  \prod_{1\leq i<j\leq d-1}(y_i-y_j) \prod_{1\leq i \leq d-1} (y_i-\alpha)\;.
\end{align*}

The joint density of $(y_1,\ldots,y_{d-1})$ is simply:
$$
g(y_1,\ldots,y_{d-1})=\int_0^\infty h(y_1,\ldots,y_{d-1},t) dt\;.
$$
Note that $\sum_{1\leq i \leq d-1} y_i+\alpha>0$ in the domain we consider, so there are no integrability problems.  
Also, if $K$ is an integer,
$$
\int_0^{\infty} t^{K-1} \exp(-\beta t) dt=\Gamma(K)\beta^{-K}=(K-1)! \beta^{-K}\;.
$$
Therefore, we finally have, for $y_1>y_2>\ldots>y_{d-1}$, 
$$
g(y_1,\ldots,y_{d-1})=\tilde{C}(d) \frac{R(\tilde{y})}{[\gamma(\tilde{y})]^{d^2/2}}\;.
$$
The Lemma is shown. 
\end{proof}

Let us apply the Lemma in the case $d=2$. 

\begin{corollary}[Case $d=2$]\label{coro:SL2svsAreLikeCauchy}
Then, $\gamma(\tilde{y})=\frac{1}{2}(y_1+1/y_1)$ and $R(\tilde{y})=(1-1/y_1^2)$\;. So, for $y_1>1$,  
$$
g(y_1)=C\frac{1-y_1^{-2}}{(y_1+1/y_1)^2}\sim_{\infty} C y_1^{-2}\;.
$$
Therefore, $y_1=s_1^2$ has a $1-\eps$ moment for any $\eps>0$, but not 1 moment. In other words, 
$$
\Exp{s_1^{2-\eps}}<\infty \text{ if } \eps>0\;,
$$
and $\Exp{s_1^2}=\infty$. 
\end{corollary}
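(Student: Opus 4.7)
The proof is essentially a direct specialization of Lemma \ref{lemma:jointDensityOrderedSldEigs} to $d=2$, followed by a straightforward tail analysis. I would organize it as follows.

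First, I would unpack the general formula with $d=2$: the vector $\tilde{y}$ reduces to the single coordinate $y_1$, and $\alpha = 1/y_1$. The domain ${\cal R}$ collapses to the condition $y_1 > 1/y_1$, i.e.\ $y_1 > 1$, which is what we expect since $s_1$ is the \emph{largest} singular value and $\prod s_i^2 = \det(B\trsp B) = 1$. Substituting into the definitions of $\gamma$ and $R$ gives immediately
\[
\gamma(\tilde{y}) = \tfrac{1}{2}\bigl(y_1 + 1/y_1\bigr), \qquad R(\tilde{y}) = \tfrac{1}{y_1}\bigl(y_1 - \tfrac{1}{y_1}\bigr) = 1 - y_1^{-2},
\]
where the product $\prod_{i<j\leq d-1}$ is empty. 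With $d^2/2 = 2$, Lemma \ref{lemma:jointDensityOrderedSldEigs} then yields
\[
g(y_1) = \tilde{C}(2)\,\frac{1 - y_1^{-2}}{(y_1 + 1/y_1)^2}\quad\text{on } (1,\infty),
\]
as claimed.

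Next, for the asymptotic I would note that as $y_1 \to \infty$, the numerator $1 - y_1^{-2} \to 1$ and $(y_1 + 1/y_1)^2 = y_1^2 + 2 + y_1^{-2} \sim y_1^2$, so $g(y_1) \sim C\, y_1^{-2}$.

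Finally, for the moment statement I would compute $\EE[s_1^{q}] = \EE[y_1^{q/2}] = \int_1^\infty y_1^{q/2} g(y_1)\,dy_1$. The tail asymptotic shows the integrand behaves like $y_1^{q/2-2}$ near infinity, which is integrable iff $q/2 - 2 < -1$, i.e.\ $q < 2$. Since $g$ is continuous and bounded on a neighborhood of $1$, there is no integrability issue at the lower endpoint. Hence $\EE[s_1^{2-\eps}] < \infty$ for every $\eps > 0$ while $\EE[s_1^2] = \infty$.

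There is no real obstacle here: everything is mechanical once the lemma is specialized. The only minor point worth double-checking is that the domain reduction gives exactly $y_1 > 1$ and that the constant $\tilde{C}(2)$ is positive and finite (which follows from the derivation of $\tilde{C}(d)$ in the proof of the lemma, via the Gamma integral $\int_0^\infty t^{d^2/2 - 1} e^{-\beta t}\,dt$).
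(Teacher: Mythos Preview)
Your proposal is correct and follows exactly the route the paper intends: the paper does not give a separate proof for this corollary, simply prefacing it with ``Let us apply the Lemma in the case $d=2$,'' and your write-up carries out precisely that specialization together with the obvious tail/moment analysis. The only cosmetic point is that the constant in front should strictly be $4\tilde{C}(2)$ rather than $\tilde{C}(2)$ (from the factor $1/2$ in $\gamma$ squared), but the paper itself absorbs this into a generic $C$, so this is immaterial.
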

This corollary shows that the square of the largest singular value of $G/|\det(G)|^{1/d}$ has Cauchy-like behavior in the tail.

\subsubsection{On the entries of $B\trsp B$}
We recall the famous Bartlett decomposition of a Wishart matrix (see \cite{muirhead82}, p.99).
\begin{theorem}[Bartlett Decomposition]\label{thm:BartlettDecomp}
Let $A$ be $\Wishart_p(n,\id_p)$, with $n\geq p$ and write $A=T\trsp T$, where $T$ is an upper-triangular $p\times p$ matrix with positive diagonal elements. Then the elements of $T$ are all independent, $T_{i,i}^2$ is $\chi^2_{n-i+1}$, for $1\leq i \leq p$, and $T_{i,j}$ is ${\cal N}(0,1)$ for $1\leq i<j \leq p$.
\end{theorem}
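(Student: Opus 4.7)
The plan is to realize $A$ as a Gram matrix $X\trsp X$ where $X \in \RR^{n\times p}$ has i.i.d.\ ${\cal N}(0,1)$ entries, construct $T$ explicitly by Gram--Schmidt on the columns of $X$, and then extract the distributional claims from rotational invariance of the standard Gaussian on $\RR^n$.

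\textbf{Step 1 (Representation of $T$).} Since $n \geq p$, the columns $x_1,\ldots,x_p$ of $X$ are almost surely linearly independent. The Gram--Schmidt procedure then produces an orthonormal system $q_1,\ldots,q_p \in \RR^n$ and a uniquely determined upper-triangular matrix $T$ with positive diagonal such that $X = QT$, where $Q=(q_1\,\cdots\,q_p)$. Then $A = T\trsp Q\trsp Q T = T\trsp T$, and uniqueness of the Cholesky factor with positive diagonal ensures this $T$ is precisely the one in the statement. Concretely, $T_{i,j}=\langle q_i,x_j\rangle$ for $i<j$ and $T_{j,j}=\|x_j-\sum_{i<j}T_{i,j}q_i\|$.

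\textbf{Step 2 (Conditional distribution of column $j$).} Fix $j$. The vectors $q_1,\ldots,q_{j-1}$ are measurable with respect to $\sigma(x_1,\ldots,x_{j-1})$. Because $x_j$ is independent of $(x_1,\ldots,x_{j-1})$ and distributed as ${\cal N}(0,\id_n)$, its conditional distribution coincides with its unconditional one. Applying rotational invariance of ${\cal N}(0,\id_n)$ with an orthogonal change of basis that sends $q_1,\ldots,q_{j-1}$ to the first $j-1$ canonical basis vectors, we see that, conditionally, the inner products $T_{1,j},\ldots,T_{j-1,j}$ are i.i.d.\ ${\cal N}(0,1)$, and the squared norm $T_{j,j}^2$ of the projection of $x_j$ onto the orthogonal $(n-j+1)$-dimensional complement is $\chi^2_{n-j+1}$, independent of the previous ones.

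\textbf{Step 3 (Independence across columns).} The conditional joint law of the $j$-th column of $T$ obtained in Step 2 does not depend on the conditioning variables $(x_1,\ldots,x_{j-1})$. Hence $\{T_{i,j}\}_{i \leq j}$ is independent of $\sigma(x_1,\ldots,x_{j-1})$, which contains $\sigma(T_{i,k}:i\leq k<j)$ by Step 1. An induction on $j=1,\ldots,p$ then yields joint independence of all entries of $T$, with the claimed marginal distributions.

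The substantive content is concentrated in Step 2; the only subtle point is the bookkeeping of the conditioning in Step 3, which is routine once one observes that the conditional law does not depend on the conditioning variables. Everything else is a direct consequence of the invariance of the standard Gaussian measure on $\RR^n$ under the orthogonal group.
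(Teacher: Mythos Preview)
Your proof is correct and follows the classical Gram--Schmidt / rotational-invariance argument. Note, however, that the paper does not actually prove this theorem: it is simply \emph{recalled} from \cite{muirhead82}, p.~99, as a known result and used as a tool in the subsequent computations on $Sl(d,\RR)$. So there is nothing to compare your argument against in the paper itself; you have supplied a self-contained proof where the authors were content with a citation.
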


From now on, we call $T$ the upper-triangular matrix appearing in the Bartlett decomposition of $G\trsp G$. 
We have the following lemma. 

\begin{lemma}\label{lemma:StochRepEntriesStS}
We have 
$$
B\trsp B=\tilde{T}\trsp \tilde{T}\;,
$$
where 
$$
\tilde{T}=\frac{T}{\det(T)^{1/d}}=\frac{T}{\prod_{i=1}^d T_{i,i}^{1/d}}\;.
$$
$T_{i,i}^2$ are independent and have distribution $\chi^2_{d-i+1}$, for $1\leq i \leq d$. 
Calling $T_i$ the $i$-th column of $T$, we therefore have 
$$
(B\trsp B)_{i,j}=\tilde{T}_i\trsp \tilde{T}_j\;.
$$
\end{lemma}

If $i\leq j$, we have in particular 
$$
(B\trsp B)_{i,j}=\frac{\sum_{k\leq i}T_{k,i}T_{k,j}}{\prod_{l=1}^d T_{l,l}^{2/d}}\;.
$$
More specifically, 
\begin{enumerate}
\item when $i<j$,  
$$
(B\trsp B)_{i,j}=\sum_{k<i}\frac{T_{k,i}T_{k,j}}{\prod_{l=1}^d T_{l,l}^{2/d}}+\frac{T_{i,i}^{1-2/d}T_{i,j}}{\prod_{l\neq i} T_{l,l}^{2/d}}\;.
$$
(Because of independence properties of the $T_{i,j}$'s, computations of moments for $(B\trsp B)_{i,j}$ is relatively simple.) 
\item when $i=j$, 
$$
(B\trsp B)_{i,i}=\sum_{k<i}\frac{T_{k,i}^2}{\prod_{l=1}^d T_{l,l}^{2/d}}+\frac{T_{i,i}^{2-2/d}}{\prod_{l\neq i} T_{l,l}^{2/d}}\;.
$$
\end{enumerate}

In particular, when $d=2$, $(B\trsp B)_{1,1}=\frac{T_{1,1}}{T_{2,2}}=\frac{\chi_2}{\chi_1}$, where the two $\chi$ random variables are independent. Since a Cauchy random variable is the ratio of two independent $\chi_1$ random variables we conclude that $(B\trsp B)_{1,1}$ is stochastically larger than a Cauchy random variable.

\subsection{Properties of the null distribution for the class averaging algorithm}\label{app:subsec:vdmComps}

We now consider the distribution of 
\begin{equation}\label{invariance:distance:group:action}
g_{ij}=\argmin_{g \in SO(2)}\|Z_i-g\circ Z_j\|_2^2\;,
\end{equation}
where $Z_i$ are functions defined on $\RR^2$ and $g\circ Z_i$ is defined as a new function on $\RR^2$ as $(g\circ Z)((x,y)^T):=Z(g(x,y)^T))$, $(x,y)^T\in \RR^2$. 
Intuitively, the dependence among different $g_{ij}$ entries is obvious. However, how the dependence among each entry is not that clear without a careful analysis. In particular, the uniform distribution of $g_{ij}$ needs to be carefully addressed.

\begin{lemma}[Null Case for the Class Averaging Algorithm]\label{lemma:distgijNullCase}
Suppose that $Z_i$ and $Z_j$ are independent. Suppose that each random variable has a distribution that is invariant under the action of $SO(2)$. 
Then $g_{ij}$ and $Z_i$ are independent and so are $g_{ij}$ and $Z_j$. Furthermore, $g_{ij}$ is uniformly distributed on $SO(2)$. 
\end{lemma}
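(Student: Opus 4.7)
The plan is to exploit two equivariance identities for the $\argmin$ map in \eqref{invariance:distance:group:action}, then combine them with the $SO(2)$-invariance of the marginal laws of $Z_i$ and $Z_j$.

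First, I would establish the two equivariances
\begin{align*}
g_{ij}(Z_i, h \circ Z_j) &= g_{ij}(Z_i, Z_j)\, h^{-1}, \\
g_{ij}(h \circ Z_i, Z_j) &= h \cdot g_{ij}(Z_i, Z_j),
\end{align*}
for every $h \in SO(2)$. The first follows from the associativity $(gh) \circ Z_j = g \circ (h \circ Z_j)$ and the change of variable $g' = gh$ inside the optimization. The second uses the fact that the $SO(2)$-action is isometric for the $L^2$ norm (rotations of $\RR^2$ preserve Lebesgue measure), so that $\|h\circ Z_i - g\circ Z_j\|_2 = \|Z_i - (h^{-1}g)\circ Z_j\|_2$; relabeling $g' = h^{-1}g$ gives the claim.

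Next, I would condition on $Z_j = z$. Since $Z_i \perp Z_j$ and the law of $Z_i$ is $SO(2)$-invariant, the conditional law of $h\circ Z_i$ given $Z_j = z$ equals that of $Z_i$ for every $h \in SO(2)$. Combining with the second equivariance,
\[
g_{ij}(Z_i, z) \;\equalInLaw\; g_{ij}(h \circ Z_i, z) \;=\; h \cdot g_{ij}(Z_i, z),
\]
so the conditional law of $g_{ij}$ given $Z_j = z$ is left-invariant on $SO(2)$. Since the Haar (uniform) measure is the unique left-invariant probability measure on this compact group, this conditional law equals Haar measure for every $z$. In particular, the conditional law of $g_{ij}$ given $Z_j$ does not depend on $Z_j$, which simultaneously yields that $g_{ij}$ is Haar-distributed on $SO(2)$ and that $g_{ij}$ is independent of $Z_j$. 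Swapping the roles of $Z_i$ and $Z_j$ and invoking the first equivariance instead (which produces right-invariance, again characterizing Haar), the same conditioning argument shows $g_{ij}$ is independent of $Z_i$.

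The main technical obstacle I anticipate is the well-definedness of $g_{ij}$ when the minimizer on $SO(2)$ is not unique. Under the standard assumption that $Z_i, Z_j$ have continuous distributions, the minimizer is unique almost surely, so both equivariances hold almost surely and the conditioning argument is unaffected; if needed, a measurable selection of the $\argmin$ together with a tiebreaking rule that is itself $SO(2)$-equivariant handles the null set. Modulo this measurability subtlety, the proof is simply the interplay between the equivariances above and the $SO(2)$-invariance of the marginal distributions.
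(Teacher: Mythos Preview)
Your proof is correct and follows essentially the same approach as the paper: establish an equivariance of the $\argmin$ under the $SO(2)$-action, condition on one of the two variables, use the rotational invariance of the other to deduce that the conditional law of $g_{ij}$ is translation-invariant on $SO(2)$, hence Haar, and conclude independence because the conditional law does not depend on the conditioning variable. The only cosmetic difference is that the paper conditions on $Z_i$ and acts on $Z_j$ (using only the first of your two equivariances, which does not require the action to be isometric), whereas you condition on $Z_j$ and act on $Z_i$; since $SO(2)$ is abelian, left- and right-invariance coincide and the two routes are interchangeable.
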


\begin{proof}
Note that conditional on $Z_i$, if $Z_j\rightarrow O^{-1}\circ Z_j$, where $O\in SO(2)$, then $g_{ij}\rightarrow Og_{ij}$ by (\ref{invariance:distance:group:action}). 
Hence, using the assumption that $Z_j\equalInLaw O\circ Z_j$ (i.e the law of $Z_j$ is invariant under the action of $SO(2)$) and $Z_j|Z_i\equalInLaw O\circ Z_j|Z_i$ (this latter equality coming from independence of $Z_i$ and $Z_j$), we see that 
$$
g_{ij}|Z_i\equalInLaw Og_{ij}|Z_i\;.
$$
Since the only distribution on $SO(2)$ that is invariant by left-multiplication by an $SO(2)$ is the uniform distribution on $SO(2)$, we conclude that $g_{ij}|Z_i$ has the uniform distribution on $SO(2)$.

Because the uniform distribution on $SO(2)$ does not depend on $Z_i$, $g_{ij}$ and $Z_i$ are independent. Indeed, let $\Gamma$ be a function of $Z_i$ and $\omega$ be a function of $g_{ij}$. Note that since the distribution of $g_{ij}|Z_i$ does not depend on $Z_i$, we have 
$$
\EE(\omega(g_{ij})|Z_i)\triangleq \Omega=\EE{\omega(g_{ij})}\;.
$$
In other words, $\Omega$ is a constant (in particular, it does not depend on $Z_i$).  Therefore, we have
\begin{align*}
\EE\left[\omega(g_{ij})\Gamma(Z_i)\right]&=\EE\left[\EE(\omega(g_{ij})\Gamma(Z_i)|Z_i)\right]=\EE\left[\Gamma(Z_i)\EE(\omega(g_{ij})|Z_i)\right]\\
&=\EE\left[\Gamma(Z_i)\right]\Omega=\EE\left[\Gamma(Z_i)\right]\EE\left[\omega(g_{ij})\right]\;.
\end{align*}
The same argument shows that $g_{ij}$ is also independent of $Z_j$. So we have established that $g_{ij}$ and $Z_i$ are independent. The same argument shows that $g_{ij}$ and $Z_j$ are independent. However the three random variables $g_{ij}$, $Z_i$ and $Z_j$ are not \emph{jointly} independent.
\end{proof}

The previous lemma has the following useful consequence.

\begin{lemma}\label{lemma:independencegijandgik}
Suppose that $Z_i$, $Z_j$ and $Z_k$ are independent, each random variable having a distribution that is invariant under the action of $SO(2)$. 
Then $g_{ij}$ and $g_{ik}$ are independent, and so are $g_{ij}$ and $g_{jk}$. 
Furthermore, the random variables $\{g_{ij}\}_{j=1}^n$ are jointly independent.
\end{lemma}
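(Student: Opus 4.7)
The strategy is a straightforward conditioning argument that leverages the fact that $g_{ij}$ is a deterministic function of the pair $(Z_i,Z_j)$, together with the conditional Haar distribution established in the proof of Lemma~\ref{lemma:distgijNullCase}. Under each of the conditionings below, the remaining $Z_\ell$'s retain their mutual independence and the $SO(2)$-invariance of their laws, so the reasoning from that lemma transfers without change.

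To prove independence of $g_{ij}$ and $g_{ik}$, I would condition on $Z_i$. Since $Z_j$ and $Z_k$ are independent of each other and of $Z_i$, conditional on $Z_i$ the pair $(Z_j,Z_k)$ is independent, and hence $g_{ij}$ and $g_{ik}$ -- being functions of disjoint variables once $Z_i$ is fixed -- are conditionally independent given $Z_i$. The argument in the proof of Lemma~\ref{lemma:distgijNullCase} further shows that $g_{ij}\mid Z_i$ is Haar on $SO(2)$, and likewise for $g_{ik}\mid Z_i$. Hence the conditional joint law of $(g_{ij},g_{ik})$ is the product Haar measure on $SO(2)\times SO(2)$, which does not depend on $Z_i$; integrating out $Z_i$ yields the claimed unconditional independence. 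Independence of $g_{ij}$ and $g_{jk}$ is obtained by the same argument after conditioning on $Z_j$: rotating $Z_i\to O\circ Z_i$ while keeping $Z_j$ fixed transforms $g_{ij}\to g_{ij}O$, and the $SO(2)$-invariance of the law of $Z_i$ then forces $g_{ij}\mid Z_j$ to be Haar (now via right-multiplication invariance), and the same is true for $g_{jk}\mid Z_j$.

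For joint independence of $\{g_{ij}\}_{j=1}^{n}$ with $i$ fixed, I would condition once more on $Z_i$: the $\{Z_j\}_{j\neq i}$ are mutually independent, so $\{g_{ij}\}_{j\neq i}$ are conditionally mutually independent given $Z_i$, with each having a Haar marginal. The product Haar conditional law again does not depend on $Z_i$, so after integration the $g_{ij}$'s are unconditionally i.i.d.\ Haar, and in particular jointly independent. The only mildly delicate point is to check that the conditional Haar distribution really holds uniformly in the conditioning variable, but this is precisely what the invariance argument of Lemma~\ref{lemma:distgijNullCase} already gives, so no new computation is required and I anticipate no serious obstacle.
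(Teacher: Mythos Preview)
Your proposal is correct and follows essentially the same route as the paper's proof: condition on $Z_i$ (respectively $Z_j$), use that the remaining $Z$'s are independent so the relevant $g$'s are conditionally independent, invoke the conditional Haar distribution from Lemma~\ref{lemma:distgijNullCase} to see that the conditional joint law is a fixed product measure, and integrate out. Your explicit remark that rotating $Z_i$ acts on $g_{ij}$ by right multiplication (so that $g_{ij}\mid Z_j$ is Haar via right-invariance) is a detail the paper suppresses with ``the same argument,'' but the logic is identical.
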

\begin{proof}
Let $f_1$ and $f_2$ be two functions. 
We have 
$$
\Exp{f_1(g_{ij})f_2(g_{ik})}=\Exp{\Exp{f_1(g_{ij})f_2(g_{ik})|Z_i}}\;.
$$
Now, it is clear that $g_{ij}|Z_i$ is a function of $Z_j$ only. Similarly, $g_{ik}|Z_i$ is a function of $Z_k$ only. So $g_{ij}|Z_i$ is independent of $g_{ik}|Z_i$. Therefore, 
$$
\Exp{f_1(g_{ij})f_2(g_{ik})|Z_i}=\Exp{f_1(g_{ij})|Z_i}\Exp{f_2(g_{ik})|Z_i}\;.
$$
Now recall that we have shown that $g_{ij}|Z_i \sim U$, where $U$ is a uniformly distributed random variable on $SO(2)$; the same result applies to $g_{ik}$. Therefore, 
$$
\Exp{f_1(g_{ij})|Z_i}\Exp{f_2(g_{ik})|Z_i}=\Exp{f_1(U)}\Exp{f_2(U)}\;.
$$
Of course, our argument above shows that $\Exp{f_1(g_{ij})}=\Exp{f_1(U)}$. 
We conclude that 
\begin{align*}
\Exp{f_1(g_{ij})f_2(g_{ik})}&=\Exp{f_1(U)}\Exp{f_2(U)}\;,\\
&=\Exp{f_1(g_{ij})}\Exp{f_2(g_{ik})}\;.
\end{align*}
This shows that $g_{ij}$ and $g_{ik}$ are independent. The proof of joint independence of $\{g_{ij}\}_{j=1}^n$ follows exactly in the same manner: just start the proof with $f_1,\ldots,f_n$ and apply the same reasoning.

Our statement concerning $g_{ij}$ and $g_{jk}$ is also proven in a similar manner, by writing 
$$
\Exp{f_1(g_{ij})f_2(g_{jk})}=\Exp{\Exp{f_1(g_{ij})f_2(g_{jk})|Z_j}}\;,
$$
and using the fact that $g_{ij}$ and $g_{jk}$ are independent conditionally on $Z_j$. The rest of the argument is similar to the one we gave above. 
\end{proof}

So we have established some pairwise independence results, but we do not have joint independence for the three random variables $(g_{ij},g_{ik},g_{jk})$ or the random variables $\left(g_{ij}\right)_{i< j}$.

\section*{Acknowledgment}
Noureddine El Karoui gratefully acknowledges the support from NSF grant DMS-0847647 (CAREER). Hau-Tieng Wu gratefully acknowledges the support from AFOSR grant FA9550-09-1-0643. The authors also like to thank anonymous referees for their constructive comments that led to a substantial improvement of the paper. 

\bibliographystyle{plain}
\bibliography{noisyManifold}

\end{document}